\newtheorem{example}{Example}
\newtheorem{defi}{Definition}
\newtheorem{lemma}{Lemma}
\newtheorem{theorem}{Theorem}
\newtheorem{proposition}{Proposition}
\newtheorem{corollary}{Corollary}
\def\bbordermatrix#1{\begingroup \m@th
  \@tempdima 4.75\p@
  \setbox\z@\vbox{%
    \def\cr{\crcr\noalign{\kern2\p@\global\let\cr\endline}}%
    \ialign{$##$\hfil\kern2\p@\kern\@tempdima&\thinspace\hfil$##$\hfil
      &&\quad\hfil$##$\hfil\crcr
      \omit\strut\hfil\crcr\noalign{\kern-\baselineskip}%
      #1\crcr\omit\strut\cr}}%
  \setbox\tw@\vbox{\unvcopy\z@\global\setbox\@ne\lastbox}%
  \setbox\tw@\hbox{\unhbox\@ne\unskip\global\setbox\@ne\lastbox}%
  \setbox\tw@\hbox{$\kern\wd\@ne\kern-\@tempdima\left[\kern-\wd\@ne
    \global\setbox\@ne\vbox{\box\@ne\kern2\p@}%
    \vcenter{\kern-\ht\@ne\unvbox\z@\kern-\baselineskip}\,\right]$}%
  \null\;\vbox{\kern\ht\@ne\box\tw@}\endgroup}
\newcommand{\superimpose}[2]{%
  {\ooalign{$#1\@firstoftwo#2$\cr\hfil$#1\@secondoftwo#2$\hfil\cr}}}
\newcommand{\cgtop}{\mathpalette\superimpose{{\bigcirc}{t}}}
\newcommand{\cgbot}{\mathpalette\superimpose{{\bigcirc}{b}}}
\newcommand{\cgequal}{\mathpalette\superimpose{{\bigcirc}{=}}}
\newcommand{\GL}{\operatorname{GL}}
\begin{document}
\begin{frontmatter}

\title{\Large{Polynomial-time Solvable \#CSP Problems \\ via Algebraic Models and Pfaffian Circuits}}
% Identifying Polynomial-time Solvable #CSP Problems via an Algebraic Model of Pfaffian Circuits
% Algebraic Models (via Pfaffian Circuits) and 0/1 Counting Constraint Satisfaction Problems
% An Algebraic Framework (via Pfaffian Circuits) for Identifying \#CSP Problems Solvable in Polynomial-time
%  An Algebraic Model for Classifying \#CSP \\ Problems via Pfaffian Circuits
% Algebraic Models, Pfaffian Circuits and Counting Constraint Satisfaction Problems

%\thanks{This research was partly supported by .....}
\vspace{-25pt}

%\author{S. Margulies\thanksref{susan}}
\author{S. Margulies}
\address{Department of Mathematics,
United States Naval Academy,
Annapolis, MD
}
\ead{margulie@usna.edu}
%\ead[url]{URL 1}

\author{J. Morton}
\address{Department of Mathematics,
Pennsylvania State University,
State College, PA
}
\ead{morton@math.psu.edu}
%\ead[url]{URL 2}

%%%%%%%%%%%%%%%%%%%%%%%%%%%%%%%%%%%%%%%%%%%%%%%%%%
\begin{abstract}
A \emph{Pfaffian circuit} is a tensor contraction network where the edges are labeled with changes of bases in such a way that a very specific set of combinatorial properties are satisfied. By modeling the permissible changes of bases as systems of polynomial equations, and then solving via computation, we are able to identify classes of 0/1 planar \#CSP problems solvable in polynomial-time via the Pfaffian circuit evaluation theorem (a variant of L. Valiant's Holant Theorem). We present two different models of 0/1 variables, one that is possible under a homogeneous change of basis, and one that is possible under a heterogeneous change of basis only. We enumerate a series of 1,2,3, and 4-arity gates/cogates that represent constraints, and define a class of constraints that is possible under the assumption of a ``bridge" between two particular changes of bases. We discuss the issue of planarity of Pfaffian circuits, and demonstrate possible directions in algebraic computation for designing a Pfaffian 
tensor contraction network fragment that can simulate a \textsc{swap} gate/cogate. We conclude by developing the notion of a \emph{decomposable} gate/cogate, and discuss the computational benefits of this definition.
\end{abstract}

\begin{keyword}
dichotomy theorems  \sep Gr\"obner bases  \sep computer algebra  \sep \#CSP, polynomial ideals
\end{keyword}

\end{frontmatter}
%%%%%%%%%%%%%%%%%%%%%%%%%%%%%%%%%%%%%%%%%%%%%%%%%%

%%%%%%%%%%%%%%%%%%%%%%%%%%%%%%%%%%%%%%%%%%%%%%%%%% 
\section{Introduction} \label{sec_intro} 

A solution to a \emph{constraint satisfaction problem} (CSP) is an assignment of values to a set of variables such that certain constraints on the combinations of values are satisfied. A solution to a \emph{counting constraint satisfaction problem} (\#CSP) is the number of solutions to a given CSP. For example, the classic NP-complete problem 3-SAT is a CSP problem, but counting the number of satisfying assignments is a \#CSP problem. CSP and \#CSP problems are ubiquitous in computer science, optimization and mathematics: they can model problems in fields as diverse as Boolean logic, graph theory, database query evaluation, type inference, scheduling and artificial intelligence. This paper uses computational commutative algebra to study the classification of \#CSP problems solvable in polynomial-time by \emph{Pfaffian circuits}.

%L. Valiant's seminal paper on holographic algorithms \cite{valiant_ha} marked a turning point in the field of \#CSP investigations. 
In a seminal paper \cite{valiant_ha}, Valiant used ``matchgates" to demonstrate polynomial-time algorithms for a number of \#CSP problems, where none had been previously known before. 
Pfaffian circuits \cite{ lands_hol_wo_mg, morton_pfaff, morton2013generalized} are a simplified and extensible reformulation of Valiant's notion of a holographic algorithm, which builds on J.Y. Cai and V. Choudhary's work in expressing holographic algorithms in terms of tensor contraction networks \cite{cai_mg_ten}. Valiant's Holant Theorem (\cite{valiant_qc, valiant_ha}) is an equation where the left-hand side has both an exponential number of terms and an exponential number of term cancellations, whereas the right-hand side is computable in polynomial-time.  Extensions to holographic algorithms made possible by Pfaffian circuits include swapping out this equation for another combinatorial identity \cite{morton2013generalized}, viewing this equation as an equivalence of monoidal categories \cite{morton2013generalized}, or, as is done here, using heterogeneous changes of bases with the aid of computational commutative algebra.

In a series of  papers (\cite{bulatov_dalmau_maltsev, bulatov_dalmau_towdic, bulatov_grohe}) culminating in \cite{bulatov_dic,bulatov2013complexity}, A. Bulatov explored the problem of counting the number of homomorphisms between two finite relational structures (an equivalent statement of the general \#CSP problem). In \cite{bulatov_dic,bulatov2013complexity}, Bulatov demonstrates a complete dichotomy theorem for \#CSP problems. In other words, Bulatov demonstrates that a \#CSP problem is either in FP (solvable in polynomial time), or it is \#P-complete. However, not only does his paper rely on an thorough knowledge of universal algebras, but it relies on the notion of a \emph{congruence singular} finite relational structure, and the complexity of determining if a given structure has this property is unknown (perhaps even undecidable). However, in 2010, Dyer and Richerby \cite{dyer2010effective} offered a simplified proof of Bulatov's result, and furthermore established a new criterion (known as \emph{
strong balance}) for the \#CSP dichotomy that is not only decidable, but is in fact in NP.
% We made heavy use of this criterion: with it, we could quickly identify and discard polynomial-time-solvable problems output by the Gr\''obner bases computations as potentially new (under planarity assumptions), but already known to be polynomial-time in the general case by the Bulatov result and Dyer-Richerby criterion.

In light of these elegant and conclusive dichotomy theorems, research on \#CSP problems focused in a different direction. The dichotomy theorems  were  specialized to categorize both 0/1 and finite alphabets  \cite{bulatov_fa, cai2012complexity}, and also specialized for restricted input cases such as planar instances, symmetric signatures, or homogeneous change of basis \cite{cai_lu_xia:dichotomy}. This paper begins the process of developing a dichotomy theorem for  Pfaffian circuits under a non-homogeneous (heterogeneous) change of basis by identifying classes of symmetric and asymmetric planar 0/1 polynomial-time solvable instances via algebraic methods and computation.  It is the first systematic exploration of the heterogeneous basis case.

For clarity, we list four main reasons for this approach. First, since the time complexity of determining the Pfaffian of a matrix is equivalent to that of finding the determinant, our approach is not only polynomial-time, but $O(n^{\omega_p})$ (where $1.19 \leq \omega_p \leq 3$ and $n$ is the total number of variable inclusions in the clauses). We observe that Valiant's matchgate approach has a similar time complexity; however, the tensor contraction network representation of the underlying \#CSP instance is often a more compact representation than that of the matchgate encoding. Second, our approach is based on algebraic computational methods, and thus, any independent innovations to Gr\"obner basis algorithms (or determinant algorithms, for that matter) will make it easier to classify problems. Third, by approaching Pfaffian circuits via algebraic computation, we are able to consider non-homogeneous (heterogeneous) changes of bases, an option which has not yet been explored. The use of a heterogeneous 
changes of bases brings us to our final reason for considering Pfaffian circuits: the gates/cogates may be decomposable into smaller gates/cogates in such a way that lower degree polynomials are used, which exponentially enhances the performance of this method.

We begin Sec. \ref{sec_back} with a detailed introduction to tensor contraction networks, which can be skipped by those already familiar with these ideas. We next recall the definition of Pfaffian gates/cogates, and their connection to classical logic gates (OR, NAND, etc.).  In Sec. \ref{sec_back_pfaff_eval}, we define Pfaffian circuits, and give a step-by-step example of evaluating a Pfaffian circuit via the polynomial-time Pfaffian circuit evaluation theorem \cite{morton_pfaff}. In Sec. \ref{sec_alg_meth}, we present the new algebraic and computational aspect of this project: given a gate/cogate, we describe a system of polynomial equations such that  the solutions (if any) are in bijection to the changes of bases (not necessarily homogeneous) where the gate/cogate is ``Pfaffian". By ``linking" the ideals associated with these gates/cogates together, and then solving using software such as \textsc{Singular} \cite{singular}, we begin the process of characterizing the building blocks of Pfaffian circuits.

In Sec. \ref{sec_bool_ten}, we present the first results of our computational exploration. We demonstrate two different ways of simulating 0/1 variables as planar, Pfaffian, tensor contraction network fragments. The first uses a homogeneous change of a basis, and the second (known as a Boolean tree) is possible under a heterogeneous change of basis only. We use the Boolean trees to develop two classes of compatible constraints, the first of which identifies gates/cogates that are Pfaffian under a homogeneous change of basis, and the second of which utilizes the two existing changes of bases ($A$ and $B$), and then posits the existence of a third change of basis $C$, to identify 24 additional Pfaffian gates/cogates. Thus, Sec. \ref{sec_bool_ten} begins the process of characterizing (via algebraic computation) planar, Pfaffian, 0/1 \#CSP problems that are solvable in polynomial-time.

In Sec. \ref{sec_swap}, we investigate the question of planarity. Within the Pfaffian circuit framework, the addition of a \textsc{swap} gate/cogate is \emph{not} equivalent to lifting the planarity restriction, since the compatible gates/cogates representing solvable \#CSP problems are not automatically identifiable. Thus, there are no inherent complexity-theoretic stumbling blocks to investigating a Pfaffian \textsc{swap} gate/cogate, and indeed the hope is that such an investigation would eventually yield a new sub-class of non-planar poloynomial-time solvable \#CSP problems. However, in this paper, we only demonstrate that specific gates which can be used as building blocks for a \textsc{swap} gate (such as \textsc{CNOT}) are indeed Pfaffian (under a heterogeneous change of basis only). We then describe several attempts to construct a Pfaffian \textsc{swap} gate, indicating precisely where the attempts fail, and conclude by presenting a \emph{partial} \textsc{swap} gate. This result suggests a specific 
direction (in both algebraic computation and combinatorial structure), for constructing a Pfaffian \textsc{swap} gate. We conclude in Sec. \ref{sec_decomp} by introducing the notion of a \emph{decomposable} gate/cogate, and discussing the computational advantages of gate decompositions.

To summarize, this project models Pfaffian circuits as systems of polynomial equations, and then solves the systems using \textsc{Singular} \cite{singular}, with the goal of identifying classes of planar 0/1 \#CSP problems that are solvable in polynomial-time. % via the Pfaffian circuit evaluation theorem
%The main contribution of this paper is to investigate, via symbolic computation, the reach of Pfaffian circuits in terms of solving planar, Boolean counting constraint satisfaction problems in polynomial-time using heterogeneous change of basis.  In particular, we describe systems of polynomial equations whose solutions correspond to classes of polynomial-time solvable \#CSP problems and give a number of modifications and reductions that allow us to solve these systems in some cases.

%Pfaffian circuits, or planar bipartite tensor contraction networks with an extra algebraic condition enforced on the tensors (detailed in Sec. \ref{sec_back}),  are used to solve certain counting problems.  In general such problems, such as finding the number of 3-colorings on a graph, or the number of Hamiltonian paths, are $\mathsf{\#P}$-complete or $\mathsf{\#P}$-hard. Intuitively, $\mathsf{\#P}$-complete problems are often thought of as ``counting the number of solutions to a $\mathsf{NP}$-complete problem.''  However, in some cases, such as perfect matchings, determining the existence of a perfect matching in a graph is known to be in P, whereas counting the number of perfect matchings in a graph is \#P-complete \cite{valiant_perm}. 
%

%%%%%%%%%%%%%%%%%%%%%%%%%%%%%%%%%%%%%%%%%%%%%%%%%%
\section{Background and Definitions of Pfaffian Circuits} \label{sec_back} In this section, we develop the necessary background for modeling \#CSP problems as Pfaffian circuits, and then solving them via the Pfaffian circuit evaluation theorem \cite{morton_pfaff}. We begin with tensors and the convenience of the Dirac (bra/ket) notation from quantum mechanics, and then express Boolean predicates (which we call gates/cogates) as elements of a tensor product space. We then describe the process of applying a change of basis to these gates/cogates such that they are expressible as tensors with coefficients that are the sub-Pfaffians of some skew-symmetric matrix. We conclude with a step-by-step example of solving a particular Pfaffian circuit with the Pfaffian circuit evaluation theorem.

%%%%%%%%%%%%%%%%%%%%%%%%%%%%%%%%%%%%%%%%%%%%%%%%%%
\subsection{Tensors and Dirac Notation} \label{sec_back_ten} 
Let $U$ and $V$ be two-dimensional complex vector spaces equipped with bases, with  $v \in V$ and $u \in U$. In the induced basis we can express the tensor product $v \otimes u$ as the Kronecker product, the vector $w \in \mathbb{C}^{4}$ ($w =[w_0,w_1,w_2,w_3]^T$) with $w_{2i + j} = v_iu_j$ for $0\leq i,j\leq 1$. For example,
{\footnotesize{
\[\arraycolsep=1.4pt\def\arraystretch{1.0}
\left[\begin{array}{c}
3\\
1/3 + i
\end{array}\right] \otimes \left[\begin{array}{c}
1 + 2i\\
1/2
\end{array}\right] = \left[\begin{array}{c}
\hspace{13pt}3\hspace{15pt}\left[\begin{array}{c}
1 + 2i\\
1/2
\end{array}\right]\\
(1/3 + i)\left[\begin{array}{c}
1 + 2i\\
1/2
\end{array}\right]\\
\end{array}\right] = \left[\begin{array}{c}
3+6i\\
3/2\\
-5/3 + 5/3i\\
1/6 + i/2
\end{array}\right]~.
\]}}
Now let $V_1,\ldots, V_n$ each be isomorphic to $\mathbb{C}^{2}$, and let $\{v^0_i, v^1_i\}$ be a basis of $V_i$. Any induced basis vector in the tensor product space $V_1 \otimes V_2 \otimes \cdots \otimes V_n$ can be concisely written as a \emph{ket}, where $v^0$ is denoted by $|0\rangle$ and $v^1$ is denoted by $|1\rangle$. For example, a basis element of  $V_1 \otimes V_2 \otimes \cdots \otimes V_n$ can be written as
\begin{align*}
\underbrace{|1101\cdots 01\rangle}_{\text{ket}}~ &= v^{\mathbf{1}}_1 \otimes v^{\mathbf{1}}_2 \otimes v^{\mathbf{0}}_3 \otimes v^{\mathbf{1}}_4 \otimes \cdots \otimes  v^{\mathbf{0}}_{n-1} \otimes v^{\mathbf{1}}_n~,
\end{align*}
and an arbitrary vector $w \in V_1 \otimes V_2 \otimes \cdots \otimes V_n$ can be written as
\begin{align*}
w &= \alpha_1\underbrace{|00\cdots 0\rangle}_{\text{ket}} + \alpha_2\underbrace{|10\cdots 0\rangle}_{\text{ket}} + \cdots + \alpha_{2^n}\underbrace{|11\cdots 1\rangle}_{\text{ket}}~, \quad \text{with~} \alpha_1,\cdots,\alpha_{2^n} \in \mathbb{C}~.
\end{align*}

%This is not needed for JSC
%To define a \emph{dual} vector space, recall that a function $f : \mathbb{C}^2 \rightarrow \mathbb{C}$ is \emph{linear} if it satisfies the \emph{linearity condition}
%\begin{align*}
%f\big(c_1|x\rangle + c_2 |y\rangle\big) = c_1f\big(|x\rangle\big) +c_2f\big(|y\rangle\big)~, \quad \quad \forall \phantom{|}|x\rangle, |y\rangle \in \mathbb{C}^2, \forall c_1,c_2 \in \mathbb{C}~.
%\end{align*}

Given a vector space $V$, the \emph{dual vector space} $V^{*}$ is the vector space of all linear functions on $V$. Given  $V_1,\ldots, V_n$, let $V^{*}_1, \ldots, V^{*}_n$ be their duals, with $\{\nu^{0*}_i, \nu^{1*}_i\}$ the dual basis of $V^{*}_i$. We write a linear function in the tensor product space $V^{*}_1 \otimes \cdots \otimes V^{*}_n$ as a \emph{bra}, with $\nu^{0*}$ denoted by $\langle 0|$ and $\nu^{1*}$ denoted by $\langle 1|$. For example, a basis element can be written as
\begin{align*}
\underbrace{\langle 1101\cdots 01|}_{\text{bra}} &= \nu^{\mathbf{1}*}_1 \otimes \nu^{\mathbf{1}*}_2 \otimes \nu^{\mathbf{0}*}_3 \otimes \nu^{\mathbf{1}*}_4 \otimes \cdots \otimes  \nu^{\mathbf{0}*}_{n-1} \otimes \nu^{\mathbf{1}*}_n~,
\end{align*}
and an arbitrary linear function $w \in V^{*}_1 \otimes V^{*}_2 \otimes \cdots \otimes V^{*}_n$ can be written as
\begin{align*}
w &= \beta_1\underbrace{\langle 00\cdots 0|}_{\text{bra}} + \beta_2\underbrace{\langle 10\cdots 0|}_{\text{bra}} + \cdots + \beta_{2^n}\underbrace{\langle 11\cdots 1|}_{\text{bra}}~, \quad \text{with~} \beta_1,\cdots,\beta_{2^n} \in \mathbb{C}~.
\end{align*}
We may use subscripts to identify sub-tensor-products: for example, $|0_61_90_{11} \rangle$ denotes the induced basis element $v^0_6 \otimes v^1_9 \otimes v^0_{11}$ in the tensor product space $V_6 \otimes V_9 \otimes V_{11}$ (and similarly for the bras). 

%By the definition of a basis of $V^{*}_i$, t
The dual basis element $\nu^{j*}_i$ with $j \in \{0,1\}$ yields a Kronecker delta function:
\[
\nu^{j*}_i(v^k_i) = \begin{cases}
1 & \text{if $j=k$~,}\\
0 & \text{otherwise~.}
\end{cases}
\]
In the bra-ket notation, this is expressed as a \emph{contraction}; e.g.\  $\langle 0 | 0 \rangle = 1$ and $\langle 0 | 1 \rangle = 0$. 
%I don't hink we use this, and lets avoid talking about what the result is.
%In the abbreviated bra-ket notation, $\langle 0_11_30_5|0_11_30_5 \rangle = 1$, and $\langle 0_21_30_5|1_31_40_5 \rangle = \langle 0_2|1_4 \rangle$. 
%Not for JSC:
%Further reading on dual vector spaces can be found in \cite{hk_lin_alg}, Sec. 3.5, and \cite{qc_and_qi}. 

In classical computing, a bit takes on the value of 0 or 1. In quantum computing, given an orthonormal basis $\big\{|0\rangle, |1\rangle\big\}$, a pure state of a qubit is given by the superposition of states, denoted $\alpha |0\rangle + \beta |1\rangle$, with $\alpha^2+ \beta^2=1$ and $\alpha, \beta \in \mathbb{C}$. %We adopt the Dirac or bra/ket notation both for convenience, and to explicitly express the connection between quantum computing and Boolean logic.

%%%%%%%%%%%%%%%%%%%%%%%%%%%%%%%%%%%%%%%%%%%%%%%%%%
\subsection{Boolean Predicates as Tensor Products} \label{sec_back_gcg} 

 A \emph{Boolean predicate} is a 0/1-valued function where the true/false output is dependent on the true/false input assignments of the variables. 
Here we see the 2-input Boolean predicate OR represented as both a bra and a ket.
%Just as classical circuits are composed of logic gates, such as AND, OR and NOT, qubit-based quantum circuits (possibly including preparations and measurements) are composed of Boolean predicates lifted to the multilinear setting (and written in bras and kets) which we will call \emph{gates} and \emph{cogates}, as well as tensors which have no Boolean equivalent. 

\begin{minipage}{.25\linewidth}
{\footnotesize
\begin{center}
\text{OR} = \begin{tabular}{c|c||c}
%\multicolumn{3}{c}{$\textsc{OR}$}\\ \hline \hline
0 & 0 & 0\\ [-1.5ex]
1 & 0 & 1\\ [-1.5ex]
0 & 1 & 1\\ [-1.5ex]
1 & 1 & 1
\end{tabular}
\end{center}}
\end{minipage}
\begin{minipage}{.75\linewidth}
\vspace{-10pt}
\begin{align*}
\text{OR (as a ket)} &= 0\cdot |00\rangle + 1\cdot|10\rangle + 1\cdot|01\rangle + 1\cdot|11\rangle~\\
 &= | 10\rangle +  | 01\rangle +  | 11\rangle~, \quad \text{and}\\
\text{OR (as a bra)}&= \langle 10| +  \langle 01| +  \langle 11|~.
\end{align*}
\end{minipage}

\noindent A Boolean predicate represented as a ket is a \emph{gate}, and a Boolean predicate represented as a bra is a \emph{cogate}. Just as Boolean predicates  can be connected to describe a (counting) constraint satisfaction problems, these gates and cogates can be connected to describe a {\em tensor contraction network}.

%%%%%%%%%%%%%%%%%%%%%%%%%%%%%%%%%%%%%%%%%%%%%%%%%%
\subsection{Tensor Contraction Networks} \label{sec_back_tcn} 
%A tensor contraction network can be concisely described as a string diagram in the monoidal category of finite-dimensional vector spaces (possibly with additional structure); here we give a constructive, combinatorial description of these objects. 
A bipartite graph $G = \{X,Y,E\}$ is a graph partitioned into two disjoint vertex sets, $X$ and $Y$, such that every edge in the graph is incident on a vertex in both $X$ and $Y$. A \emph{bipartite tensor contraction network} $\Gamma$ is a bipartite graph partitioned into gates and cogates. If $\Gamma$ contains $m$ edges, we consider the vector spaces $V_1,\ldots, V_m$ (and vector space duals $V^{*}_1,\ldots, V^{*}_m$) , and every vertex in $\Gamma$ is labeled with either a gate or cogate. Consider a vertex of degree $d$ which is incident on edges $e_1,\ldots,e_d$. Then, the gate (or cogate) associated with that vertex is an element of the tensor product space $V_{e_1}\otimes \cdots \otimes V_{e_d}$ (or $V^{*}_{e_1}\otimes \cdots \otimes V^{*}_{e_d}$, respectively). We denote the tensor contraction network $\Gamma$ as the 3-tuple $\{G, C, E\}$ consisting of gates, cogates and edges. Every tensor contraction network considered in this paper is bipartite.
%Note that this definition is so far independent of the embedding of the bipartite graph. 
\begin{example} \label{ex_tcn}
\emph{Consider the following tensor contraction network $\Gamma$ with arbitrary gates/cogates:}

\vspace{-8pt}
\begin{minipage}{.20\linewidth}
\begin{center}
% trim=l b r t
\includegraphics[scale=.25,clip=true,trim=75 500 220 50]{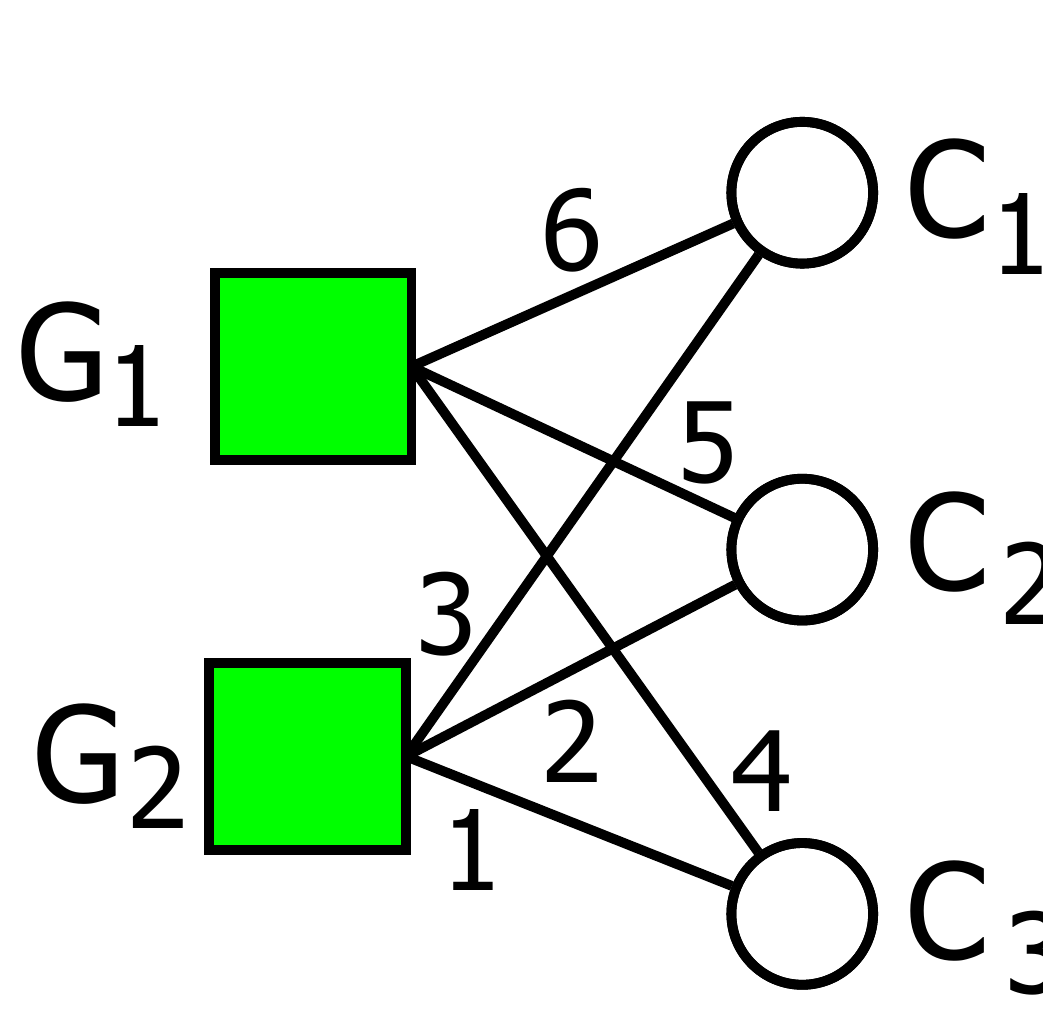}
\end{center}
\end{minipage}
\begin{minipage}{.55\linewidth}
\begin{align*}
 \quad \Gamma &= \begin{cases}
G = \begin{cases}
G_1 = |0_40_50_6 \rangle + |1_41_51_6 \rangle~,\\
 G_2 = |1_10_21_3\rangle + |0_11_21_3\rangle + |1_11_21_3\rangle~,
\end{cases}\\
C = \begin{cases}
C_1 = \langle 0_31_6| + \langle 1_30_6|~,\\
C_2 = \langle 0_21_5| + \langle 1_20_5|~,\\
C_3 = \langle 1_11_4|~,
\end{cases}\\
E = \{1,\ldots, 6\}~.
\end{cases}
\end{align*}
\end{minipage}
%\hspace{-25pt}

\emph{
\noindent Gates are denoted by boxes ($\Box$) and cogates by circles ($\circ$). Note that gate $G_2$ is incident on edges $1,2$ and $3$, and is an element of the tensor product space $V_{1}\otimes V_2\otimes V_{3}$. Similarly, cogate $C_3$ is incident on edges $1$ and $4$, and $C_3 \in V^{*}_1\otimes V^{*}_4$. \hfill $\Box$}
\end{example}

Given a tensor contraction network $\Gamma$, the \emph{value of the tensor contraction network}, denoted $val(\Gamma)$, is the contraction of all the tensors in $\Gamma$. The value of any \emph{bipartite} tensor contraction network is a scalar.  We observe that the value of the tensor contraction network $\Gamma$ in Ex. \ref{ex_tcn} is $0$ (see Sec \ref{sec_back_pfaff_eval} for a contraction example).

%%%%%%%%%%%%%%%%%%%%%%%%%%%%%%%%%%%%%%%%%%%%%%%%%%
\subsection{Pfaffian Gates, Cogates and Circuits} \label{sec_back_pfaff} 
In the previous section, we defined gates/cogates as the fundamental building blocks of tensor contraction networks. In this section, we describe how to find the value of a tensor contraction network in polynomial-time when the network satisfies certain combinatorial and algebraic conditions. In particular, we describe what it means for an individual gate/cogate to be \emph{Pfaffian}.

%Given an $n \times n$ matrix $A$, the well-known formula for the determinant of $A$ is given by
%\begin{align*}
%\det(A) &= \sum_{\sigma \in S_n} \text{sgn}(\sigma) \prod_{i =1}^n a_{i,\sigma(i)}~.
%\end{align*}
An $n \times n$ \emph{skew-symmetric} matrix $A$ has $a_{ij} = -a_{ji}$, and thus $a_{ii} = 0$.  
%Additionally, $\det(A) = \det(A^T) = \det(-A) = (-1)^n \det(A)$.  
%In particular, f
For $n$ odd, the determinant of a skew-symmetrix matrix $A$ is zero, and for $n$ even, $\det(A)$ can be written as the square of a polynomial known as the \emph{Pfaffian} of $A$. Given an even integer $n$, let $S^{\text{Pf}}_n \subseteq S_n$ be the set of permutations $\sigma \in S_n$ such that $\sigma(1) < \sigma(2), \sigma(3) < \sigma(4), \ldots, \sigma({n-1}) < \sigma(n)$, and $\sigma(1) < \sigma(3) < \sigma(5) < \cdots < \sigma({n-1})$. Then, the \emph{Pfaffian} of an $n \times n$ skew-symmetric matrix $A$, denoted $\text{Pf}(A)$, is given by
\vspace{-10pt}
\begin{displaymath}
\text{Pf}(A) = \begin{cases}
0~, & \text{for $n$ odd}~,\\
1~, & \text{for $n = 0$}~,
\end{cases}~,  \quad \text{and} \quad 
\text{Pf}(A) = \overbrace{\sum\limits_{\sigma \in S^{\text{Pf}}_n} \text{sgn}(\sigma)a_{\sigma(1),\sigma(2)}a_{\sigma(3),\sigma(4)}\cdots a_{\sigma({n-1}),\sigma(n)}}^{\text{for $n$ even}}~,
\end{displaymath}
and $\big(\text{Pf}(A)\big)^2 = \det(A)$. For example, if $A$ is a $2 \times 2$ matrix, then $\text{Pf}(A) = a_{12}$. If $A$ is a $4 \times 4$ matrix, then $\text{Pf}(A) = a_{12}a_{34} - a_{13}a_{24} + a_{14}a_{23}$. 
Laplace expansion can also be used to compute $\text{Pf}(A)$. 
For example, if $A$ is a $6 \times 6$ matrix, then $\text{Pf}(A) =a_{12}\text{Pf}(A|_{3456}) - a_{13}\text{Pf}(A|_{2456}) + a_{14}\text{Pf}(A|_{2356}) - a_{15}\text{Pf}(A|_{2346}) + a_{16}\text{Pf}(A|_{2345})$, where $A|_{2356}$ is the submatrix of $A$ consisting only of the rows and columns indexed by $\{2,3,5,6\}$.

Let $[n] = \{1,\ldots,n\}$, and $I \subseteq [n]$. Then $|I\rangle$ is the ket corresponding to subset $I$. For example, given $I = \{2,5,7\} \subseteq [8]$, then $|I\rangle = |01001010\rangle$. Additionally, given $J = \{1,2,4,6,8\} \subseteq [8]$, then $J^C$ is the set of integers \emph{not} present in $J$, or $J^C = \{3,5,7\}$.
\begin{defi}\cite{morton_pfaff,lands_hol_wo_mg} \label{def_sPf}
Given an $n \times n$ skew-symmetric matrix $\Xi$, we define the \emph{subPfaffian} of $\Xi$ and the \emph{subPfaffian dual} of $\Xi$, denoted by $\text{\emph{sPf}}(\Xi)$  and $\text{\emph{sPf}}^{*}(\Xi)$ respectively, as
\begin{align*}
\text{\emph{sPf}}(\Xi) &= \sum_{I \subseteq [n]}\text{\emph{Pf}}(\Xi|_{I}) |I\rangle~, \quad \quad
\text{\emph{sPf}}^{*}(\Xi) = \sum_{J \subseteq [n]}\text{\emph{Pf}}(\Xi|_{J^C}) \langle J|~,
\end{align*}
where $\Xi|_{I}$ is the submatrix of $\Xi$ with rows/columns indexed by $I$ (and similarly for $\Xi|_{J^C}$).
\end{defi}
\begin{example} \label{ex_subpfaff} \emph{Given the $4 \times 4$ skew-symmetric matrix $\Xi$, we calculate $\text{\emph{sPf}}(\Xi), \text{\emph{sPf}}^{*}(\Xi)$:}

%\vspace{-10pt}
\hspace{-30pt}\begin{minipage}{.30\linewidth}
\vspace{-21pt}
\[\arraycolsep=2.4pt\def\arraystretch{1.0}
\Xi = \left[ \begin{array}{cccc}
0 & i & 0 & 2\\
-i & 0 & -1 & 0\\
0 & 1 & 0 & 3\\
-2 & 0 & -3 & 0
\end{array} \right],
\]
\end{minipage}
\begin{minipage}{.20\linewidth}
{\footnotesize{
\begin{align*}
\text{\emph{sPf}}(\Xi) &= \text{\emph{Pf}}(\Xi|_{\emptyset})|0000\rangle + \text{\emph{Pf}}(\Xi|_{1})|1000\rangle+ \cdots +  \text{\emph{Pf}}(\Xi|_{1234})|1111\rangle~\\
&= |0000\rangle + i|1100\rangle + 2|1001\rangle - |0110\rangle + 3|0011\rangle + (-2 + 3i)|1111\rangle~,\\
\text{\emph{sPf}}^{*}(\Xi) &=\text{\emph{Pf}}(\Xi|_{1234})\langle 0000|+ \text{\emph{Pf}}(\Xi|_{234})\langle1000| + \cdots +  \text{\emph{Pf}}(\Xi|_{\emptyset})\langle 1111|~\\
&=  (-2 + 3i)\langle 0000| + 3\langle 1100| - \langle 1001| +2 \langle 0110| + i \langle 0011| + \langle 1111|~.
\end{align*}}} \normalsize
\end{minipage}

%\vspace{-15pt}
\emph{We observe that, while $\text{\emph{Pf}}(\Xi) = -2 + 3i$ is a scalar, $\text{\emph{sPf}}(\Xi)$ is an element of the tensor product space $V_1 \otimes \cdots \otimes V_4$ and $\text{\emph{sPf}}^{*}(\Xi)$ is an element of $V^{*}_1 \otimes \cdots \otimes V^{*}_4$.} \hfill $\Box$
\end{example}

The value of a closed tensor network is invariant under the action of $\times_i \GL V_i$, with each $\GL V_i $ acting on the corresponding wire.  To see this explicitly, we now apply a change of basis $A$ to an edge in a tensor contraction network.

\begin{minipage}{.10\linewidth}
\begin{center}
Consider
\end{center}
\end{minipage}
\begin{minipage}{.25\linewidth}
\begin{center}
% trim=l b r t
\includegraphics[scale=.20,clip=true,trim=85 625 90 90]{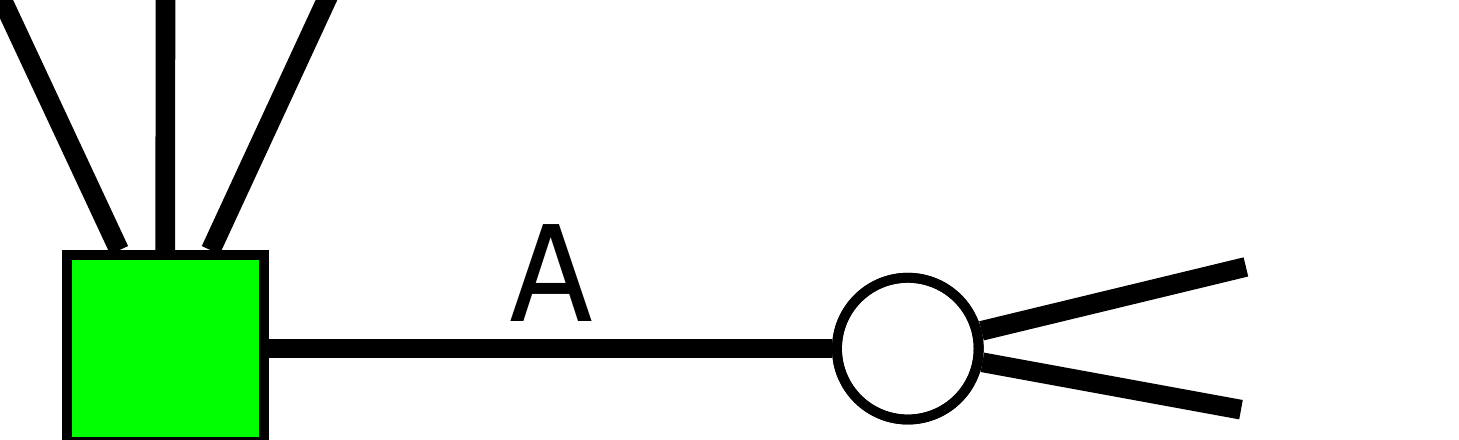}
\end{center}
\end{minipage}
\hspace{-15pt}\begin{minipage}{.25\linewidth}
\begin{align*}
\text{, where } A = \left[ \begin{array}{cc}
a_{00} & a_{01}\\
a_{10} & a_{11}
\end{array} 
\right], \text{ and } A^{-1} = \frac{1}{\det(A)} \left[ \begin{array}{cc}
a_{11} & -a_{01}\\
-a_{10} & a_{00}
\end{array} 
\right]~.
\end{align*}
\end{minipage}

The choice of indexing the rows and columns from zero is a notational convenience which will become clear in Sec. \ref{sec_alg_meth}. When the change of basis $A$ is applied to an edge, the contraction property $\langle 0|1\rangle = \langle 1|0\rangle = 0$ and $\langle 0|0\rangle = \langle 1|1\rangle = 1$ must be preserved, since applying a change of basis must not affect $val(\Gamma)$. Since every edge in a bipartite tensor contraction network is incident on both a gate and cogate, when we apply the change of basis to the gate, we find $A |0\rangle = a_{00}|0\rangle + a_{10}|1\rangle$, and $A|1\rangle = a_{01}|0\rangle + a_{11}|1\rangle$. When we apply the change of basis to the cogate, we find $A^{-1}\langle 0| = \det(A)^{-1}\big(a_{11} \langle 0| - a_{01}\langle 1|\big)$, and $A^{-1}\langle 1| = \det(A)^{-1}\big(-a_{10} \langle 0| + a_{00}\langle 1|\big)$.  Note that
\begin{align*}
1 = \langle 0 | 0 \rangle &= \Big\langle A^{-1}\langle 0|,~A|0\rangle \Big\rangle = \Big\langle \det(A)^{-1}\big(a_{11} \langle 0| - a_{01}\langle 1|\big),~a_{00}|0\rangle 
+ a_{10}|1\rangle \Big\rangle~\\
&=\frac{a_{11} a_{00}\langle 0|0\rangle + a_{11}a_{10} \langle 0|1\rangle - a_{01}a_{00}\langle 1|0\rangle  - a_{01}a_{10}\langle 1|1\rangle~}{\det(A)}
= \frac{a_{11} a_{00} - a_{01}a_{10}}{\det(A)} = 1~.
\end{align*}
Therefore, when the change of basis $A$ is applied to the gate, the change of basis $A^{-1}$ is applied to the cogate. For convenience, this is denoted as follows: 
%  we simply denote this by applying $A$ to the edge, as seen below

\begin{minipage}{.10\linewidth}
\begin{center}
\phantom{xx}
\end{center}
\end{minipage}
\begin{minipage}{.30\linewidth}
\begin{center}
% trim=l b r t
\includegraphics[scale=.20,clip=true,trim=85 600 80 90]{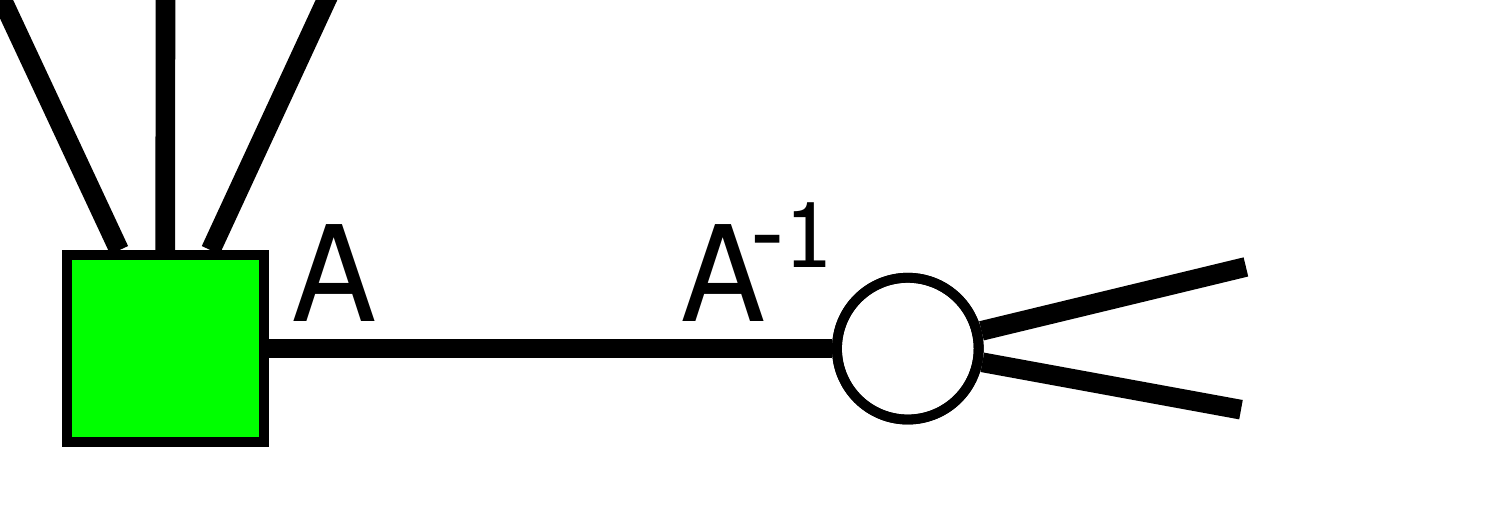}
\end{center}
\end{minipage}
\hspace{-13pt}\begin{minipage}{.20\linewidth}
\begin{center}
is denoted as
\end{center}
\end{minipage}
\begin{minipage}{.30\linewidth}
\begin{center}
% trim=l b r t
\includegraphics[scale=.20,clip=true,trim=85 600 90 90]{gcg_cob_A.pdf}
\end{center}
\end{minipage}

We now present the notion of a \emph{Pfaffian} gate/cogate. Recall that in a tensor contraction network, the number of edges incident on the gate/cogate is the \emph{arity} of the gate/cogate. For example, in Ex. \ref{ex_tcn}, cogate $C_2 = \langle 0_21_5| + \langle 1_20_5|$ is incident on edges 2 and 5, and thus has \emph{arity} two.

\begin{defi}\label{def_pfaff_gcg}
An arity-$n$ gate $G$ is \emph{Pfaffian after a change of basis} if there exists an $n \times n$ skew-symmetric matrix $\Xi$, an $\alpha \in \mathbb{C}$, and matrices $A_1,\ldots,A_n \in \mathbb{C}^{2 \times 2}$ such that
%\vspace{-5pt}
\begin{align*}
(A_1 \otimes \cdots \otimes A_n) G &= \alpha\hspace{1pt}\text{sPf}(\Xi)~.
\end{align*}
An arity-$n$ cogate $C$ is \emph{Pfaffian after a change of basis} if there exists an $n \times n$ skew-symmetric matrix $\Theta$, a $\beta \in \mathbb{C}$, and matrices $A_1,\ldots,A_n \in \mathbb{C}^{2 \times 2}$ such that
\begin{align*}
(A^{-1}_1 \otimes \cdots \otimes A^{-1}_n) C &= \beta\hspace{1pt}\text{sPf}^{*}(\Theta)~.
\end{align*}
\end{defi}
When $A_1 = \cdots = A_n$, we say that $G$ is Pfaffian under a \emph{homogeneous} change of basis.  Otherwise, we say that $G$ is Pfaffian under a \emph{heterogeneous} change of basis. When no change of basis is needed, we simply say that $G$ is {\em Pfaffian} (and similarly for cogates).    
\begin{example}[Pfaffian Gates and Cogates] \label{ex_pfaff_gcg}
Consider the change of basis matrix $A$:
\[
A = \left[ \begin{array}{cc}
 \frac{1}{10}\big( -5^{3/4} + 5^{5/4} ) & 5^{-1/4}  \\
 \frac{1}{10} \big( -5^{3/4} - 5^{5/4} \big)  & 5^{-1/4}
\end{array} \right]~.
\]
Consider the OR gate $|10\rangle + |01\rangle + |11\rangle$, and observe that
\begin{align*}
(A\otimes A)\big(|10\rangle + |01\rangle + |11\rangle \big )&=|00 \rangle - |11\rangle~ =
 \alpha\hspace{2pt} \text{\emph{sPf}}(\Xi) = \text{\emph{sPf}}\left( \left[\begin{array}{cc}
0 & -1\\[-1.5ex]
1 & 0
\end{array}\right] \right)~.
\end{align*}
Additionally, consider the cogate $\langle 00| +  \langle 11|$, and observe that
\begin{align*}
\big(A^{-1} \otimes A^{-1} \big) \big ( \langle 00| +  \langle 11| \big) &= \beta\hspace{2pt}\text{\emph{sPf}}^{*}(\Theta)~ =  \underbrace{ \Big(\frac{ \sqrt{5}}{2} - \frac{1}{2} \Big)}_{\beta}\text{\emph{sPf}}^{*}
\left( \left[\begin{array}{cc}
0 &  \frac{\sqrt{5}}{2} + \frac{3}{2}\\
-\big(  \frac{\sqrt{5}}{2} + \frac{3}{2} \big) & 0
\end{array}\right] \right)~.
\end{align*}
Therefore the gate $|10\rangle + |01\rangle + |11\rangle$ and the cogate $\langle 00| +  \langle 11|$ are both Pfaffian under the homogeneous change of basis $A$. We observe that the change of basis matrix $A$ was found computationally via the algebraic method described in Sec. \ref{sec_alg_meth}. \hfill $\Box$
\end{example}

A \emph{Pfaffian circuit} \cite{morton_pfaff} is a tensor contraction network where some change of basis (possibly the identity) has been applied to every edge such that every gate/cogate in the network is Pfaffian. In the next section, we explain the importance of Pfaffian circuits.
%In the next section, we demonstrate the Pfaffian circuit evaluation theorem, i.e., we demonstrate how to compute $val(\Gamma)$ in polynomial-time when $\Gamma$ is a Pfaffian circuit.

%%%%%%%%%%%%%%%%%%%%%%%%%%%%%%%%%%%%%%%%%%%%%%%%%%
\subsection{Pfaffian Circuits and the Pfaffian Circuit Evaluation Theorem} \label{sec_back_pfaff_eval} In this section, we explain the Pfaffian circuit evaluation theorem. Just as Valiant's Holant Theorem involves an identity where the left-hand side seems to require exponential time, while  the right-hand side can be evaluated in polynomial time,  the Pfaffian circuit evaluation theorem is a similar equation. In this section, we follow a particular example, and evaluate both the left and right-hand sides of the identity, in exponential and polynomial-time, respectively.

Consider the following Boolean formula (an example of 2-SAT) in variables $x_1,x_2,x_3$:
\begin{align}
(x_1 \vee x_2) \wedge (x_2 \vee x_3) \wedge (x_1 \vee x_3)~. \label{eq_bool_form}
\end{align}
%An assignment of true/false to the variables $x_1,x_2,x_3$ is a \emph{satisfying} solution if every clause in the Boolean formula evaluates to true. The exact number of satisfying solutions is 4. Specifically,
%\vspace{-5pt}
%\begin{align*}
%\big\{ (\underbrace{true}_{x_1}, \underbrace{false}_{x_2},\underbrace{true}_{x_3}), (true, true, false), (true, true, true), (false, true, true) \big\}~.
%\end{align*}
% out_tcn_2sat_sol.sig  tcn_2sat_sol.sig
%/home/faculty/margulies/maple_code/Pfaffian/sig_files1/tcn_ex/ex7
%\vspace{-25pt}

\noindent The tensor contraction network $\Gamma$ displayed in Fig. \ref{fig_tcn_ex2} (top) is a model for the above Boolean formula.
\begin{figure}[h!]
%\vspace{-1pt}
\begin{minipage}{.25\linewidth}
\begin{center}
% trim=l b r t
\includegraphics[scale=.25,trim=35 360 0 92]{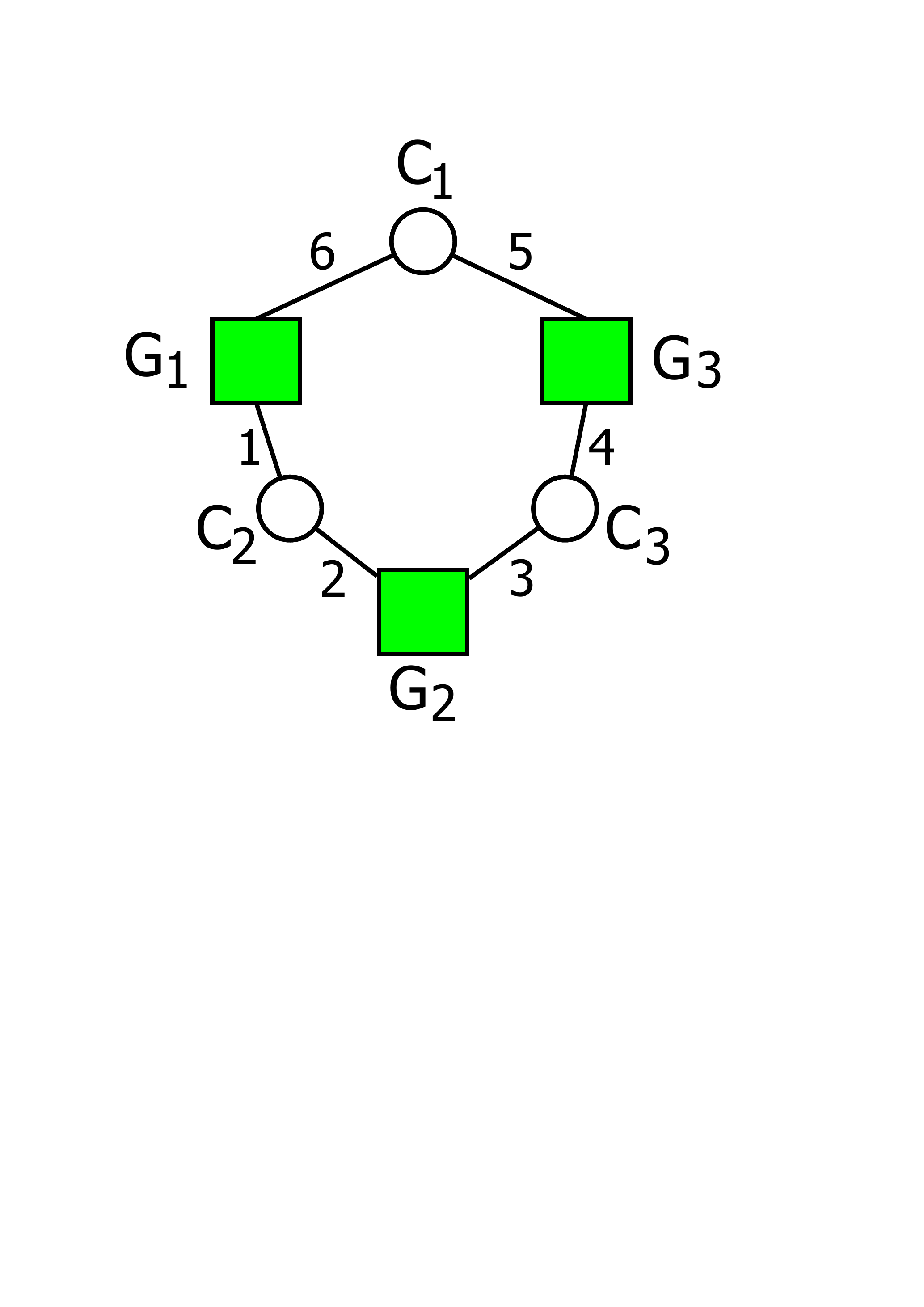}
\end{center}
\end{minipage}
\begin{minipage}{.75\linewidth}
\begin{align*}
, \quad \Gamma &= \begin{cases}
G = \begin{cases}
G_{1}= |1_10_6\rangle + |0_11_6\rangle + |1_11_6\rangle~, &\text{clause $(x_1 \vee x_2)$}\\
G_{2}= |1_20_3\rangle + |0_21_3\rangle + |1_21_3\rangle~, &\text{clause $(x_2 \vee x_3)$}\\
G_{3}= |1_40_5\rangle + |0_41_5\rangle + |1_41_5\rangle~, &\text{clause $(x_1 \vee x_3)$}
\end{cases}\\
C = \begin{cases}
{C}_{1}= \langle 0_50_6| + \langle 1_51_6|~,&\hspace{38pt}\text{variable $x_1$}\\
{C}_{2}= \langle 0_10_2| + \langle 1_11_2|~,&\hspace{38pt}\text{variable $x_2$}\\
{C}_{3}= \langle 0_30_4| + \langle 1_31_4|~,&\hspace{38pt}\text{variable $x_3$}
\end{cases}\\
E = \{1,\ldots, 6\}~.
\end{cases}
\end{align*}
\end{minipage}\\
\begin{minipage}{.25\linewidth}
\begin{center}
% trim=l b r t
\includegraphics[scale=.25,trim=30 370 0 93]{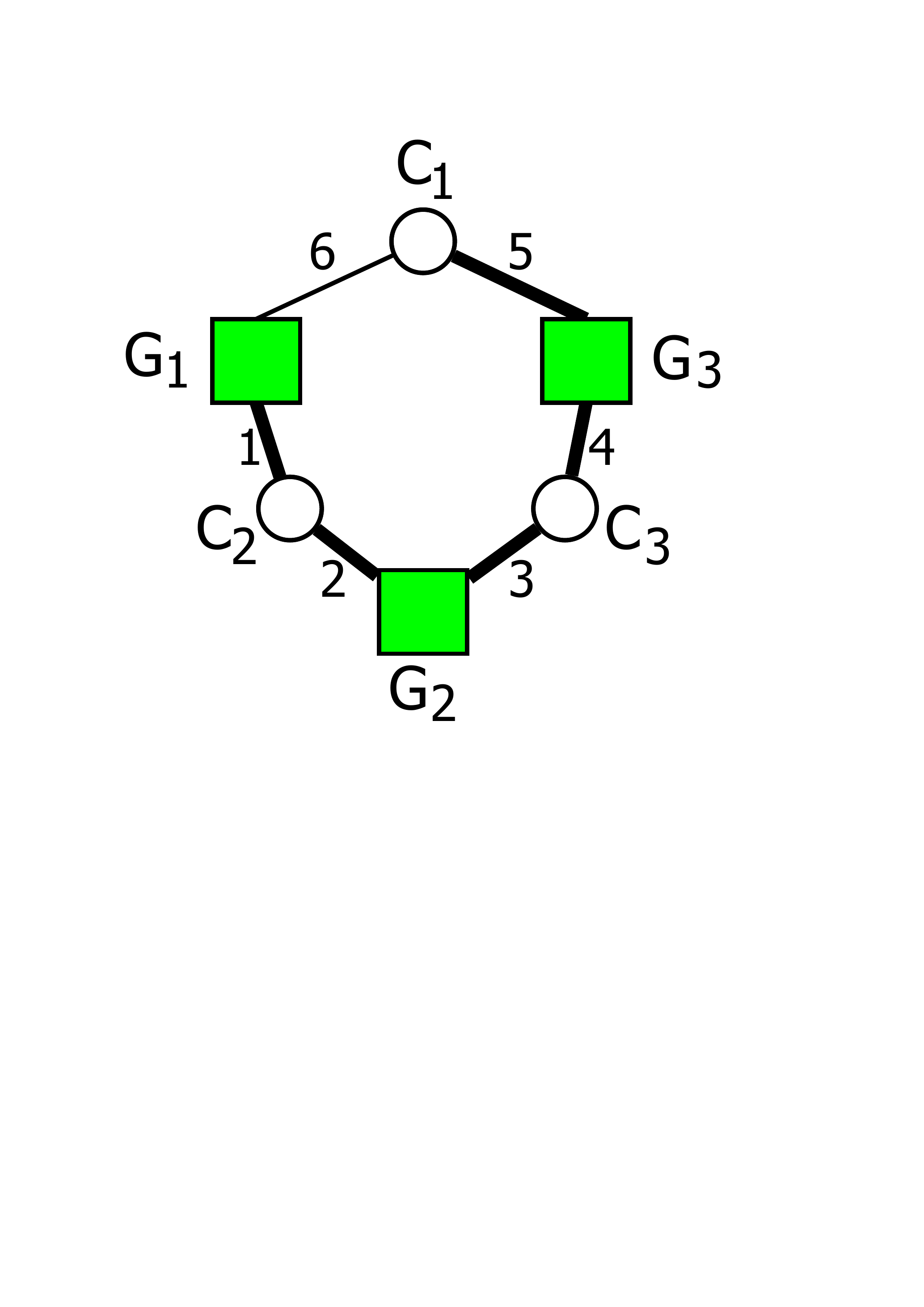}
\end{center}
\end{minipage}
\begin{minipage}{.35\linewidth}
\begin{align*}
, \quad \quad \quad \underbrace{G_2\Big(C_2\big(G_1\big),C_3\big(G_3(C_1)\big)\Big)}_{\text{Cayley/Dyck/Newick format}} &\Longrightarrow
\end{align*}
\begin{align*}
\quad \quad \quad \quad \quad \quad G_2 \otimes C_2 \otimes G_1 \otimes C_3 \otimes G_3 \otimes C_1 & = val(\Gamma)~.
\end{align*}
\end{minipage}
\caption{The tensor contraction network $\Gamma$ associated with boolean formula Eq. \ref{eq_bool_form} (top). A minimum spanning tree $T$ rooted at $G_2$, and the corresponding Newick format (bottom).} \label{fig_tcn_ex2}
\end{figure}

We claim that the $val(\Gamma) = 4$, which is the number of satisfying solutions to the Boolean formula in Eq. \ref{eq_bool_form}. We first examine the cogates $C_1,C_2,C_3$. Observe that the bra $\langle 00| + \langle 11|$ forces the state of the incident edges to be either all zero ($\langle 0|$) or all one ($\langle 1 |$). This is combinatorially analogous to the fact that if a variable is set to \textbf{false} in a clause, then it is \textbf{false} everywhere in the Boolean formula (and similarly for \textbf{true}). We now examine gates $G_1,G_2,G_3$. The only kets that exist in these gates are kets consistent with a standard OR operation. For example, $|10\rangle, |01 \rangle$ or $|11\rangle$. As we compute the tensor contraction of $\Gamma$, we will see that the contractions of bras and kets that are performed represent a brute-force iteration over \emph{every possible solution}, and the only contractions $\langle x | y\rangle$ that equal one (and are thus ``counted") correspond to legitimate solutions.

We first compute $val(\Gamma)$ via a standard exponential-time algorithm.  In Step 1, we arbitrarily choose a root of $\Gamma$, and compute a minimum spanning tree. In Step 2, we represent the minimum spanning tree in Cayley/Dyck/Newick format. In Step 3, we perform the contraction according to the order defined by the Newick tree representation.

In Fig. \ref{fig_tcn_ex2} (bottom), we demonstrate a minimum spanning tree of $\Gamma$ with $G_2$ as the root, and the corresponding nested Newick \cite{cite_newick} format which defines the order of the contraction. We now calculate $val(\Gamma)$.
%{\small{
%\begin{align*}
%\underbrace{ G_3  \otimes C_1}_{(\cdot)}&=   \big( |1_{4}0_{5}\rangle + |0_{4}1_{5}\rangle + |1_{4}1_{5}\rangle \big)  \otimes \big(  \langle 0_5 0_6| + \langle 1_5 1_6| \big)
%=  \underbrace{\langle 0_6  |1_{4}\rangle + \langle 1_6  |0_{4}\rangle +  \langle 1_6 |1_{4}\rangle}_{(\cdot)}~.\\[-2.7ex]
%{C}_{3} \otimes (\cdot) &= \big( \langle 0_3 0_4| + \langle 1_3 1_4| \big) \otimes  \overbrace{\big(   \langle 0_6  |1_{4}\rangle + \langle 1_6  |0_{4}\rangle +  \langle 1_6 |1_{4}\rangle  \big)}^{(\cdot)}~
%=  \langle 0_3 1_6|  + \langle 1_3 0_6| + \langle 1_31_6|~.\\
%{G}_{1} \otimes (\cdot) &=  \big( |1_{1}0_{6}\rangle + |0_{1}1_{6}\rangle + |1_{1}1_{6}\rangle \big)  \otimes \big(  \langle 0_3 1_6|  + \langle 1_3 0_6| + \langle 1_31_6| \big)\\
%&= \langle 1_3 |1_{1}\rangle + \langle 0_3  |0_{1}\rangle + \langle 1_3  |0_{1}\rangle +  \langle 0_3  |1_{1}\rangle + \langle 1_3 |1_{1}\rangle~.\\
%{C}_{2} \otimes (\cdot) &=  \big(  \langle 0_1 0_2| + \langle 1_1 1_2| \big) \otimes \big(   \langle 1_3 |1_{1}\rangle + \langle 0_3  |0_{1}\rangle + \langle 1_3  |0_{1}\rangle +  \langle 0_3  |1_{1}\rangle + \langle 1_3 |1_{1}\rangle    \big)\\
%&=  \langle 0_2 0_3 |  +  \langle 0_2  1_3|  + 2\langle 1_2 1_3| +  \langle 1_2 0_3|~.\\
%{G}_{2} \otimes (\cdot) &=   \big( |1_{2}0_{3}\rangle + |0_{2}1_{3}\rangle + |1_{2}1_{3}\rangle \big)  \otimes \big(    \langle 0_2 0_3 |  +  \langle 0_2  1_3|  + 2\langle 1_2 1_3| +  \langle 1_2 0_3|       \big) = \mathbf{4}~. [-2.7ex]
%\end{align*}}}\normalsize

{\small{
\begin{align*}
G_3  \otimes C_1 &=   \big( |1_{4}0_{5}\rangle + |0_{4}1_{5}\rangle + |1_{4}1_{5}\rangle \big)  \otimes \big(  \langle 0_5 0_6| + \langle 1_5 1_6| \big)
= \langle 0_6  |1_{4}\rangle + \langle 1_6  |0_{4}\rangle +  \langle 1_6 |1_{4}\rangle~.\\
{C}_{3} \otimes G_3  \otimes C_1 &= \big( \langle 0_3 0_4| + \langle 1_3 1_4| \big) \otimes  \big(   \langle 0_6  |1_{4}\rangle + \langle 1_6  |0_{4}\rangle +  \langle 1_6 |1_{4}\rangle  \big)~
=  \langle 0_3 1_6|  + \langle 1_3 0_6| + \langle 1_31_6|~.
\end{align*}
\vspace{-20pt}
\begin{align*}
G_2 \otimes C_2 \otimes G_1 \otimes C_3 \otimes G_3 \otimes C_1 &= \mathbf{4}~.
\end{align*}
}}\normalsize

By inspecting the intermediary steps, we see that the individual bra-ket contractions performed throughout the calculation are essentially a brute-force iteration over all the possible true-false assignments to the variables in the original Boolean formula. Thus, it is clear that processing the contraction in this way takes exponential time.

We will now present an alternative, polynomial-time computable method for calculating $val(\Gamma)$. First we recall the direct sum (denoted $\oplus$) of two \emph{labeled} matrices. In order to describe the labeling method of the rows and columns, recall that in a tensor contraction network, every gate or cogate is an element of a tensor product space as defined by the incident edges. For example, in Fig. \ref{fig_tcn_ex2}, observe that cogate $C_3$ is incident on edges 3 and 4, and $C_3 = \langle 0_30_4| + \langle 1_31_4| \in V^{*}_3 \otimes V^{*}_4$. Since cogate $C_3$ is Pfaffian (see Ex. \ref{ex_pfaff_gcg}), there exists a matrix $\Theta$ and a scalar $\beta$ such that $\beta\hspace{1pt}\text{sPf}^{*}(\Theta) =  \langle 0_30_4| + \langle 0_30_4|$. In this case, $\Theta$ is a $2 \times 2$ matrix (since $C_3$ is a 2-arity cogate), and the rows and columns of $\Theta$ are labeled with edges $3$ and $4$:
\vspace{-12pt}
{\small {
\[
\bbordermatrix{
   & 3& 4\cr
3 & 0 &  \frac{\sqrt{5}}{2} + \frac{3}{2}\cr
4 & -\big(  \frac{\sqrt{5}}{2} + \frac{3}{2} \big) & 0\cr
}~.\]}}
Now consider a tensor contraction network $\Gamma = \{G, C, E\}$ with all gates/cogates Pfaffian, and let $\{\Xi_1,\ldots, \Xi_{|G|}\}$ be the set of matrices associated with the Pfaffian gates $G = \{G_1, \ldots, G_{|G|}\}$  (note that no cogates are included). Since $\Gamma$ is a bipartite graph, the row and column labels associated with any two $\Xi, \Xi' \in \{\Xi_1,\ldots, \Xi_{|G|}\}$ are disjoint. Let $I$ be the set of row/column labels of $\Xi$ and $J$ be the set of row/column labels of $\Xi'$, and let $\sigma$ be an order on the edges of $\Gamma$. Then the direct sum $\Xi \oplus_{\sigma} \Xi'$ is the matrix $\Xi''$ with row/column label set $I \cup J$ \emph{ordered according to} $\sigma$ where $\xi''_{k\ell}=0$ if $k \in I$ and $\ell \in J$ or vice versa,  $\xi''_{k\ell}=\xi_{k\ell}$ if $k,\ell \in I$, and  $\xi''_{k\ell}=\xi'_{k\ell}$ if $k,\ell \in J$. For example,
% with edges labeled $1,\ldots,m$
{\scriptsize
%\footnotesize
\[
\underbrace{\bordermatrix{
&4& 5 &6\cr
4&\xi_{44}&\xi_{45}&\xi_{46}\cr
5&\xi_{54}&\xi_{55}&\xi_{56}\cr
6&\xi_{64}&\xi_{65}&\xi_{66}\cr
}}_{\Xi} \quad \oplus_{\sigma} \quad
\underbrace{\bordermatrix{
&1& 3 & 8 & 9\cr
1&\xi'_{11}&\xi'_{13}&\xi'_{18}&\xi'_{19}\cr
3&\xi'_{31}&\xi'_{33}&\xi'_{38}&\xi'_{39}\cr
8&\xi'_{81}&\xi'_{83}&\xi'_{88}&\xi'_{89}\cr
9&\xi'_{91}&\xi'_{93}&\xi'_{98}&\xi'_{99}
}}_{\Xi'} \quad = \quad 
\underbrace{\bordermatrix{
&1&3&4&5&6&8&9\cr
1&\xi'_{11} & \xi'_{13} & 0 & 0 & 0 & \xi'_{18} & \xi'_{19}\cr
3&\xi'_{31} & \xi'_{33} & 0 & 0 & 0 & \xi'_{38} & \xi'_{39}\cr
4& 0 & 0 & \xi_{44}&\xi_{45}&\xi_{46} & 0 & 0\cr
5& 0 & 0 & \xi_{54}&\xi_{55}&\xi_{56} & 0 & 0\cr
6& 0 & 0 & \xi_{64}&\xi_{65}&\xi_{66} & 0 & 0\cr
8&\xi'_{81} & \xi'_{83} & 0 & 0 & 0 & \xi'_{88} & \xi'_{89}\cr
9&\xi'_{91} & \xi'_{93} & 0 & 0 & 0 & \xi'_{98} & \xi'_{99}
}}_{\Xi \oplus_{\sigma} \Xi' = \Xi''}
\]} \normalsize
The Pfaffian circuit evaluation theorem relies on the \emph{planar spanning tree edge} order. 

\vspace{-10pt}
\begin{figure}[!h]
\begin{minipage}{.23\linewidth}
\begin{center}
% trim=l b r t
\includegraphics[scale=.21,clip=true,trim=0 369 50 90]{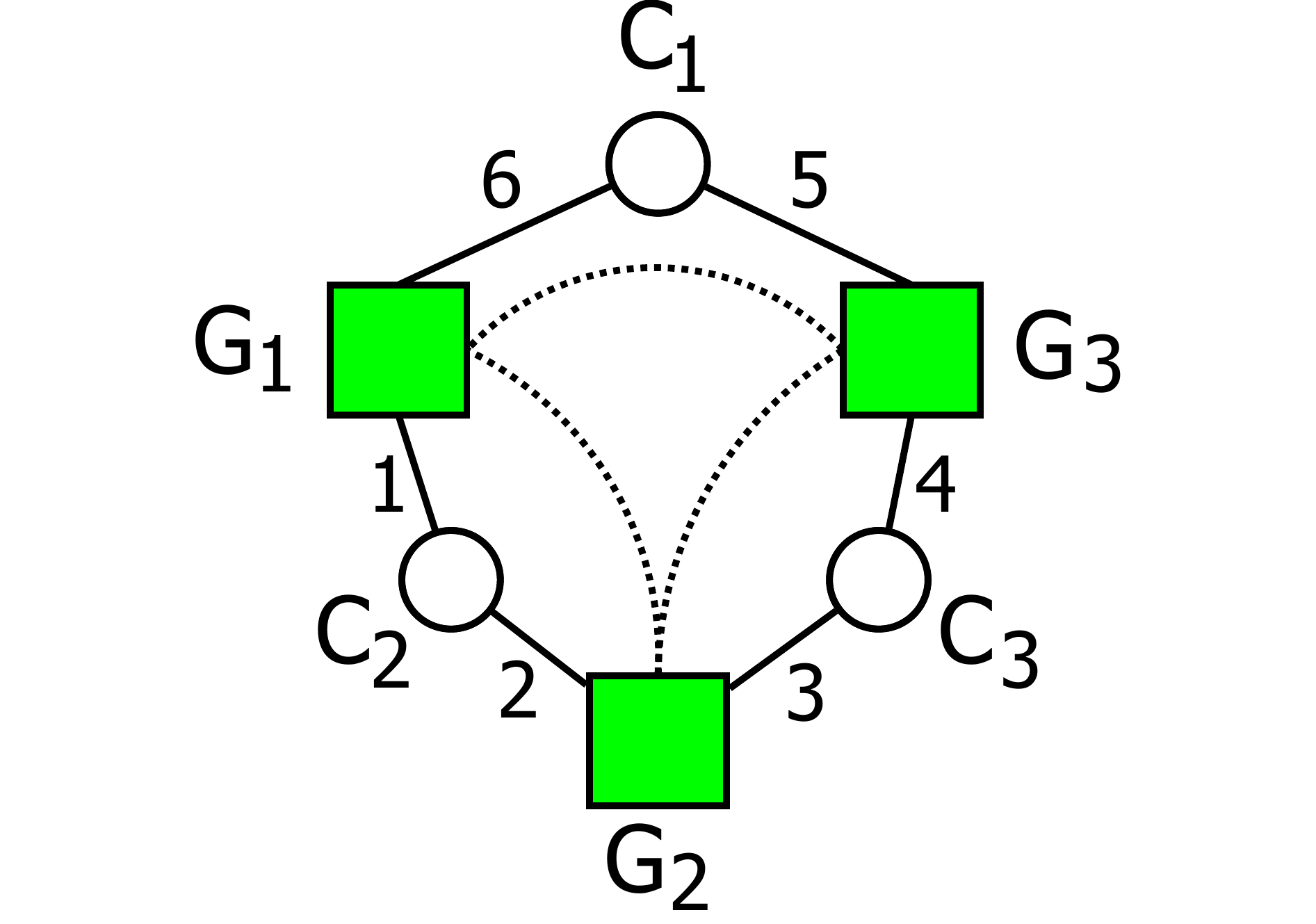}
\end{center}
\vspace{-1pt}
%\caption*{\hspace{17pt}(a)}
\hspace{51pt}(a)
\end{minipage}
\begin{minipage}{.23\linewidth}
\begin{center}
% trim=l b r t
\includegraphics[scale=.21,clip=true,trim=0 369 45 90]{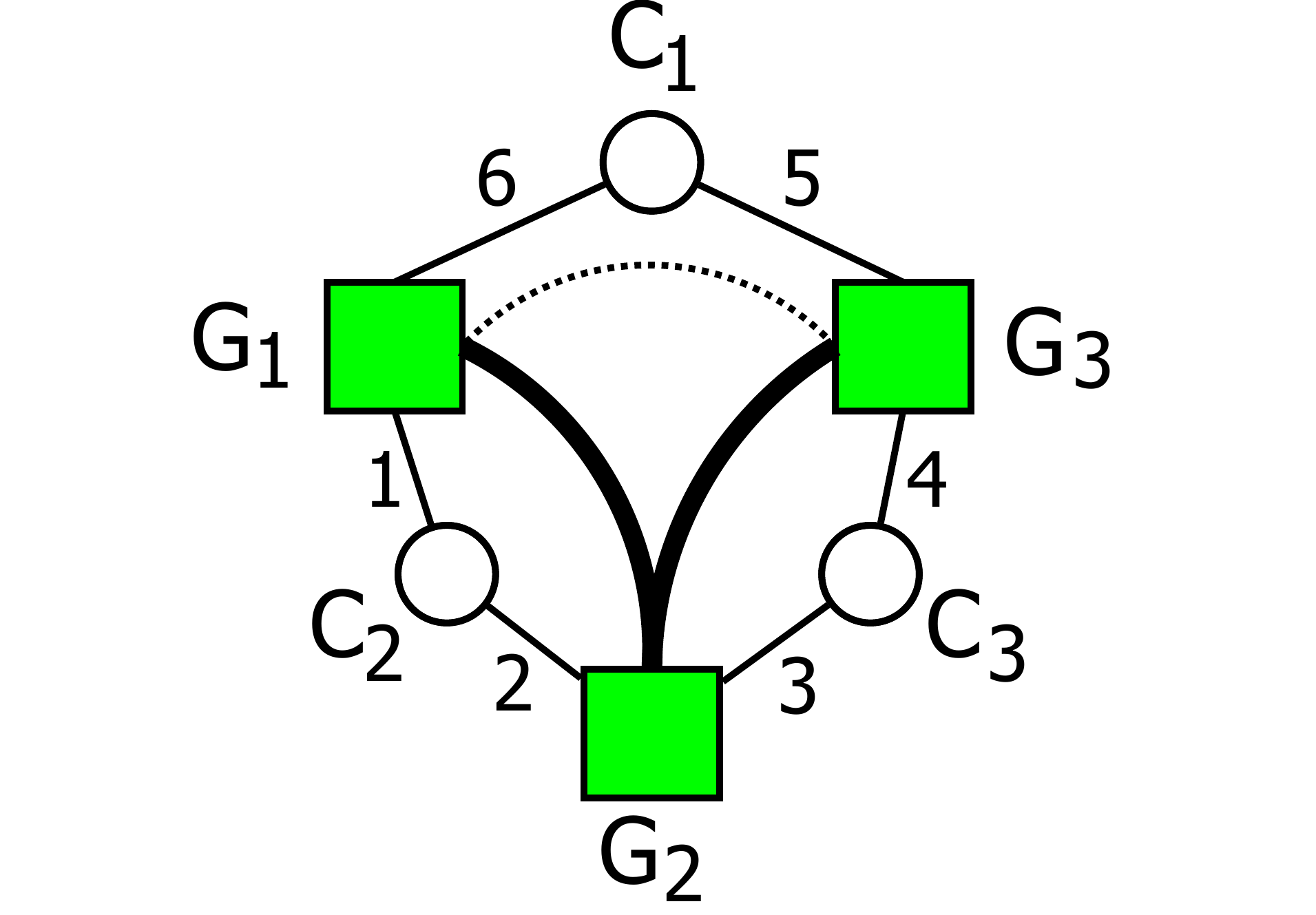}
\end{center}
\vspace{-1pt}
%\caption*{\hspace{16pt}(b)}
\hspace{51pt}(b)
\end{minipage}
\begin{minipage}{.23\linewidth}
\vspace{5pt}
\begin{center}
% trim=l b r t
\includegraphics[scale=.21,clip=true,trim=0 362 50 90]{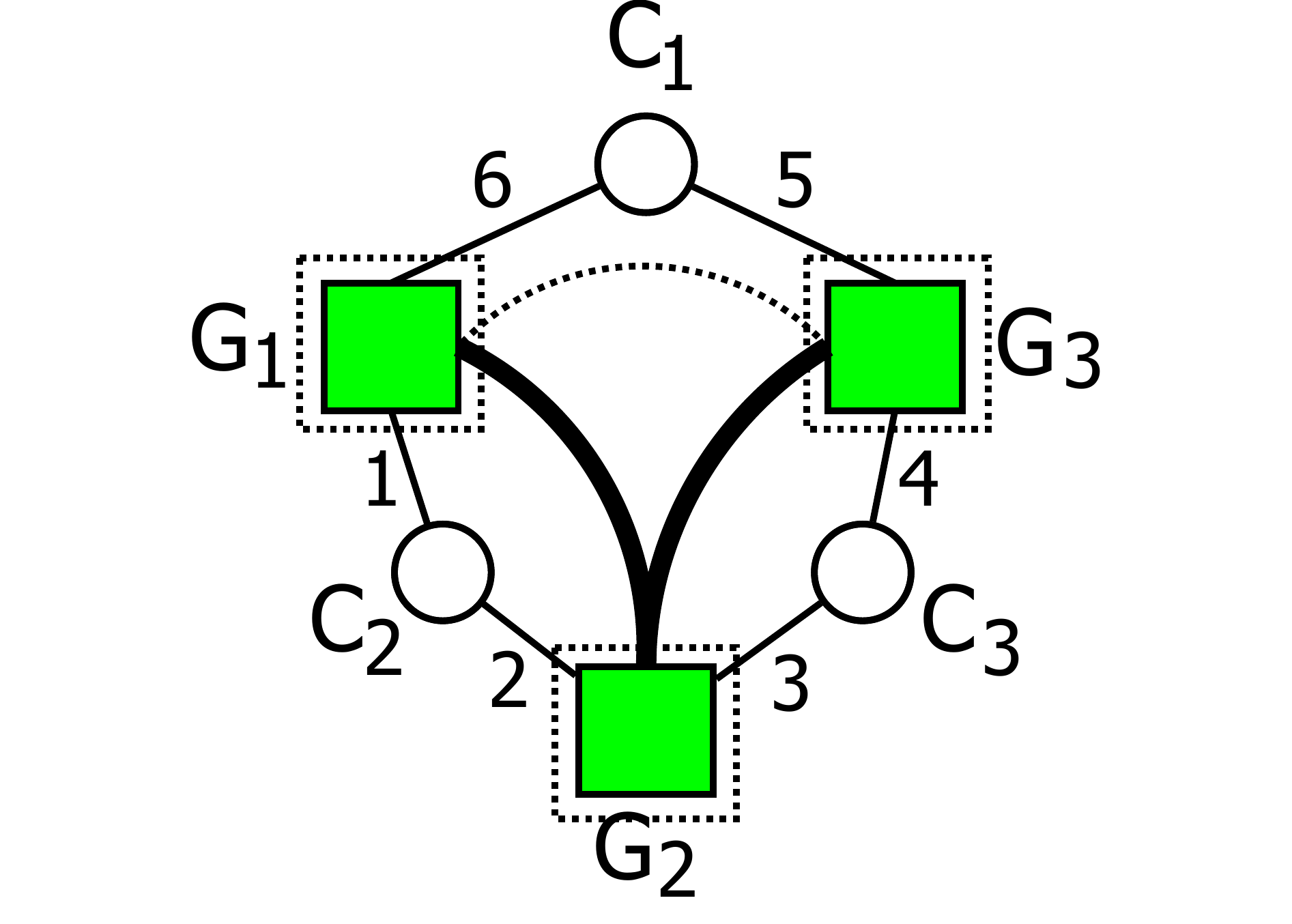}
\end{center}
\vspace{-1pt}
%\caption*{\hspace{16pt}(c)}
\hspace{51pt}(c)
\end{minipage}
\begin{minipage}{.23\linewidth}
\vspace{5pt}
\begin{center}
% trim=l b r t
\includegraphics[scale=.21,clip=true,trim=0 362 45 90]{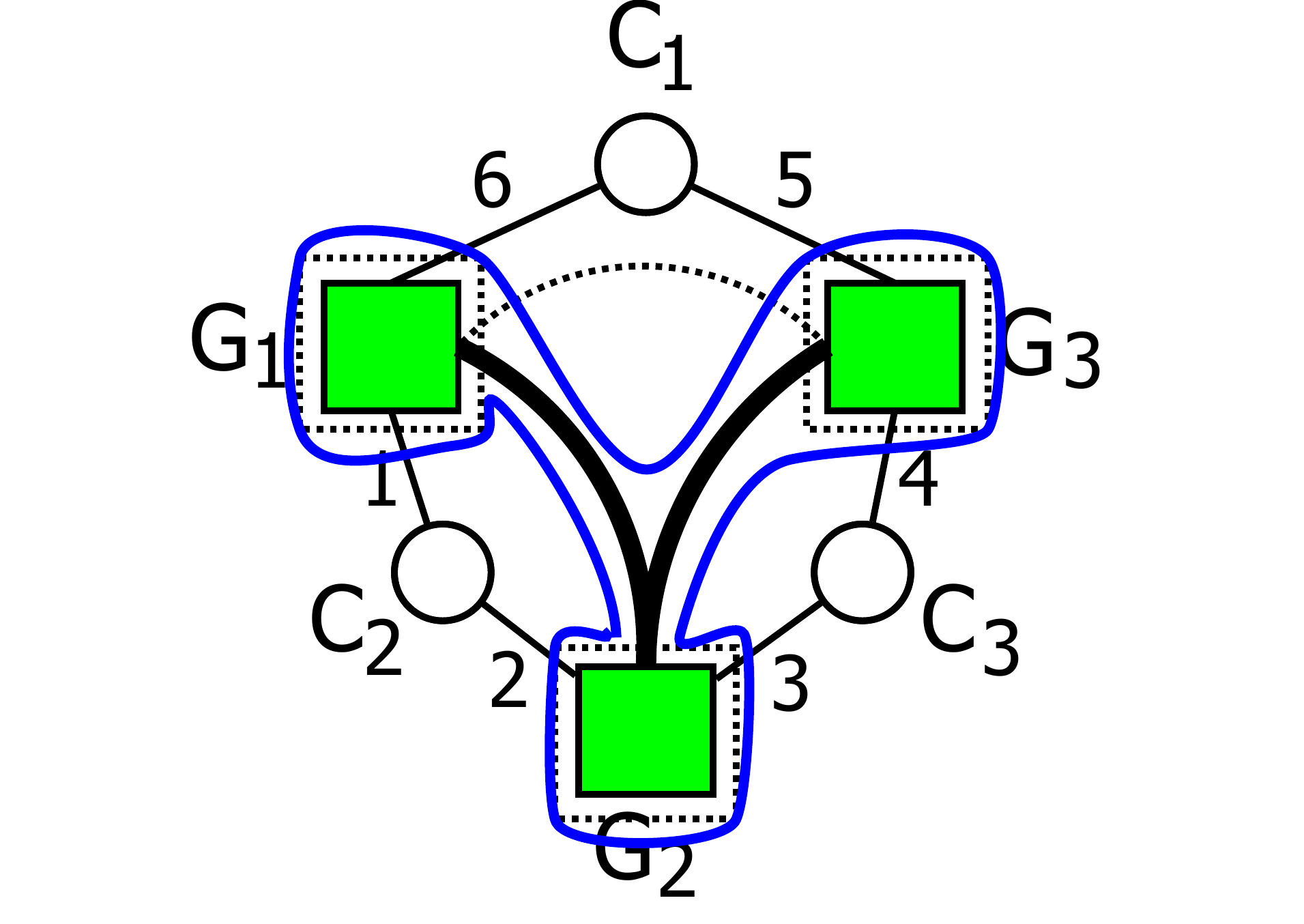}
\end{center}
\vspace{-1pt}
%\caption*{\hspace{16pt}(d)}
\hspace{51pt}(d)
\end{minipage}
\caption{Finding the planar spanning tree edge order.} \label{fig_pst_eord}
\end{figure}
\vspace{-6pt}
In order to the find the \emph{planar spanning tree edge order}, we follow the four steps demonstrated in Fig. \ref{fig_pst_eord}. Given a tensor contraction network $\Gamma$, for each interior face, we connect the \emph{gates} incident on that interior face in a cycle (Fig. \ref{fig_pst_eord}.a). Next, we take a spanning tree of the edges in those cycles (Fig. \ref{fig_pst_eord}.b), and draw ``boxes" around each of the gates (Fig. \ref{fig_pst_eord}.c). Finally, we trace the spanning tree from gate to gate, following the ``boxes" around the gates (Fig. \ref{fig_pst_eord}.d). This creates a closed curve that crosses every edge exactly once. Then, we trace the curve, beginning at any point, and tracing in any direction, ordering the edges according to when they are crossed by the curve. For example, in Fig. \ref{fig_pst_eord}.d, if we begin at $G_1$ where the curve crosses between gate $G_1$ and cogate $C_2$, and trace in counter-clockwise order. This yields the very convenient edge order $\{1,2,3,4,5,6\}$. 
The planar spanning tree edge order is discussed in detail in \cite{morton_pfaff}.

The following lemma is the first part of the Pfaffian circuit evaluation theorem.

\begin{lemma} [\cite{morton_pfaff}] \label{lem_g_tp_ds} Let $\Gamma$ be a \emph{planar} tensor contraction network of Pfaffian gates and cogates and let $\sigma$ be a planar spanning tree edge order. Let $G_1,\ldots, G_{|G|}$ be the Pfaffian gates with $G_i = \alpha_i\hspace{1pt} \text{sPf}(\Xi_i)$ for $1 \leq i \leq |G|$.  Then 
\begin{align}
G_{1} \otimes \cdots \otimes G_{|G|} = \big(\alpha_{1}\cdots \alpha_{|G|}\big)\hspace{2pt}\text{\emph{sPf}} \big( \Xi_{1} \oplus_{\sigma} \cdots \oplus_{\sigma} \Xi_{|G|} \big)~. \label{eq_tp_sp_exp}
\end{align}
The analogous result holds for cogates.
%where the rows/columns of the direct sum $ \Xi_{1} \oplus_{sigma \cdots \oplus \Xi_{\sigma(|G|)}$ are ordered according to $\sigma$. 
\end{lemma}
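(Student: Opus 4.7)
The plan is to expand both sides of Eq.~\ref{eq_tp_sp_exp} using Definition~\ref{def_sPf} and match the coefficients of each basis ket $|I\rangle$ indexed by subsets $I$ of the total edge set $E = E_1 \sqcup \cdots \sqcup E_{|G|}$, where $E_i$ denotes the edges incident to gate $G_i$. Applying Definition~\ref{def_sPf} to each factor rewrites the left-hand side as
\begin{align*}
G_1 \otimes \cdots \otimes G_{|G|} = (\alpha_1 \cdots \alpha_{|G|})\!\!\sum_{\substack{I_i \subseteq E_i \\ i=1,\ldots,|G|}}\!\!\left(\prod_{i=1}^{|G|} \text{Pf}(\Xi_i|_{I_i})\right)|I_1\rangle \otimes \cdots \otimes |I_{|G|}\rangle,
\end{align*}
and the right-hand side as
\begin{align*}
(\alpha_1\cdots \alpha_{|G|}) \sum_{I \subseteq E} \text{Pf}\big((\Xi_1 \oplus_\sigma \cdots \oplus_\sigma \Xi_{|G|})|_I\big)\, |I\rangle,
\end{align*}
where $|I\rangle$ on the right is interpreted in the edge order $\sigma$. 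Partitioning the indices on the left by writing $I = I_1 \sqcup \cdots \sqcup I_{|G|}$ with $I_i = I \cap E_i$ reduces the lemma to the single-subset identity
\begin{align*}
\text{Pf}\big((\Xi_1 \oplus_\sigma \cdots \oplus_\sigma \Xi_{|G|})|_I\big)\,|I\rangle = \left(\prod_{i=1}^{|G|} \text{Pf}(\Xi_i|_{I_i})\right) |I_1\rangle \otimes \cdots \otimes |I_{|G|}\rangle.
\end{align*}

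Next I would verify this single-subset identity in two steps. Since the combined matrix has nonzero entries only within each block $\Xi_i$, its submatrix indexed by $I$ is a (possibly interleaved) block-diagonal skew-symmetric matrix whose blocks are exactly $\Xi_i|_{I_i}$. Using the classical multiplicativity of the Pfaffian over true block-diagonal skew-symmetric matrices, together with the sign change that a Pfaffian picks up under a simultaneous row-and-column permutation, I obtain
\begin{align*}
\text{Pf}\big((\Xi_1 \oplus_\sigma \cdots \oplus_\sigma \Xi_{|G|})|_I\big) = \epsilon(I_1,\ldots,I_{|G|};\sigma)\prod_{i=1}^{|G|}\text{Pf}(\Xi_i|_{I_i}),
\end{align*}
where $\epsilon(I_1,\ldots,I_{|G|};\sigma)$ is the sign of the shuffle permutation that rearranges the positions of the elements of $I$ (read off in $\sigma$-order) back into block order.

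The third step is to observe that the tensor product $|I_1\rangle \otimes \cdots \otimes |I_{|G|}\rangle$, once expressed as a ket on $E$ in $\sigma$-order, carries exactly the same sign $\epsilon(I_1,\ldots,I_{|G|};\sigma)$. This is an artifact of the Grassmann/super-algebra identification underlying the subPfaffian construction, in which swapping two adjacent occupied (1-)kets anticommutes; this sign convention is precisely the one needed to make $\text{sPf}$ natural with respect to direct sums. The two copies of $\epsilon$ cancel, yielding coefficient-wise equality and establishing Eq.~\ref{eq_tp_sp_exp}; the analogous statement for cogates follows by the dual argument applied to $\text{sPf}^{*}(\Xi)=\sum_J \text{Pf}(\Xi|_{J^C})\langle J|$. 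Planarity of $\Gamma$, together with the choice of $\sigma$ as a planar spanning tree edge order, is what guarantees these shuffles can be realized without ``crossing'' obstructions, so the signs remain globally consistent with the planar embedding.

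The main obstacle I expect is precisely the sign-matching in the third step: rigorously identifying the row-permutation sign on the Pfaffian side with the ket-reordering sign on the tensor-product side requires carefully unwinding the conventions implicit in Definition~\ref{def_sPf}. I would therefore organize the argument inductively on $|G|$, with the key base case being the two-gate identity $\text{sPf}(\Xi_1 \oplus_\sigma \Xi_2) = \text{sPf}(\Xi_1) \otimes \text{sPf}(\Xi_2)$ interpreted in the appropriate signed sense. This reduces the combinatorics of an arbitrary shuffle to a sequence of adjacent transpositions, where the matching of one Pfaffian sign-flip with one ket anticommutation is transparent, and the general case then follows by composing these local comparisons along the planar spanning tree order $\sigma$.
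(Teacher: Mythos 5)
The paper does not supply its own proof of Lemma~\ref{lem_g_tp_ds}; it is stated as a citation to \cite{morton_pfaff}. So I evaluate your proposal on its internal logic rather than against an in-paper argument.

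Your first two steps are correct: expanding both sides via Definition~\ref{def_sPf}, reducing to a coefficient-wise identity indexed by $I\subseteq E$, and observing that $(\Xi_1 \oplus_\sigma \cdots \oplus_\sigma \Xi_{|G|})|_I$ is, after the appropriate simultaneous row-and-column permutation, block-diagonal with blocks $\Xi_i|_{I_i}$, so that
\begin{align*}
\text{Pf}\big((\Xi_1 \oplus_\sigma \cdots \oplus_\sigma \Xi_{|G|})|_I\big) = \epsilon(I_1,\ldots,I_{|G|};\sigma)\prod_{i=1}^{|G|}\text{Pf}(\Xi_i|_{I_i}).
\end{align*}
The gap is the third step. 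Under Definition~\ref{def_sPf}, $|I\rangle$ is an induced basis vector of an \emph{ordinary} tensor product $V_1\otimes\cdots\otimes V_n$; the kets are indicator bit-strings, not Grassmann/exterior monomials, and they do not anticommute under reordering. The basis element $|I_1\rangle\otimes\cdots\otimes |I_{|G|}\rangle$ and $|I\rangle$ (read in $\sigma$-order) are the \emph{same} vector under the canonical, sign-free isomorphism between $\bigotimes_i\big(\bigotimes_{e\in E_i}V_e\big)$ and $\bigotimes_{e\in E}V_e$. There is no second copy of $\epsilon$ to cancel the first. Indeed, if the kets did carry your sign, your argument would establish Eq.~\ref{eq_tp_sp_exp} for an \emph{arbitrary} total order $\sigma$, which is false: take two arity-$2$ gates with $\Xi_1=\left[\begin{smallmatrix}0&a\\-a&0\end{smallmatrix}\right]$ on edges $\{1,3\}$ and $\Xi_2=\left[\begin{smallmatrix}0&b\\-b&0\end{smallmatrix}\right]$ on edges $\{2,4\}$, and $\sigma=(1,2,3,4)$. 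Then $\text{Pf}(\Xi_1\oplus_\sigma\Xi_2) = -ab$ while $\text{Pf}(\Xi_1)\text{Pf}(\Xi_2)=ab$, so the coefficient of $|1111\rangle$ disagrees between the two sides.

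What the proof actually requires, and where the planar spanning-tree hypothesis must do work, is the combinatorial statement that for a planar spanning tree edge order $\sigma$ one has $\epsilon(I_1,\ldots,I_{|G|};\sigma)=+1$ for \emph{every} $I\subseteq E$. This is the genuine content: the closed curve construction of Fig.~\ref{fig_pst_eord} crosses each edge exactly once, so the edge sets $E_1,\ldots,E_{|G|}$ are pairwise non-crossing (nested or disjoint intervals in the cyclic order induced by $\sigma$), and the shuffle that merges the $I_i$ into $\sigma$-order is always an even permutation. Your plan to induct on $|G|$ is sensible, but the base case you propose, a ``signed'' identity $\text{sPf}(\Xi_1\oplus_\sigma\Xi_2)=\text{sPf}(\Xi_1)\otimes\text{sPf}(\Xi_2)$, should instead be an unsigned identity valid exactly when $E_1$ and $E_2$ are non-crossing under $\sigma$; the inductive step should contract a leaf of the spanning tree, whose edge set is contiguous in $\sigma$, and verify that removing a contiguous block does not affect the parity of the remaining shuffles. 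As written, your argument treats planarity as a spectator and leans on a ket-anticommutation convention that Definition~\ref{def_sPf} does not provide.
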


Observe that both sides of Eq. \ref{eq_tp_sp_exp} require exponential time to evaluate. However, the direct sum $\Xi_{1} \oplus_{\sigma} \cdots \oplus_{\sigma} \Xi_{|G|}$ can be calculated in polynomial-time. We now combine the gate and cogate versions of Lemma \ref{lem_g_tp_ds} to state the Pfaffian circuit evaluation theorem.
\begin{theorem} [Pfaffian circuit evaluation theorem, \cite{morton_pfaff}] \label{thm_pfaff_eval} Given a planar, Pfaffian tensor contraction network with an \emph{even} number of edges and an edge order $\sigma$, let $ \Xi_{1}, \ldots, \Xi_{|G|}$ (and $\Theta_{1}, \ldots, \Theta_{|C|}$, respectively) be the matrices from Lemma \ref{lem_g_tp_ds}. Furthermore, let $\Xi =  \Xi_{1} \oplus_{\sigma} \cdots \oplus_{\sigma} \Xi_{|G|}$, and $\Theta =  \Theta_{1} \oplus_{\sigma} \cdots \oplus_{\sigma} \Theta_{|C|}$.  Let $\widetilde{\Theta}$ be $\Theta$ with the signs flipped such that $\widetilde{\theta}_{ij} = (-1)^{i+j+1}\theta_{ij}$. Then
\begin{align}
\underbrace{\big\langle \beta\hspace{1pt} \text{\emph{sPf}}^{*}(\widetilde{\Theta})~|~\alpha \hspace{1pt}\text{\emph{sPf}}(\Xi) \big\rangle}_{\text{exponential-time}} &= \underbrace{\alpha\beta\hspace{1pt} \text{\emph{Pf}}( \widetilde{\Theta} + \Xi)}_{\text{polynomial-time}}~. \label{eq_pfaff_eval}
\end{align}
\end{theorem}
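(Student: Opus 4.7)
The plan is to reduce the theorem to a classical Pfaffian summation identity via Lemma \ref{lem_g_tp_ds} and the definitions of $\text{sPf}$ and $\text{sPf}^*$. First I would apply Lemma \ref{lem_g_tp_ds} (gate form) and its cogate analog to combine all the gates into $\alpha\,\text{sPf}(\Xi) = \alpha \sum_{I \subseteq [n]} \text{Pf}(\Xi|_I)\,|I\rangle$ and all the cogates into $\beta\,\text{sPf}^*(\widetilde{\Theta}) = \beta \sum_{J \subseteq [n]} \text{Pf}(\widetilde{\Theta}|_{J^C})\,\langle J|$, where $n$ is the total number of edges. The contraction is then evaluated by bilinearity and the orthonormality $\langle J | I \rangle = \delta_{IJ}$, which collapses the double sum to
\[
\alpha\beta \sum_{I \subseteq [n]} \text{Pf}\bigl(\widetilde{\Theta}|_{I^C}\bigr)\,\text{Pf}\bigl(\Xi|_I\bigr).
\]

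The second step is to identify this sum with $\alpha\beta\,\text{Pf}(\widetilde{\Theta} + \Xi)$ using the classical Pfaffian sum identity: for any two $n \times n$ skew-symmetric matrices $A, B$ (with $n$ even),
\[
\text{Pf}(A + B) \;=\; \sum_{\substack{I \subseteq [n] \\ |I| \text{ even}}} \epsilon(I)\,\text{Pf}\bigl(A|_{I^C}\bigr)\,\text{Pf}\bigl(B|_I\bigr),
\]
where $\epsilon(I) = \pm 1$ is the shuffle sign of the permutation interleaving $I$ with $I^C$ into $[n]$. (Odd-cardinality subsets drop out because the Pfaffian of an odd-dimensional skew-symmetric matrix vanishes.) This identity is a direct consequence of the definition of the Pfaffian as a signed sum over perfect matchings of $[n]$: grouping each matching according to which pairs lie inside $I$ versus inside $I^C$ (no cross-block pairs contribute in the expansion of $A + B$ because mixing an entry of $A$ with one of $B$ is impossible within a single matching term factored this way) yields the product of the two smaller Pfaffians, with the shuffle sign accounting for the re-ordering of the indices.

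The main obstacle is reconciling the shuffle sign $\epsilon(I)$ with the entry-wise rescaling $\widetilde{\theta}_{ij} = (-1)^{i+j+1}\theta_{ij}$; indeed, the whole purpose of the tilde operation is to absorb $\epsilon(I)$ into the matrix entries. Concretely, I would establish the combinatorial identity
\[
\text{Pf}\bigl(\widetilde{\Theta}|_{I^C}\bigr) \;=\; \epsilon(I)\,\text{Pf}\bigl(\Theta|_{I^C}\bigr)
\]
for every even-cardinality $I \subseteq [n]$. The cleanest route is to recognize that the rescaling $(-1)^{i+j+1}$ agrees (up to a uniform sign) with conjugation of $\Theta$ by the diagonal matrix $D = \text{diag}(1, -1, 1, -1, \ldots)$, so that any principal submatrix picks up the product of the selected diagonal entries of $D$, which by a short counting argument equals exactly $\epsilon(I)$. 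I would verify the convention on $n = 2$ and $n = 4$ to fix signs, then give an inductive proof by Laplace expansion of the Pfaffian along the first row, where the $(-1)^{j+1}$ factor produced at each pivot step is precisely what is being absorbed into $\widetilde{\Theta}$.

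Putting the three steps together yields
\[
\bigl\langle \beta\,\text{sPf}^*(\widetilde{\Theta})\,\big|\,\alpha\,\text{sPf}(\Xi)\bigr\rangle \;=\; \alpha\beta \sum_{I} \text{Pf}\bigl(\widetilde{\Theta}|_{I^C}\bigr)\text{Pf}\bigl(\Xi|_I\bigr) \;=\; \alpha\beta\,\text{Pf}(\widetilde{\Theta} + \Xi),
\]
which is the desired identity. The polynomial-time claim on the right-hand side follows because $\text{Pf}(M)^2 = \det(M)$ for skew-symmetric $M$, so a single Pfaffian of an $n \times n$ matrix is computable within the complexity class of the determinant, whereas the left-hand side is a priori a sum of $2^n$ products of sub-Pfaffians.
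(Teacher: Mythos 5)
The paper does not actually prove Theorem~\ref{thm_pfaff_eval} --- it is imported verbatim from \cite{morton_pfaff} --- so there is no in-paper argument to compare against. Your outline uses the right ingredients (bilinearity of the contraction, the perfect-matching expansion of $\text{Pf}(A+B)$, and the fact that the checkerboard twist $\widetilde{\theta}_{ij}=(-1)^{i+j+1}\theta_{ij}$ rescales $\text{Pf}(\Theta|_{I^C})$ by exactly $\epsilon(I)=(-1)^{|I^C|/2+\sum_{j\in I^C}j}$), but the way you chain them leaves a genuine gap in the last step.

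Concretely, your three steps combine to give
\[
\big\langle\,\text{sPf}^*(\widetilde{\Theta})\mid\text{sPf}(\Xi)\,\big\rangle
=\sum_I \text{Pf}(\widetilde{\Theta}|_{I^C})\,\text{Pf}(\Xi|_I)
=\sum_I \epsilon(I)\,\text{Pf}(\Theta|_{I^C})\,\text{Pf}(\Xi|_I)
=\text{Pf}(\Theta+\Xi),
\]
with the \emph{untwisted} $\Theta$ on the right, not $\text{Pf}(\widetilde{\Theta}+\Xi)$ as you assert in the final line. These two quantities are genuinely different for general skew-symmetric matrices: already for $n=4$ one computes
\[
\text{Pf}(\Theta+\Xi)-\text{Pf}(\widetilde{\Theta}+\Xi)
=-2\big(\theta_{13}\xi_{24}+\theta_{24}\xi_{13}\big),
\]
which does not vanish without further hypotheses. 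What makes the theorem true is precisely the hypothesis you never invoke --- planarity together with the planar spanning tree edge order from Lemma~\ref{lem_g_tp_ds}. Because $\Theta$ and $\Xi$ are the $\oplus_\sigma$-direct sums over a bipartite planar network, a nonzero $\theta_{ij}$ and a nonzero $\xi_{k\ell}$ can never interleave in a ``crossing'' pattern ($i<k<j<\ell$), so every subset $I$ with nonvanishing $\text{Pf}(\Theta|_{I^C})\,\text{Pf}(\Xi|_I)$ has shuffle sign $\epsilon(I)=+1$, and the difference above vanishes term by term. Without that extra argument the proof is incomplete, and indeed the identity as written is false for generic $\Theta,\Xi$; you would need to add a planarity/non-crossing lemma to close the gap.
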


Observe that the left-hand side of Eq. \ref{eq_pfaff_eval} requires exponential-time to compute, since it involves the contraction of every bra/ket in the subPfaffian/subPfaffian dual. However, the right-hand side has the same complexity as the determinant of an $m \times m$ matrix (where $\Gamma$ has $m$ edges), and is thus computable in polynomial-time in the size of $\Gamma$. To conclude our example, let
\begin{align*}
k &= \frac{ 5^{1/2}}{2} + \frac{3}{2}~, \quad \quad \text{and let} \quad A = \left[ \begin{array}{cc}
 \frac{1}{10}\big( -5^{3/4} + 5^{5/4} ) & 5^{-1/4}  \\
 \frac{1}{10} \big( -5^{3/4} - 5^{5/4} \big)  & 5^{-1/4}
\end{array} \right]~.
\end{align*}
Observe that, via Ex. \ref{ex_pfaff_gcg} and Fig. \ref{fig_tcn_ex2}, $\Gamma$ is Pfaffian under the homogeneous change of basis $A$. Then, via Lemma \ref{lem_g_tp_ds} and Ex. \ref{ex_pfaff_gcg}, we see representations for $G_2 \otimes G_1 \otimes G_3$ and $C_1 \otimes C_2 \otimes C_3$:

\begin{minipage}{.20\linewidth}
\begin{center}
% trim=l b r t
\includegraphics[scale=.22,clip=true,trim=0 362 50 90]{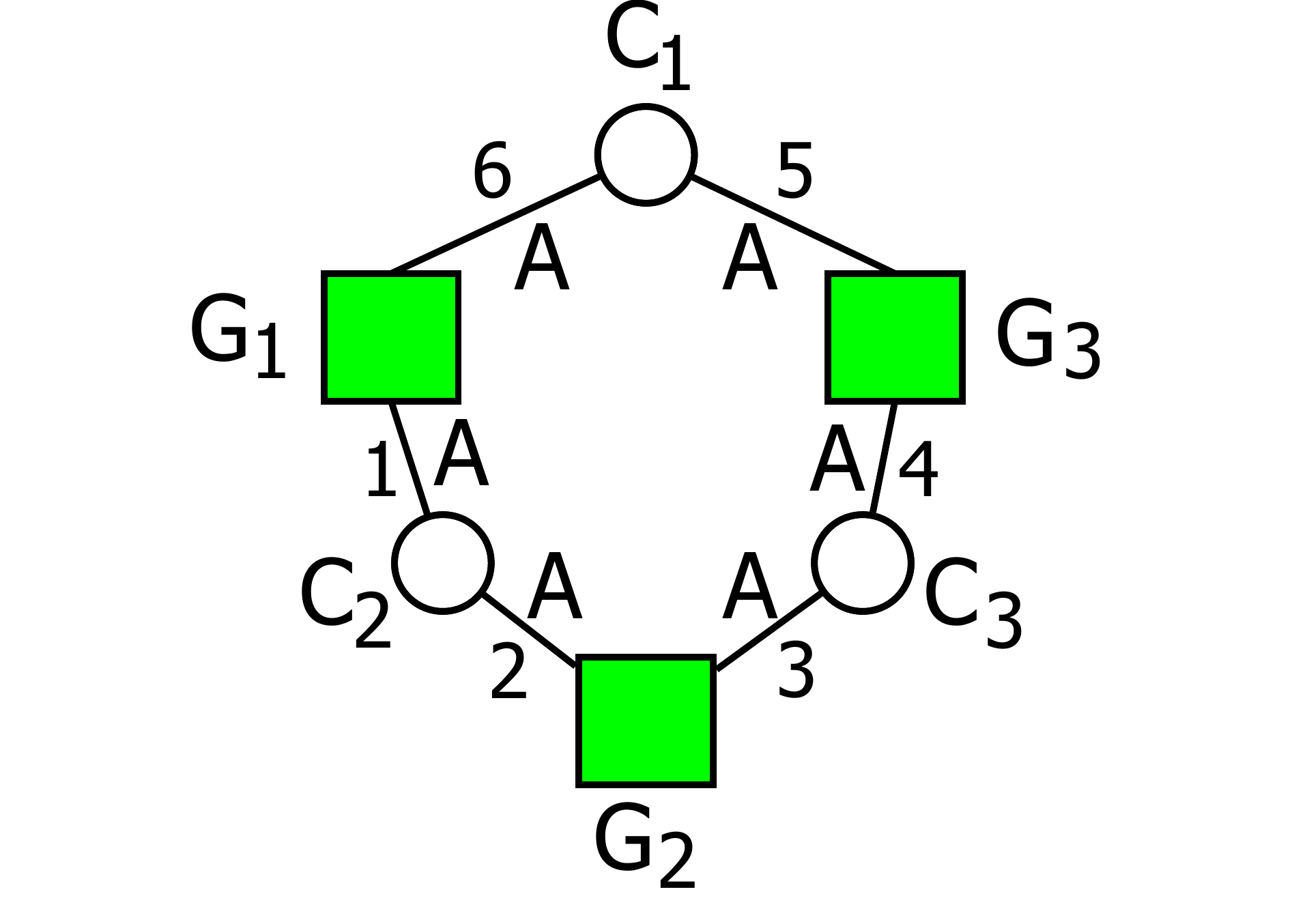}
\end{center}
\end{minipage}
\begin{minipage}{.80\linewidth}
\begin{align*}
(A \oplus A)(G_2 \otimes G_1 \otimes G_3) &= \text{sPf}(\underbrace{\Xi_2  \oplus_{\sigma} \Xi_1 \oplus_{\sigma} \Xi_3}_{\Xi})~,\\
(A^{-1} \oplus A^{-1})C_1 \otimes C_2 \otimes C_3 &= \beta^3\text{sPf}^{*}(\underbrace{\Theta_1  \oplus_{\sigma} \Theta_2 \oplus_{\sigma} \Theta_3}_{\Theta})~.
\end{align*}
\end{minipage}

Therefore,
\vspace{-10pt}
{\footnotesize{
\[
\overbrace{
\bordermatrix{\text{}&2&3\cr
                2& 0 &  -1\cr
                3& 1  & 0}}^{\Xi^G_2}\quad \oplus_{\sigma} \quad
\overbrace{
\bordermatrix{\text{}&1&6\cr
                1& 0 &  -1\cr
                6& 1  & 0}}^{\Xi^G_1}\quad \oplus_{\sigma} \quad
\overbrace{
 \bordermatrix{\text{}&4&5\cr
                4& 0 &  -1\cr
                5& 1  & 0}}^{\Xi^G_3}\quad = \quad
\bordermatrix{\text{}&1&2&3 &4&5&6\cr
                1& 0 & 0 & 0 & 0 & 0 & -1\cr
                2& 0 & 0 & -1 & 0 & 0 & 0\cr
                3& 0 & 1 & 0 & 0 & 0 & 0\cr
                4& 0 & 0 & 0 & 0 & -1 & 0\cr
		5& 0 & 0 & 0 & 1  & 0 & 0\cr
		6& 1 & 0 & 0 & 0  & 0 & 0} \quad \text{and~,}
\]

\vspace{-10pt}

\[
\overbrace{
\bordermatrix{\text{}&1&2\cr
                1& 0 &  k\cr
                2& -k   & 0}}^{\Theta_1} \quad \oplus_{\sigma} \quad
\overbrace{
\bordermatrix{\text{}&3&4\cr
                3& 0 &  k\cr
                4& - k   & 0}}^{\Theta_2} \quad \oplus_{\sigma} \quad
\overbrace{
 \bordermatrix{\text{}&5&6\cr
                5& 0 &   k \cr
                6& -k   & 0}}^{\Theta_3} \quad = \quad
\underbrace{\bordermatrix{\text{}&1&2&3&4&5&6\cr
                1& 0 & k & 0 & 0 & 0 & 0\cr
                2&  -k  & 0 & 0 & 0 & 0 & 0\cr
                3& 0 & 0 & 0 & k & 0 & 0\cr
                4& 0 & 0 &  - k  & 0 & 0 & 0\cr
		5& 0 & 0 & 0 & 0  & 0 & k\cr
		6& 0 & 0 & 0 & 0  &  -k  & 0}}_{\Theta} \quad \phantom{\text{and~}}.
\]
}} \normalsize
Finally, we see

\vspace{-10pt}
\begin{minipage}{.25\linewidth}
{\footnotesize{
\begin{align*}
\widetilde{\Theta} + \Xi &= 
\bordermatrix{\text{}&1&2&3&4&5&6\cr
                1& 0 & k & 0 & 0 & 0 & -1\cr
                2&  -k  & 0 & -1 & 0 & 0 & 0\cr
                3& 0 & 1 & 0 & k & 0 & 0\cr
                4& 0 & 0 &  -k  & 0 & -1 & 0\cr
		5& 0 & 0 & 0 & 1  & 0 &  k\cr
		6& 1 & 0 & 0 & 0  &  -k & 0} \Longrightarrow 
\end{align*}}} \normalsize
\end{minipage}
\hspace{-55pt}
\begin{minipage}{.75\linewidth}
\begin{align*}
 \text{Pf}\big( \widetilde{\Theta} + \Xi \big) &= 8 + 4 \sqrt{5}~,\\
\beta^3  \text{Pf}\big( \widetilde{\Theta} + \Xi \big) &= \bigg( \frac{\sqrt{5}}{2} - \frac{1}{2} \bigg)^3 \big(  8 + 4 \sqrt{5} \big)~,\\
&= \mathbf{4}~.
\end{align*}
\end{minipage}

Thus, $val(\Gamma)$, as calculated by the Pfaffian circuit evaluation theorem, is equal to the number of satisfying solutions to the underlying combinatorial problem. We conclude by mentioning that if the number of edges in $\Gamma$ is odd, then $\text{Pf}(\widetilde{\Theta} + \Xi) = 0$, and there is no relation between the Pfaffian and $val(\Gamma)$. This explains why the Pfaffian circuit evaluation theorem only applies to planar, Pfaffian circuits with an \emph{even} number of edges (see \cite{morton_pfaff} for more detail).

%%%%%%%%%%%%%%%%%%%%%%%%%%%%%%%%%%%%%%%%%%%%%%%%%%
\section{Constructing Pfaffian Circuits via Algebraic Methods} \label{sec_alg_meth} A gate is \emph{Pfaffian} (after a heterogeneous change of basis) if there exists an $n \times n$ skew-symmetric matrix $\Xi$, an $\alpha \in \mathbb{C}$, and matrices $A_1,\ldots,A_n \in \mathbb{C}^{2 \times 2}$ such that $(A_1 \otimes \cdots \otimes A_n) G = \alpha\hspace{1pt}\text{sPf}(\Xi)$ (Def. \ref{def_pfaff_gcg}). In this section, we present an algebraic model of this property. In other words, given a gate $G$ (or cogate $C$), we present a system of polynomial equations such that the changes of bases $A_1,\ldots, A_n$ under which $G$ is Pfaffian are in bijection to the solutions of this system. If there are \emph{no} solutions, then there do not exist \emph{any} matrices $A_1,\ldots, A_n$ such that $(A_1 \otimes \cdots \otimes A_n) G = \alpha\hspace{1pt}\text{sPf}(\Xi)$, and $G$ is \emph{not} Pfaffian. We comment 
that there is a wealth of literature on pursuing combinatorial problems via systems of polynomial equations (see \cite{alonsurvey, alontarsi, susan_cpc, lovasz1, onn} and references therein).
%  The method is related to that originally used by Valiant \cite{valiant_qc}.

Throughout the rest of this paper, in order to prove that a given gate/cogate is Pfaffian, we simply demonstrate the $n \times n$ skew-symmetric matrix $\Xi$, the scalar $\alpha$, and the change of basis matrices $A_1,\ldots,A_n$ such that $G$ is Pfaffian, without further explanation. But we observe in advance that \emph{all} such ``Pfaffian certificates" (i.e., those displayed in Ex. \ref{ex_pfaff_gcg}) are found via solving the system of equations presented below. When we state that a given gate/cogate is \emph{not} Pfaffian, we specifically mean that the Gr\"obner basis of the system of polynomial equations described below (found via computation with \textsc{Singular}) consists of the single polynomial 1. Thus, the variety associated with the ideal is empty, and there are \emph{no} solutions. 

Before presenting the system of polynomial equations, we introduce the following notation. Let $[n]$ denote the set of integers $\{1,\ldots,n\}$. Given an integer $i \in [n]$ and a set $I \subseteq [n]$, the notation $I_i \in \{0,1\}$ denotes the $i$-th bit of the bitstring representation of $I$. For example, given $I = \{1,3\} \subseteq [4]$, then $|I\rangle$ is equivalent to $|1010\rangle$, with $I_1 = 1, I_2 = 0, I_3 = 1$ and  $I_4 = 0$. 
Given an $n$-arity gate $G = \sum_{I \subseteq [n]}G_I |I\rangle$, let $G' = (A_1 \otimes \cdots \otimes A_n)G$, and then the following formula defines the coefficient $G'_{I'}$ where $I' \subseteq [n]$:
\vspace{-5pt}
\begin{align}
G'_{I'} &= \sum_{I \subseteq [n]}G_{I}\prod_{i = 1}^nA_i[I'_i, I_i]~. \label{eq_g_cob}
\end{align}
\vspace{-10pt}
\begin{example} \label{ex_gcob_hom} Let $A$ be the matrix below and consider a generic 3-arity gate $G$:

\vspace{-5pt}
\begin{minipage}{.20\linewidth}
\[
A = \left[ \begin{array}{cc}
a_{00} & a_{01}\\[-1.0ex]
a_{10} & a_{11}
\end{array} 
\right]~, \quad \quad
\]
\end{minipage}
\begin{minipage}{.70\linewidth}
\begin{align*}
G &= G_{000}|000\rangle + G_{010}|010\rangle + G_{001}|001\rangle + G_{011}|011\rangle + G_{100}|100\rangle\\
&\phantom{=~} +G_{110}|110\rangle+ G_{101}|101\rangle +G_{111}|111\rangle~.
\end{align*}
\end{minipage}

\noindent Then, given $G' = A^{\otimes 3}G$, the coefficient of the ket $|100\rangle$ in $G'$, denoted $G'_{100}$, is
\vspace{-5pt}
\begin{align*}
G'_{100} &= G_{000}a_{10}a_{00}^2+G_{100}a_{11}a_{00}^2+G_{010}a_{10}a_{01}a_{00}+G_{001}a_{10}a_{00}a_{01} +G_{110}a_{11}a_{01}a_{00}\\
&\phantom{=}+G_{101}a_{11}a_{00}a_{01}+G_{011}a_{10}a_{01}^2+G_{111}a_{11}a_{01}^2~.
\end{align*} 

\vspace{-20pt}
\hfill $\Box$
\end{example}

Finally, given $I \subseteq [n]$ with $|I|$ even and $|I| \geq 4$, let $I_{\min}$ be the smallest integer in $I$.
\begin{theorem} \label{thm_pfaffian_gates_cob} Let $G  = \sum_{I \subseteq [n]}G_{I} |I\rangle$ be an $n$-arity gate,  and let $G' = (A_1 \otimes \cdots \otimes A_n)G$ with $G'_{I'}$ defined by Eq. \ref{eq_g_cob}. Then, the solutions to the following system of polynomial equations are in bijection to the changes of bases $A_1,\ldots,A_n$ such that $G$ is Pfaffian.
\begin{align}
\big(G'_{\emptyset}\big)^{-1}{G'}_{I'} &= 0, \hspace{148pt}\text{~$\forall$ $I' \subseteq [n]$ with $|I'|$ odd}~, \label{eq_parity}
\end{align}

\vspace{-20pt}
\begin{align}
\big(G'_{\emptyset}\big)^{-1}{G'}_{I'} &= \sum_{i \in {I'} \setminus {I'}_{\min}} (-1)^{{I'}_{\min}+ i + 1} \Big(\big(G'_{\emptyset}\big)^{-1} \Big)^2G'_{{I'}_{\min}, i}{G'}_{{I'} \setminus \{ {I'}_{\min}, i \}}~, \nonumber\\
& \hspace{105pt}\text{~$\forall$ $I' \subseteq [n]$ with $|I'|$ even and $|I'| \geq 4$}~, \label{eq_consist}
\end{align}

\vspace{-20pt}
\begin{align}
\sum_{I \subseteq [n]}G_{I}\prod_{i = 1}^nA_i[0, I_i] - G'_{\emptyset} &=0~, \label{eq_em}\\ G'_{\emptyset}\big(G'_{\emptyset}\big)^{-1} - 1 = 0~, \label{eq_eminv}\\
\det(A_i)\big(\det(A_i)\big)^{-1} - 1 &=0, \text{~for~$1 \leq i \leq n$}~, \label{eq_ainv}
\end{align}
\end{theorem}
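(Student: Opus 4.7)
The plan is to unpack the defining equation $(A_1 \otimes \cdots \otimes A_n)G = \alpha\,\operatorname{sPf}(\Xi)$ coordinate by coordinate. Writing $G' := (A_1 \otimes \cdots \otimes A_n)G$, so that $G'_{I'}$ is given by Eq.~\ref{eq_g_cob}, this single tensor equation is equivalent to the family of scalar identities
\begin{equation*}
G'_{I'} \;=\; \alpha\,\operatorname{Pf}(\Xi|_{I'}), \qquad I' \subseteq [n].
\end{equation*}
Specializing to $I' = \emptyset$ pins down $\alpha = G'_\emptyset$, and specializing to two-element sets $I' = \{i,j\}$ with $i<j$ pins down $\xi_{ij} = G'_{ij}/G'_\emptyset$. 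Thus $\alpha$ and $\Xi$ are uniquely determined by $(A_1,\ldots,A_n)$, and the desired bijection sends a Pfaffian change of basis to the unique solution of the system that carries the associated values of the auxiliary variables $G'_\emptyset$, $(G'_\emptyset)^{-1}$, and the $(\det A_\ell)^{-1}$.

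For the forward direction, I would assume $G$ is Pfaffian with data $(\Xi,\alpha,A_1,\ldots,A_n)$ (with each $A_\ell$ necessarily invertible for the change of basis to be genuine) and derive each listed equation in turn. Eq.~\ref{eq_em} is Eq.~\ref{eq_g_cob} specialized to $I' = \emptyset$ together with the naming $\alpha = G'_\emptyset$. Eqs.~\ref{eq_eminv} and \ref{eq_ainv} are Rabinowitsch-style saturations encoding the nonvanishing of $G'_\emptyset$ and of each $\det A_\ell$. Eq.~\ref{eq_parity} records the classical fact that $\operatorname{Pf}$ vanishes on odd-sized skew-symmetric matrices. Finally, Eq.~\ref{eq_consist} is the Pfaffian Laplace expansion of $\operatorname{Pf}(\Xi|_{I'})$ along the row indexed by $I'_{\min}$, after substituting $\xi_{I'_{\min},i} = G'_{I'_{\min},i}/G'_\emptyset$ and $\operatorname{Pf}(\Xi|_{I'\setminus\{I'_{\min},i\}}) = G'_{I'\setminus\{I'_{\min},i\}}/G'_\emptyset$.

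For the reverse direction, given a solution of the system I would set $\alpha := G'_\emptyset$ and define a skew-symmetric $\Xi$ by $\xi_{ij} := G'_{ij}/\alpha$ for $i<j$ (extended antisymmetrically with zero diagonal), and then verify $G'_{I'} = \alpha\,\operatorname{Pf}(\Xi|_{I'})$ for all $I' \subseteq [n]$ by induction on $|I'|$. The cases $|I'| \in \{0,2\}$ hold by construction; odd $|I'|$ is handled by Eq.~\ref{eq_parity}; and the even case $|I'| \geq 4$ reduces, upon applying Pfaffian Laplace expansion to the right-hand side and invoking the inductive hypothesis, to precisely Eq.~\ref{eq_consist}. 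Because Eq.~\ref{eq_ainv} forces each $A_\ell$ to be invertible, this yields a genuine Pfaffian representation of $G$ under the change of basis $(A_1,\ldots,A_n)$.

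The principal obstacle I anticipate is correctly handling the sign convention in Eq.~\ref{eq_consist}. The Pfaffian Laplace expansion is most commonly written with signs governed by the positions of indices within the ordered row-index set of the submatrix $\Xi|_{I'}$, whereas Eq.~\ref{eq_consist} uses the absolute indices via the factor $(-1)^{I'_{\min}+i+1}$. Reconciling these requires pinning down exactly which convention is in force for $\operatorname{sPf}$ in Def.~\ref{def_sPf}, and then verifying that the Laplace expansion carries through under that convention; this is essentially a parity argument showing that the shift from relative to absolute indexing contributes the same parity in both the row and column of $\xi_{I'_{\min},i}$ and hence cancels. Once this sign bookkeeping is dispatched, the remaining verifications are routine substitutions and induction.
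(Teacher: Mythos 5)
The paper itself does not give a formal proof of this theorem; after the statement it only sketches, in prose, what each family of equations encodes (inversion, parity, and consistency, the last via Laplace expansion of the Pfaffian). Your proof plan is therefore exactly the argument the paper gestures at: identify $\alpha = G'_{\emptyset}$ and $\xi_{ij} = G'_{ij}/G'_{\emptyset}$, show the Pfaffian Laplace expansion of $\text{Pf}(\Xi|_{I'})$ reproduces Eq.~\ref{eq_consist} in the forward direction, and build $\Xi$ from a solution and verify $G'_{I'} = \alpha\,\text{Pf}(\Xi|_{I'})$ by induction on $|I'|$ for the converse, with Eqs.~\ref{eq_eminv} and~\ref{eq_ainv} as Rabinowitsch saturations. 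So far so good.

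The gap is precisely the sign issue you flag, and your proposed resolution does not hold up. The Laplace expansion of $\text{Pf}(\Xi|_{I'})$ along the row $I'_{\min}$ of the \emph{submatrix} carries the sign $(-1)^{j}$ where $j$ is the \emph{local} position of the element $i$ in the sorted $I'$, not a function of the absolute value of $i$. These two only coincide when $I'$ is a contiguous block of integers starting at $I'_{\min}$, which is the only case the paper tests (its $6\times6$ example uses $I'=\{1,\dots,6\}$). Take instead $I'=\{1,2,4,5\}$: the Laplace expansion of $\text{Pf}(\Xi|_{\{1,2,4,5\}}) = \xi_{12}\xi_{45} - \xi_{14}\xi_{25} + \xi_{15}\xi_{24}$ assigns signs $+,-,+$ to the terms indexed by $i=2,4,5$, while $(-1)^{I'_{\min}+i+1} = (-1)^{i}$ here gives $+,+,-$. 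Your claim that ``the shift from relative to absolute indexing contributes the same parity in both the row and column'' is not right --- $\xi_{I'_{\min},i}$ occupies a single row and a single column, and the discrepancy between $i$ and $\text{pos}(i)$ equals the number of integers below $i$ missing from $I'$, which is an independent quantity that does not cancel. Since the forward direction of the bijection would then fail for such $I'$, you must either (a) reinterpret the $i$ in the exponent of Eq.~\ref{eq_consist} as the local position (at odds with its meaning elsewhere in that same formula), (b) replace the exponent by $\text{pos}(i)$, or (c) argue the two sign systems produce the same polynomial ideal. None of these is done, and without one the proof is incomplete.
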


\vspace{-3pt}
We refer to Eqs.  \ref{eq_eminv} and \ref{eq_ainv} as the ``inversion" equations, since Eq. \ref{eq_ainv} insures that each of the matrices $A_1,\ldots,A_n$ are invertible, and Eq. \ref{eq_eminv} insures that the coefficient for $G'_{\emptyset}$ can be scaled to equal one (since the Pfaffian of the empty matrix is equal to one by definition). This scaling factor appears again in Eqs. \ref{eq_parity} and \ref{eq_consist}, taking into account that each coefficient $G'_{I'}$ is scaled by $\alpha$ since $(A_1\otimes \cdots \otimes A_n)G = \alpha \hspace{1pt}\text{sPf}(\Xi)$. Eq. \ref{eq_em} is simply the equation for $G'_{\emptyset}$ according to the change of basis formula given in Eq. \ref{eq_g_cob}.
We refer to Eqs. \ref{eq_parity} and \ref{eq_consist} as the ``parity" and ``consistency" equations, respectively. The parity equations capture the condition that the Pfaffian of an $n \times n$ matrix with $n$ odd is zero. To understand the ``consistency" equations, recall that the Pfaffian of a given $n \times n$ matrix with $n$ even can be recursively written in terms the Pfaffians of each of the $(n - 1) \times (n-1)$ submatrices. For example, the Pfaffian of $6 \times 6$ matrix $A$ can be expressed as $a_{12}\text{Pf}(A|_{3456}) - a_{13}\text{Pf}(A|_{2456}) + a_{14}\text{Pf}(A|_{2356}) - a_{15}\text{Pf}(A|_{2346}) + a_{16}\text{Pf}(A|_{2345})$. In this case, observe that $I' = \{1,2,3,4,5,6\}$, and $I'_{\min} = 1$. 
%We note that this model is non-linear, since both the ``inversion" and ``consistency" equations have degree two, and thus is NP-hard to solve. %These specific equations are NP hard?
Though Theorem \ref{thm_pfaffian_gates_cob} refers gates,  there is an analogous theorem for cogates. 
\begin{example} \label{ex_g0111_gb} Let $G = \text{OR} = |10\rangle + |01\rangle + |11\rangle$, and let $G' = (A \otimes B)G$. Then, the system of equations associated with Theorem \ref{thm_pfaffian_gates_cob} is as follows:
\begin{align*}
% string s = "0111 |10> + |01> + |11>";
% 1 - 4 are gate variables
% 5-8 is matrix A, 9-12 are B
% 13-14 is det(A), 15-16 is B
% 17-18 is empty matrix
\big({G'}_{\emptyset}\big)^{-1}(a_{11}b_{00}+a_{10}b_{01}+a_{11}b_{01})=
\big({G'}_{\emptyset}\big)^{-1}(a_{01}b_{10}+a_{00}b_{11}+a_{01}b_{11})=0~,\\
a_{01}b_{00}+a_{00}b_{01}+a_{01}b_{01}-{G'}_{\emptyset} =
a_{00}a_{11}-a_{01}a_{10}-\det(A) =
b_{00}b_{11}-b_{01}b_{10}-\det(B)=0~,\\
{G'}_{\emptyset}\big({G'}_{\emptyset}\big)^{-1}-1=0~, \quad
\det(A)\big(\det(A)\big)^{-1}-1=0~, \quad
\det(B)\big(\det(B)\big)^{-1}-1=0~.
%"g2_cg0_het_eval_0111.tex" 13L, 579C written
%-bash-4.1$ pwd
%/home/faculty/margulies/maple_code/Pfaffian/sig_files1/g2_cg0_het
\end{align*}
Note that the change of basis in Ex. \ref{ex_pfaff_gcg} is indeed a solution to this system of  equations.
\end{example}

The next two sections detail the results of computing with these algebraic models.

%%%%%%%%%%%%%%%%%%%%%%%%%%%%%%%%%%%%%%%%%%%%%%%%%%
\section{Boolean Variables and Tensor Contraction Networks} \label{sec_bool_ten} In the field of combinatorial optimization, there are an almost countless number of problems (independent set, dominating set, bin packing, partition, satisfiability, etc.) that can be modeled as a series of equations in which the variables are restricted to 0/1 values (also called \emph{binary} or \emph{Boolean variables}).  In a generalized counting constraint satisfaction setting, we cannot assume that we are allowed to swap variables or copy them (indeed the latter is impossible in the quantum setting).  Thus to demonstrate the applicability of Pfaffian circuits to 0/1 \#CSP  problems, we must first demonstrate a planar, Pfaffian tensor representation of a variable that is either zero or one in every constraint where it appears.   Such a tensor may only be avaiable for restricted arities; hence the popularity of restrictions such as ``read-twice'' in the literature on holographic algorithms.  In this section, we explore two 
different representations of Boolean variables, the first utilizing a homogeneous change of basis, and the second possible under a heterogeneous change of basis only. We compare and contrast these two constructions, focusing in particular on categorizing the gates/cogates that {\em pair with} these representations, since these collections represent a class of 0/1 \#CSP problems solvable in polynomial time.

In order to model a 0/1 \#CSP problem as a tensor contraction network, we first recall the standard variable/constraint graph (see Fig. \ref{fig_var_con}.a), which is a bipartite graph with variables above and constraints below, and an edge between a variable and constraint if a given variable appears in a particular constraint. In order to translate this graph to a Pfaffian circuit, we simply model variables as cogates and constraints as gates (see Fig. \ref{fig_var_con}.b), or vice versa.
\begin{figure}[!h]
\vspace{-18pt} 
\hspace{50pt}\begin{minipage}{.4\linewidth}
\begin{center}
% trim=l b r t
\includegraphics[scale=.22,clip=true,trim=60 502 140 55]{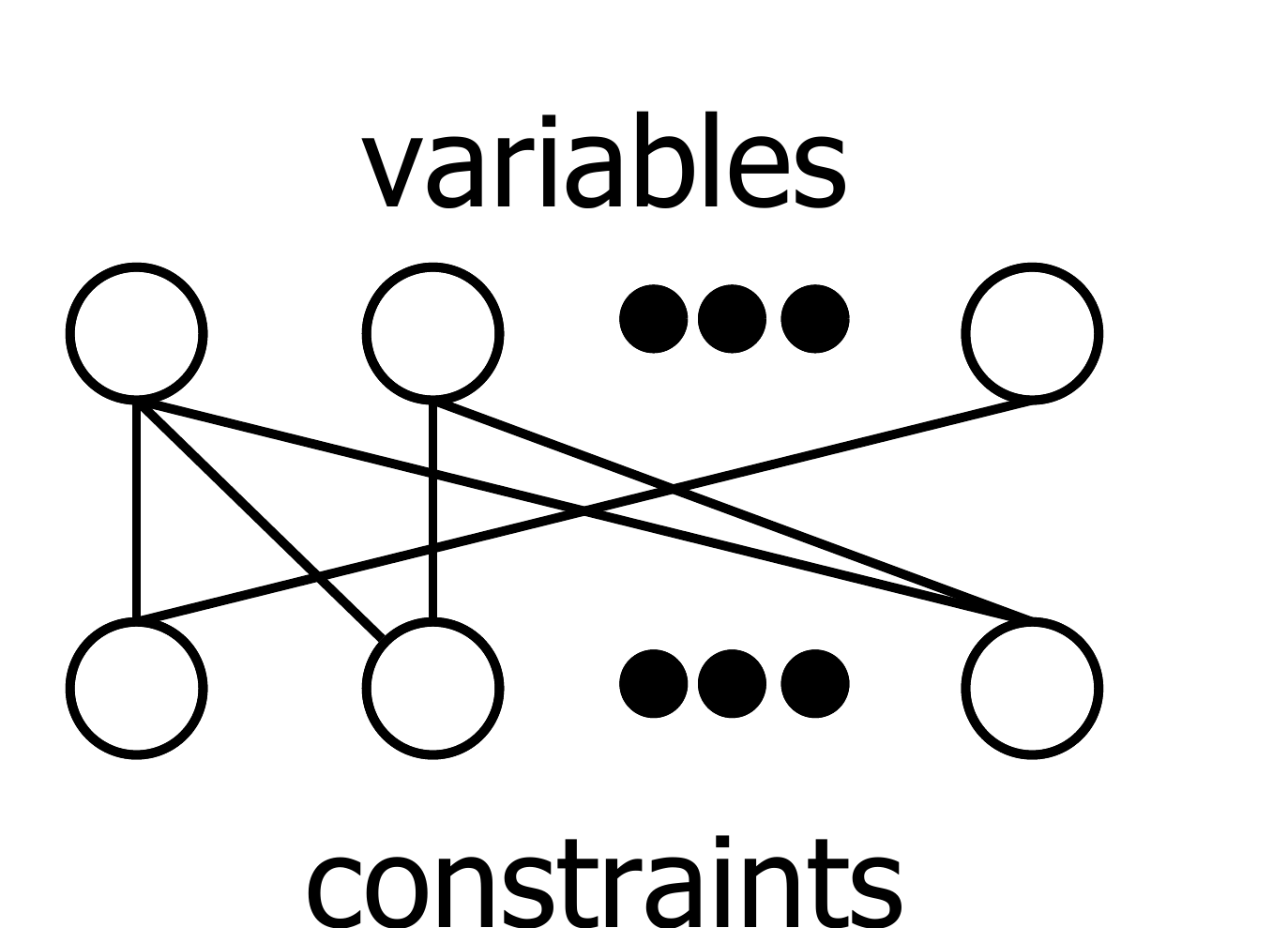}
\end{center}
\vspace{-1pt}
%\caption*{(a)}
\hspace{70pt}(a)
\end{minipage}
\begin{minipage}{.4\linewidth}
\begin{center}
% trim=l b r t
\includegraphics[scale=.22,clip=true,trim=60 502 140 55]{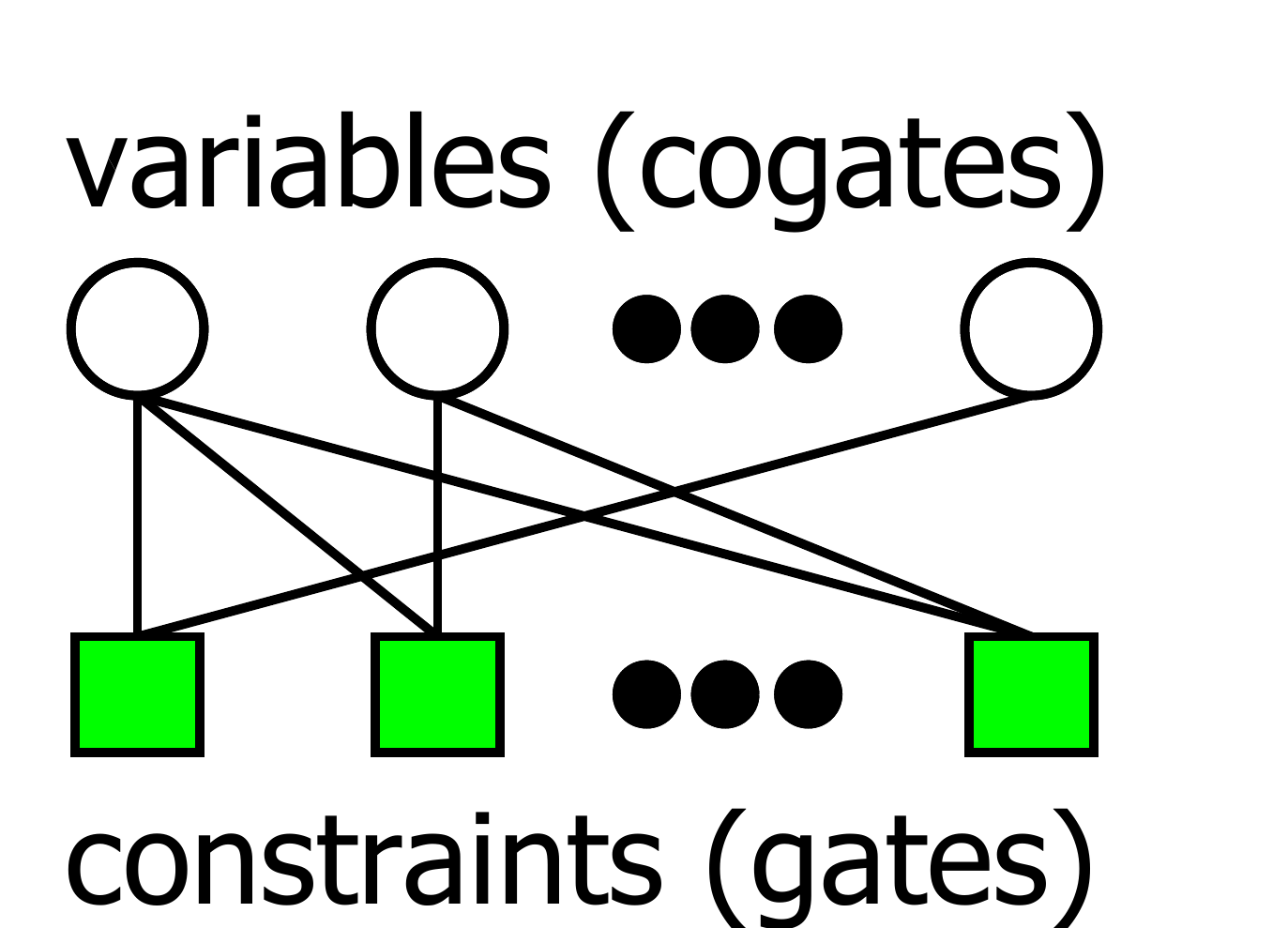}
\end{center}
\vspace{-1pt}
%\caption*{(b)}
\hspace{65pt}(b)
\end{minipage}
\vspace{-5pt}
\caption{Variable/constraint graph as a tensor contraction network.} \label{fig_var_con}
\end{figure}

%\vspace{-15pt}
Following the graphs presented in Fig. \ref{fig_var_con}, we need a tensor that models the property that if a variable is set to zero, it is simultaneously zero in every constraint in which it appears (and similarly for one). The obvious suggestion for modeling a Boolean variable as a tensor is the following gate or cogate, commonly denoted as the \textsc{equal} gate/cogate:
\vspace{-15pt}
\begin{align*}
|00\cdots 0 \rangle + |11\cdots 1 \rangle~, \quad \quad \langle 00\cdots 0| + \langle 11\cdots 1 |~.
\end{align*}
\vspace{-15pt}
\begin{center}
% trim=l b r t
\includegraphics[scale=.10,page=1,clip=true,trim=0 485 0 81]{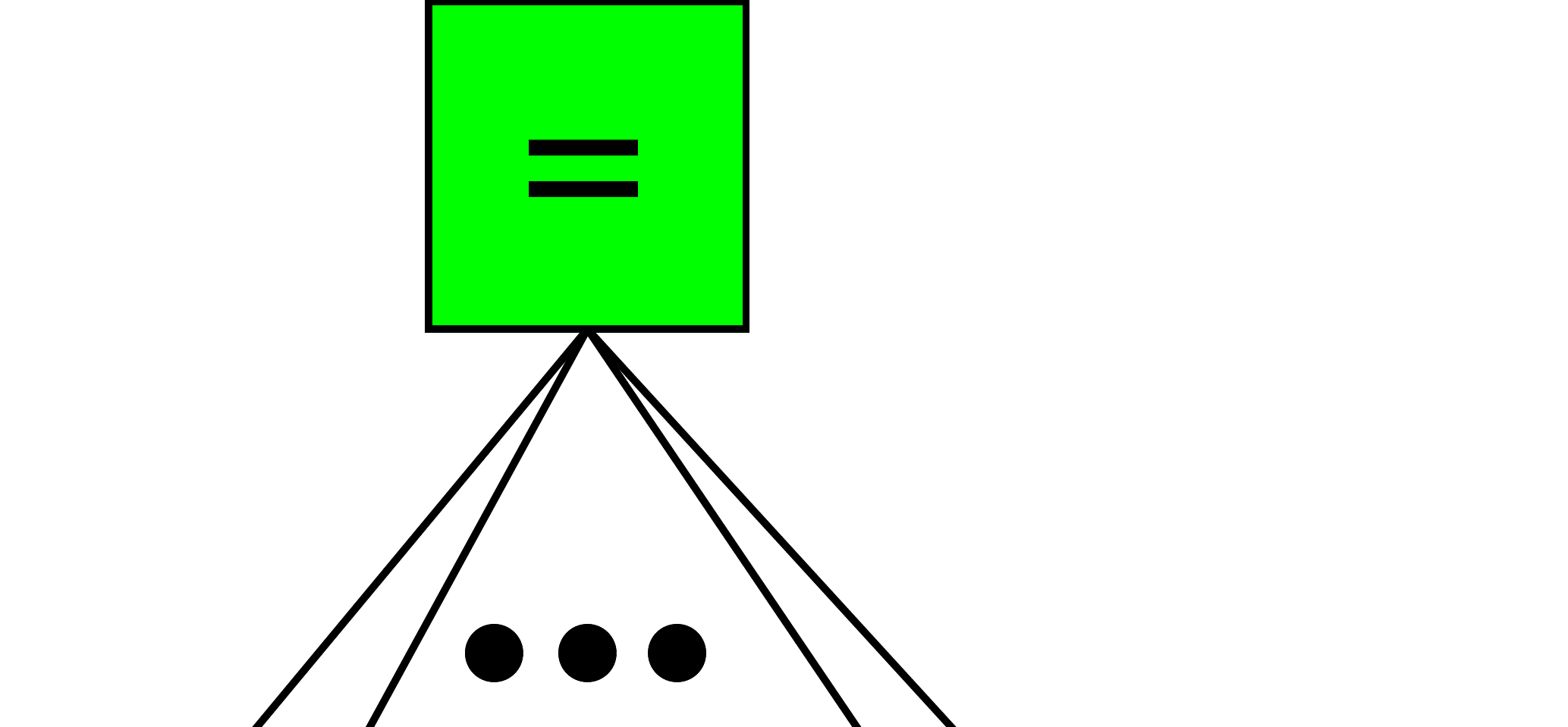}
\includegraphics[scale=.10,page=2,clip=true,trim=0 485 0 81]{bool_var.pdf}
\includegraphics[scale=.10,page=3,clip=true,trim=0 485 0 81]{bool_var.pdf}
\end{center}
%\vspace{-10pt}
Consider the gate $|0\cdots 0\rangle + |1\cdots 1 \rangle$. If any edge is set to $|0\rangle$, then every edge is set to $|0\rangle$, and similarly for $|1\rangle$ (see Ex. \ref{ex_bool_var_con}).  Observe that gate $(|0\rangle + |1\rangle)^{\otimes n}$ does not satisfy this property.
\begin{example} \label{ex_bool_var_con} Consider the following tensor contraction network fragment, containing gate $G_1$ and cogates $C_1$ and $C_2$, where edges $1,\ldots,6$ are labeled, and $7,\ldots,m$ are unlabeled.

\begin{minipage}{.30\linewidth}
\begin{center}
% trim=l b r t
\includegraphics[scale=.22,page=1,clip=true,trim=0 475 0 85]{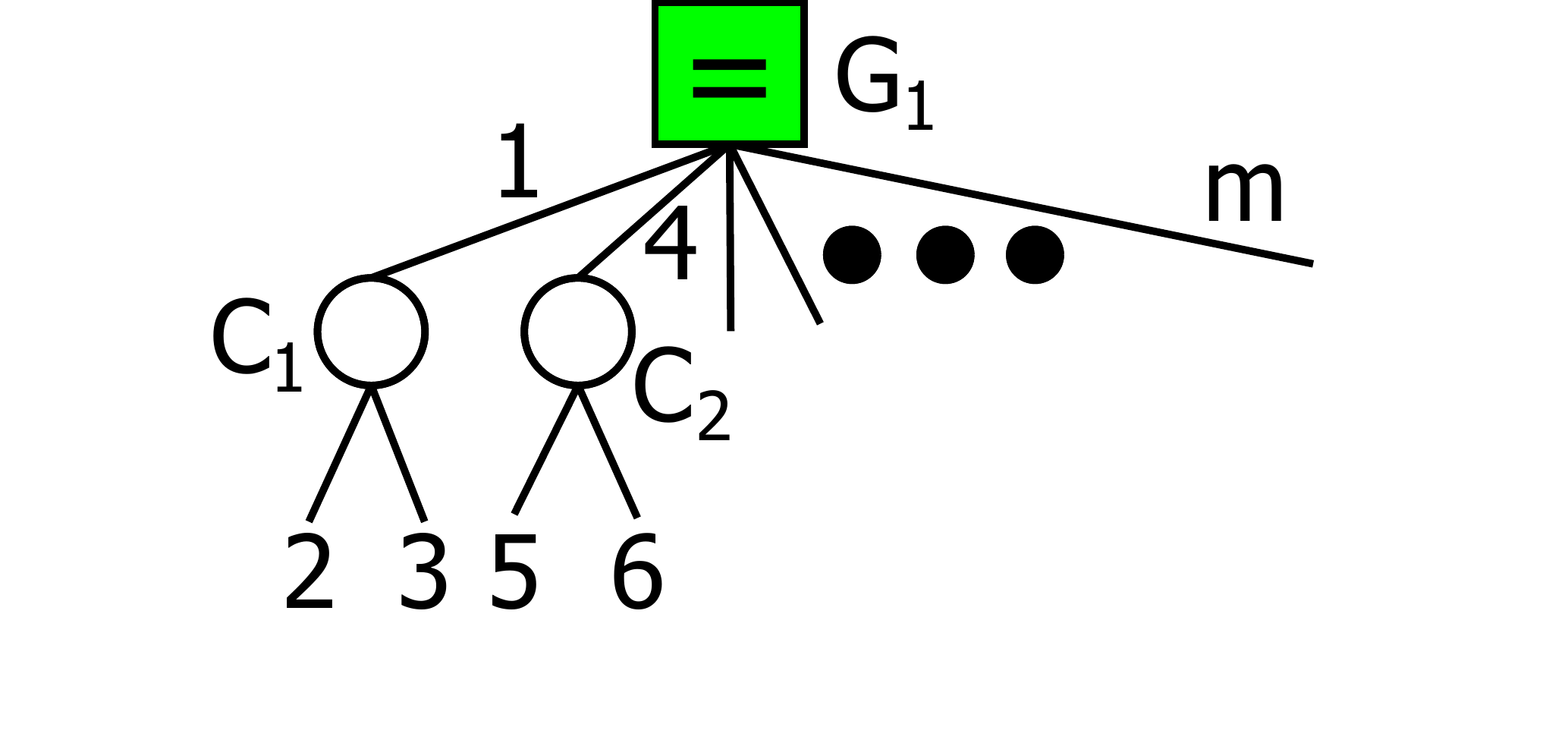}
\end{center}
\end{minipage}
\hspace{-10pt}\begin{minipage}{.70\linewidth}
\vspace{-15pt}
\begin{align*}
G_1 &= |0_10_4 \cdots 0_m \rangle + |1_11_4 \cdots 1_m\rangle~,\\
C_1 &= \langle 0_10_21_3| + \langle 0_11_20_3| + \langle 0_11_21_3|~,\\
C_2 &= \langle 0_41_51_6| + \langle 1_40_50_6|~.
\end{align*}
\end{minipage}
\vspace{-5pt}
While computing $val(\Gamma)$, the contraction $C_1 \otimes G_1$ is performed:
\begin{align*}
\Big \langle \big(\langle 0_10_21_3| + \langle 0_11_20_3| + \langle 0_11_21_3| \big) \Big | \big(|0_10_4 \cdots 0_m \rangle + |1_11_4 \cdots 1_m\rangle \big) \Big \rangle~.
\end{align*}
Since $\langle 0_1 | 1_1 \rangle =0$, the partial contraction
$\Big \langle \langle 0_10_21_3| + \langle 0_11_20_3| + \langle 0_11_21_3| \Big | |1_11_4 \cdots 1_m\rangle \Big \rangle~=~0$. Therefore, the only non-zero terms arise from $\Big \langle \big(\langle 0_10_21_3| + \langle 0_11_20_3| + \langle 0_11_21_3| \big) \Big | \big( |0_10_4 \cdots 0_m \rangle \big) \Big \rangle$, since $\langle 0_1 | 0_1\rangle = 1$. Thus, even though cogate $C_2$ contains bra $\langle 1_40_50_6|$, that combination is not counted as a solution since $\langle 1_40_50_6|0_10_4 \cdots 0_m\rangle = 0$. Thus, for \emph{any} tensor contraction network containing $G_1$ (or a similar boolean tensor), if any edge of $G_1$ is set to $|0\rangle$ (or $|1\rangle$), all the edges of $G_1$ are set to $|0\rangle$ (or $|1\rangle$, respectively). \hfill $\Box$
\end{example}

Throughout this section, we refer to $|00\cdots 0\rangle + |11\cdots 1 \rangle$ as the \emph{n-arity  \textsc{equal} gate} (or cogate, respectively).
% Recalling that a gate is denoted by a box ($\Box$) and a cogate is denoted by a circle ($\circ$), the Boolean tensors for $n \leq 3$ are denoted as follows:
%\begin{center}
%% trim=l b r t
%\includegraphics[scale=.22,clip=true,trim=0 600 0 85]{bool_ten.pdf}\hspace{-45pt}
%\includegraphics[scale=.22,page=2,clip=true,trim=0 600 0 85]{bool_ten.pdf}\hspace{-45pt}
%\includegraphics[scale=.22,page=3,clip=true,trim=0 600 0 85]{bool_ten.pdf}
%\end{center}
The following is a variant of the Hadamard basis of \cite{valiant_qc}. 

\begin{proposition}  \label{prop_bool_gcg} The $n$-arity \textsc{equal} gate (or cogate, respectively) is Pfaffian under the homogeneous change of basis $(A \otimes \cdots \otimes A)$ (or $(B^{-1} \otimes \cdots \otimes B^{-1})$, respectively).

\vspace{-5pt}
\begin{minipage}{0.10\linewidth}
\[
A = \left[\begin{array}{cc}
1 & 1\\[-1ex]
1 & -1
\end{array} \right]
%"check_pfaffian" 1021L, 32561C written
%/home/faculty/margulies/maple_code/Pfaffian/sig_files1/gi_code/bool_ten
\]
\end{minipage}
\begin{minipage}{0.25\linewidth}
\begin{center}
% trim=l b r t
\includegraphics[scale=.15,clip=true,trim=75 480 200 55]{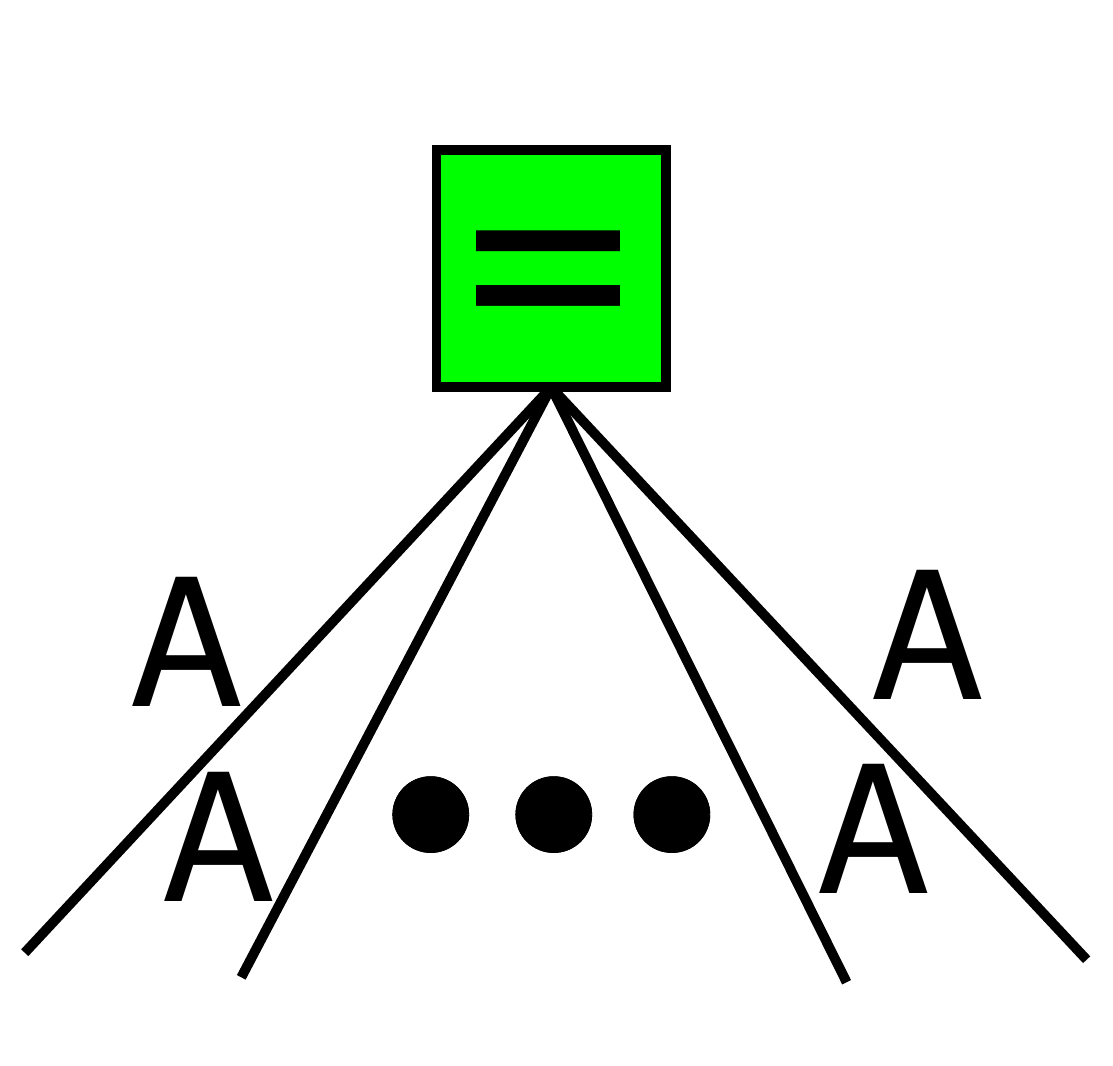}
\end{center}
\end{minipage}
\hspace{-35pt}\begin{minipage}{0.50\linewidth}
\[
,~\text{and}~B = \left[\begin{array}{cc}
1 & -1\\[-1ex]
1/2 & 1/2
\end{array} \right]~,~B^{-1} = \left[\begin{array}{cc}
1/2 & 1\\[-1ex]
-1/2 & 1
\end{array} \right]~,
%A := [1,-1,1/2,1/2];
%"check_pfaffian" 1021L, 32561C written
%/home/faculty/margulies/maple_code/Pfaffian/sig_files1/gi_code/bool_ten
\]
\end{minipage}
\begin{minipage}{0.15\linewidth}
\begin{center}
% trim=l b r t
\includegraphics[scale=.15,clip=true,trim=0 480 0 75]{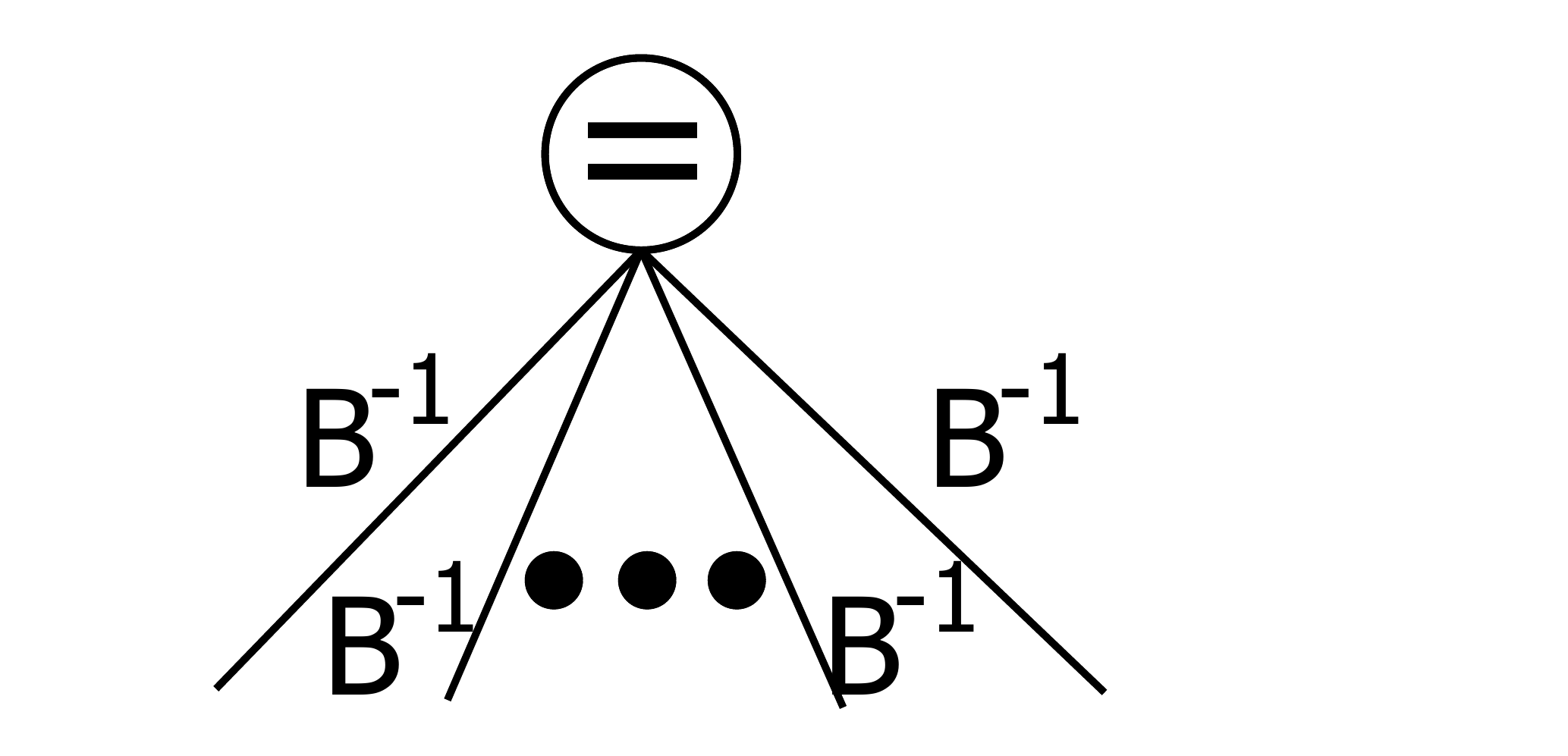}
\end{center}
\end{minipage}

%\begin{minipage}{0.35\linewidth}
%\[
%A = \left[\begin{array}{cc}
%1 & 1\\
%1 & -1
%\end{array} \right]~.
%%"check_pfaffian" 1021L, 32561C written
%%/home/faculty/margulies/maple_code/Pfaffian/sig_files1/gi_code/bool_ten
%\]
%\begin{center}
%% trim=l b r t
%\includegraphics[scale=.15,clip=true,trim=0 480 175 55]{bool_var_g_hom.pdf}
%\end{center}
%\end{minipage}
%\hspace{-50pt}\begin{minipage}{0.75\linewidth}
%\[
%B = \left[\begin{array}{cc}
%1 & -1\\
%1/2 & 1/2
%\end{array} \right]~, \quad B^{-1} = \left[\begin{array}{cc}
%1/2 & 1\\
%-1/2 & 1
%\end{array} \right]~,
%%A := [1,-1,1/2,1/2];
%%"check_pfaffian" 1021L, 32561C written
%%/home/faculty/margulies/maple_code/Pfaffian/sig_files1/gi_code/bool_ten
%\]
%\begin{center}
%% trim=l b r t
%\includegraphics[scale=.15,clip=true,trim=0 420 0 75]{bool_var_cg_hom.pdf}
%\end{center}
%\end{minipage}
\end{proposition}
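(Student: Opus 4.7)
The plan is to verify Definition~\ref{def_pfaff_gcg} directly for both halves of the proposition by exhibiting explicit skew-symmetric matrices and scaling constants. For the gate, note that $A|0\rangle = |0\rangle + |1\rangle$ and $A|1\rangle = |0\rangle - |1\rangle$, so by distributivity of the tensor product,
\begin{align*}
A^{\otimes n}\bigl(|0\cdots 0\rangle + |1\cdots 1\rangle\bigr)
 &= (|0\rangle+|1\rangle)^{\otimes n} + (|0\rangle-|1\rangle)^{\otimes n}
 = 2\sum_{\substack{I\subseteq[n]\\|I|\text{ even}}}|I\rangle,
\end{align*}
since odd-weight kets in the two expansions cancel. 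Matching this against $\alpha\,\text{sPf}(\Xi) = \alpha\sum_I \text{Pf}(\Xi|_I)|I\rangle$ reduces the gate case to producing a skew-symmetric $\Xi$ whose principal sub-Pfaffians equal $1$ on every even-sized subset, and then setting $\alpha = 2$.

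The only substantive step is this Pfaffian identity. I would take $\Xi$ to be the ``constant'' skew-symmetric matrix with $\xi_{ij} = 1$ for every $i<j$; this is an especially convenient choice because every principal submatrix $\Xi|_I$ is of the same constant form on $|I|$ indices. Sub-Pfaffians of odd size vanish automatically by Def.~\ref{def_sPf}, so it remains to show $\text{Pf}(\Xi|_I) = 1$ whenever $|I|$ is even. This follows by induction on $|I|$ via Laplace expansion: the cases $|I|\in\{0,2\}$ are immediate, and for $I = \{i_1 < \cdots < i_m\}$ with $m$ even,
\begin{align*}
\text{Pf}(\Xi|_I)
  = \sum_{k=2}^{m}(-1)^{k}\,\xi_{i_1,i_k}\,\text{Pf}\bigl(\Xi|_{I\setminus\{i_1,i_k\}}\bigr)
  = \sum_{k=2}^{m}(-1)^{k}
  = 1,
\end{align*}
because every off-diagonal entry is $1$, every smaller sub-Pfaffian is $1$ by induction, and the alternating sum of $m-1$ (an odd number of) terms $+1,-1,+1,\dots,+1$ collapses to $1$. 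This gives $A^{\otimes n}(|0\cdots 0\rangle + |1\cdots 1\rangle) = 2\,\text{sPf}(\Xi)$, proving the gate half.

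The cogate is handled by the same two ideas transported to bras. Following the paper's convention that applying a matrix $M$ to $\langle j|$ produces the $j$-th row of $M$ as a bra, we have $B^{-1}\langle 0| = \tfrac12\langle 0| + \langle 1|$ and $B^{-1}\langle 1| = -\tfrac12\langle 0| + \langle 1|$. Expanding the tensor powers and cancelling exactly as in the gate case,
\begin{align*}
(B^{-1})^{\otimes n}\bigl(\langle 0\cdots 0| + \langle 1\cdots 1|\bigr)
  = 2\sum_{\substack{J\subseteq[n]\\n-|J|\text{ even}}}\bigl(\tfrac12\bigr)^{n-|J|}\langle J|,
\end{align*}
which for the even $n$ required by Theorem~\ref{thm_pfaff_eval} is supported on even-weight bras. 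To match this against $\beta\,\text{sPf}^*(\Theta) = \beta\sum_J \text{Pf}(\Theta|_{J^C})\langle J|$, I would take $\Theta$ to be the constant skew-symmetric matrix with $\theta_{ij} = 1/4$ for every $i<j$; homogeneity of the Pfaffian (each monomial involves $|K|/2$ matrix entries) combined with the identity just established yields $\text{Pf}(\Theta|_K) = (1/4)^{|K|/2} = (1/2)^{|K|}$ for even $|K|$. Setting $K = J^C$ and $\beta = 2$ reproduces the displayed coefficients, completing the proof. The hard part is thus concentrated in the single inductive Pfaffian calculation for the constant matrix; everything else is a direct tensor-product expansion.
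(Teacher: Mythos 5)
Your proof is correct and exhibits the same certificate as the paper: the constant skew-symmetric matrices with off-diagonal entries $1$ (resp. $1/4$) and scalars $\alpha=\beta=2$. The paper simply displays these matrices as ``Pfaffian certificates'' without further justification, while you additionally supply the verification—the tensor-power expansion showing the transformed gate/cogate is $2\sum_{|I| \text{ even}}|I\rangle$ (resp.\ the analogous bra sum), and the inductive Laplace computation establishing that every even-sized principal sub-Pfaffian of the all-ones matrix equals $1$—so this is the same approach, with the implicit calculation made explicit.
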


%\begin{proposition}  \label{prop_bool_gcg}
%$\phantom{xxx}$
%\begin{enumerate}
%	\item The $n$-arity \textsc{equal} gate is Pfaffian under the homogeneous change of basis $(A \otimes \cdots \otimes A)$:
%
%\hspace{60pt}\begin{minipage}{0.30\linewidth}
%\[
%A = \left[\begin{array}{cc}
%1 & 1\\
%1 & -1
%\end{array} \right]~.
%%"check_pfaffian" 1021L, 32561C written
%%/home/faculty/margulies/maple_code/Pfaffian/sig_files1/gi_code/bool_ten
%\]
%\end{minipage}
%\hspace{-70pt}\begin{minipage}{0.65\linewidth}
%\begin{center}
%% trim=l b r t
%\includegraphics[scale=.15,clip=true,trim=0 480 0 55]{bool_var_g_hom.pdf}
%\end{center}
%\vspace{5pt}
%\end{minipage}
%	\item The $n$-arity \textsc{equal} cogate is Pfaffian under the homogeneous change of basis $(B^{-1} \otimes \cdots \otimes B^{-1})$:
%
%\begin{minipage}{0.60\linewidth}
%\[
%B = \left[\begin{array}{cc}
%1 & -1\\
%1/2 & 1/2
%\end{array} \right]~, \quad B^{-1} = \left[\begin{array}{cc}
%1/2 & 1\\
%-1/2 & 1
%\end{array} \right]~,
%%A := [1,-1,1/2,1/2];
%%"check_pfaffian" 1021L, 32561C written
%%/home/faculty/margulies/maple_code/Pfaffian/sig_files1/gi_code/bool_ten
%\]
%\end{minipage}
%\hspace{-25pt}\begin{minipage}{0.35\linewidth}
%\begin{center}
%% trim=l b r t
%\includegraphics[scale=.15,clip=true,trim=0 420 0 75]{bool_var_cg_hom.pdf}
%\end{center}
%\end{minipage}
%\end{enumerate}
%\end{proposition}
\begin{proof} We provide a generalized form for the matrices $\Xi$ and $\Theta$ in the ``Pfaffian certificates" $(A\otimes \cdots \otimes A)\big(|00\cdots 0\rangle + |11 \cdots 1\rangle\big) = 2\hspace{1pt}\text{sPf}(\Xi)$ and $(B^{-1}\otimes \cdots \otimes B^{-1})\big(\langle 00 \cdots 0| + \langle 11 \cdots 1|\big) = 2\hspace{1pt}\text{sPf}^{*}(\Theta)$.
\vspace{-12pt}
\begin{align*}
(A\otimes \cdots \otimes A)\big(|00\cdots 0\rangle + |11 \cdots 1\rangle\big) &= 2\hspace{1pt}\text{sPf}\left(\left[ {\footnotesize \begin{array}{ccccc}
 0 & 1 & \cdots & \cdots & 1\\[-1.5ex]
-1 & 0 & 1 & \cdots & 1\\[-1.5ex]
\vdots & -1 &\ddots & \ddots & \vdots \\[-1.5ex]
\vdots&  & \ddots & 0 & 1\\[-1.5ex]
-1 & -1 & \cdots & -1 & 0
\end{array}}\right]\right)~, \\
%"out_bool_ten_g_1_8" 321L, 16268C written
%-bash-4.1$ pwd
%/home/faculty/margulies/maple_code/Pfaffian/sig_files1/gi_code/bool_ten
(B^{-1}\otimes \cdots \otimes B^{-1})\big(\langle 00 \cdots 0| + \langle 11 \cdots 1|\big) &= 2\hspace{1pt}\text{sPf}^{*}\left(\left[ {\footnotesize\begin{array}{ccccc}
 0 & 1/4 & \cdots & \cdots & 1/4\\[-1.5ex]
-1/4 & 0 & 1/4 & \cdots & 1/4\\[-1.5ex]
\vdots & -1/4 &\ddots & \ddots & \vdots \\[-1.5ex]
\vdots&  & \ddots & 0 & 1/4\\[-1.5ex]
-1/4 & -1/4 & \cdots & -1/4 & 0
\end{array}}\right]\right)~.
%"out_bool_ten_cg_1_8" 321L, 16268C written
%-bash-4.1$ pwd
%/home/faculty/margulies/maple_code/Pfaffian/sig_files1/gi_code/bool_ten
\end{align*}
\end{proof}

\vspace{-8pt}
We now introduce a tensor contraction network fragment that is equivalent to the $n$-arity \textsc{equal} gate/cogate, in that if any edge in the fragment is fixed to $|0\rangle$ or $|1\rangle$, then every edge in the fragment is automatically fixed to the same state. In the following theorem, every vertex is associated with a \textsc{equal} tensor (i.e., the 2-arity cogate is $\langle 00| + \langle 11|$, and the 3-arity gate is $|000\rangle + |111\rangle$, etc.).
\begin{theorem} \label{thm_bool_tree}
$\phantom{xxx}$
\begin{enumerate}
	\item There does \textbf{not} exist a matrix $A \in \mathbb{C}^{2 \times 2}$ such that the  collection of \textsc{equal} gates/cogates in Fig. \ref{fig_thm_bt}.a are Pfaffian under the change of basis $A$.
%"bi_tree_cg.sig"
%"out_bi_tree_cg.sig"
% runs in under a second
%[sum32@cyberstar tree_bi]$ pwd
%/gpfs/home/sum32/work/maple_code/Pfaffian/sig_files1/bool_ten/tree_bi
 	\item There \textbf{do} exist matrices $A, B \in \mathbb{C}^{2 \times 2}$ such that the collection of \textsc{equal} gates/cogates in Fig. \ref{fig_thm_bt}.b are Pfaffian under the changes of bases $A$ and $B$.
%[sum32@cyberstar tree_bi]$ pwd
%/gpfs/home/sum32/work/maple_code/Pfaffian/sig_files1/bool_ten/tree_bi
%"out_bi_tree_cg_2a_elim.sig" 17L, 737C 
\end{enumerate}
%-bash-4.1$ pwd
%/home/faculty/margulies/maple_code/Pfaffian/sig_files1/gi_code/bool_ten
%-bash-4.1$ ls
%bool_cg_ten_1_6.sig  out_bool_cg_ten_1_6.sig
%gi_var_het.sig       out_gi_var_het.sig
\begin{figure}[!h]
\hspace{50pt}\begin{minipage}{.4\linewidth}
\begin{center}
% trim=l b r t
\includegraphics[scale=.21,clip=true,trim=0 425 0 81]{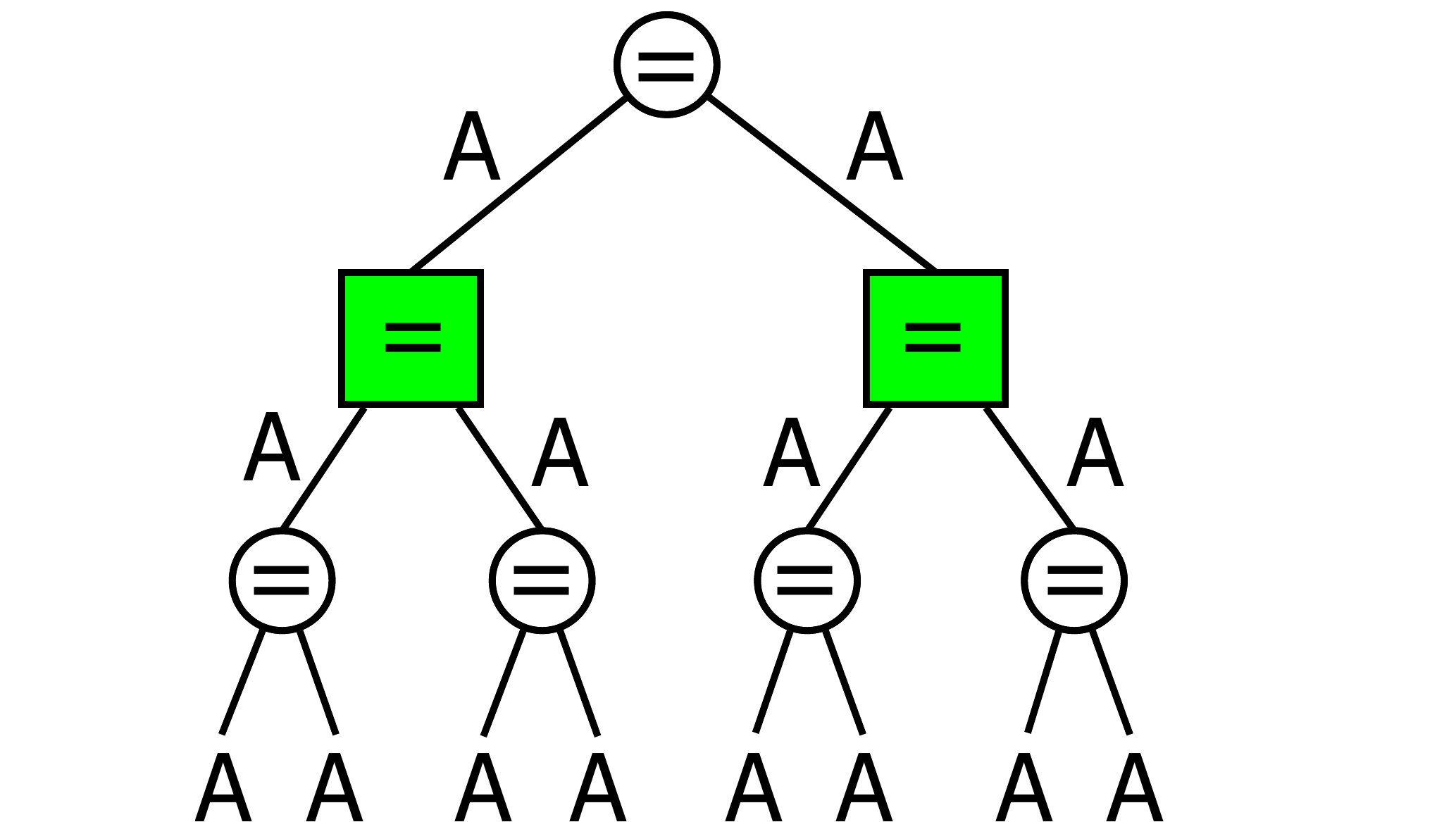}
\end{center}
%\vspace{-10pt}
%\caption*{(a)}
\hspace{65pt}(a)
\end{minipage}
\vspace{-12pt} 
\hspace{30pt}\begin{minipage}{.4\linewidth}
\begin{center}
% trim=l b r t
\includegraphics[scale=.21,clip=true,trim=0 375 0 80]{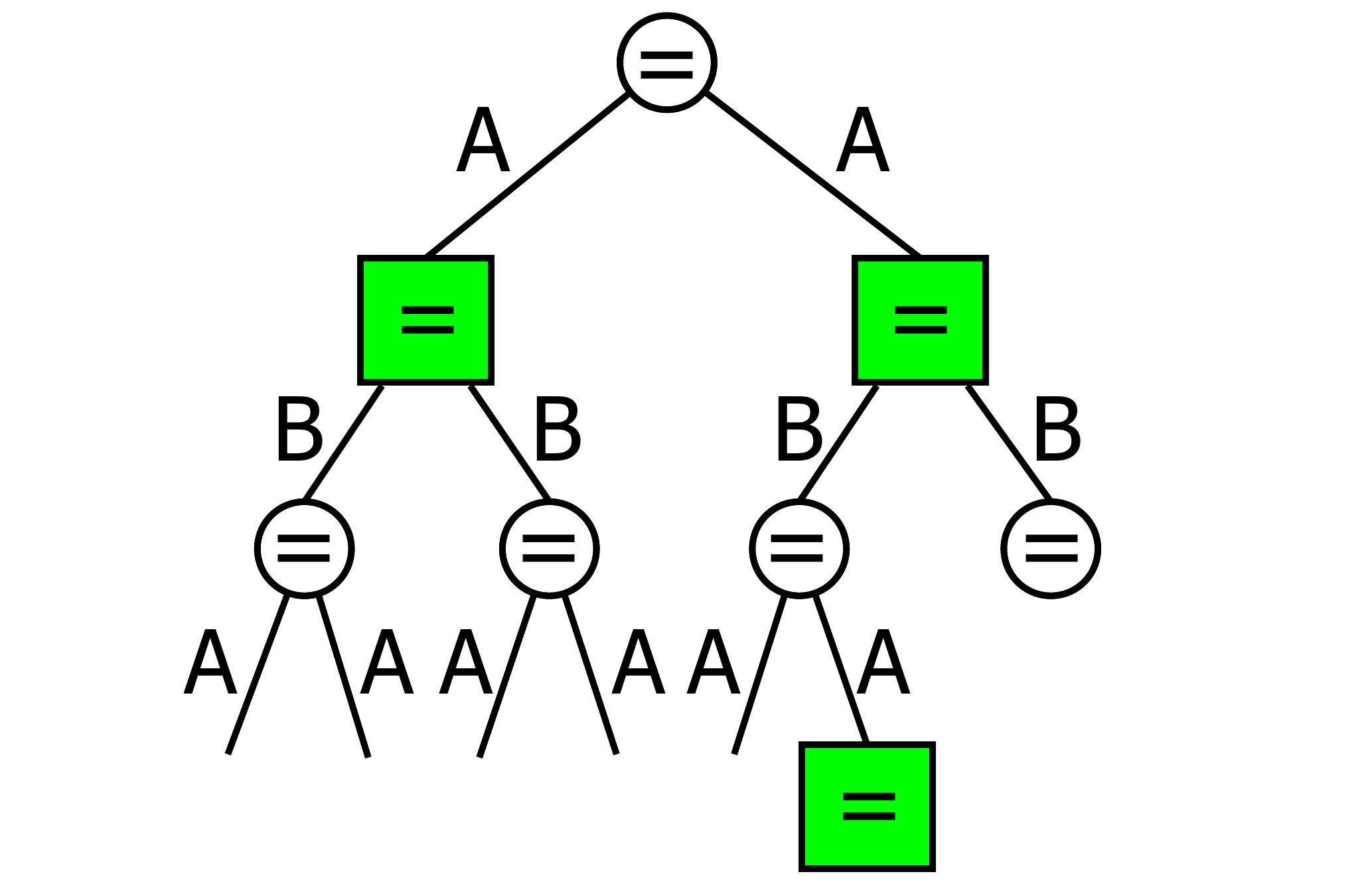}
\end{center}
\vspace{-5pt}
%\caption*{(b)}
\hspace{70pt}(b)
\end{minipage}
\caption{Boolean trees (homogeneous and heterogeneous)} \label{fig_thm_bt}
\end{figure}
\end{theorem}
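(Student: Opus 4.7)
The plan is to handle the two parts using Theorem~\ref{thm_pfaffian_gates_cob} (and its cogate analogue) in complementary ways: for Part~(1), I will derive a contradiction from the parity equations; for Part~(2), I will exhibit an explicit heterogeneous basis guided by Proposition~\ref{prop_bool_gcg} and the computational approach of Section~\ref{sec_alg_meth}.

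For Part~(1), I argue by contradiction. Suppose some invertible $A = \begin{pmatrix} a & b \\ c & d \end{pmatrix}$ makes every \textsc{equal} gate and cogate in Fig.~\ref{fig_thm_bt}.a Pfaffian. Since the tree is bipartite and branches, it contains at least one arity-$3$ \textsc{equal} gate $|000\rangle + |111\rangle$ and at least one arity-$3$ \textsc{equal} cogate $\langle 000| + \langle 111|$. A direct expansion shows that the coefficient of $|111\rangle$ in $A^{\otimes 3}(|000\rangle + |111\rangle)$ equals $c^3 + d^3$, and the coefficient of $\langle 000|$ in $(A^{-1})^{\otimes 3}(\langle 000| + \langle 111|)$ equals $(\det A)^{-3}(d^3 - c^3)$. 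Since $|I'| = |J^C| = 3$ is odd in both cases, Eq.~\ref{eq_parity} (and its cogate analogue) forces $c^3 + d^3 = 0$ and $d^3 - c^3 = 0$; adding and subtracting yields $c = d = 0$, contradicting $\det A \neq 0$.

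For Part~(2), I will exhibit explicit matrices $A, B \in \mathbb{C}^{2 \times 2}$ together with an edge coloring of Fig.~\ref{fig_thm_bt}.b under which every \textsc{equal} vertex is Pfaffian. The natural starting point is the pair from Proposition~\ref{prop_bool_gcg}: the Hadamard-like $A$ handles pure-gate \textsc{equal}s, while $B$ handles pure-cogate \textsc{equal}s. The strategy is to color each edge of Fig.~\ref{fig_thm_bt}.b with either $A$ or $B$ according to which side of the tree's bipartition it lies on; this induces, at each interior \textsc{equal} vertex, a specific unordered multiset of $A$s and $B$s on its incident edges. By total symmetry of the \textsc{equal} tensor, the Pfaffian condition at each vertex reduces, via Theorem~\ref{thm_pfaffian_gates_cob}, to a small polynomial system in the joint entries of $A$ and $B$ that depends only on the arity and the color multiplicities. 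Solving this coupled system in \textsc{Singular}, as in Section~\ref{sec_alg_meth}, will yield concrete $A, B$ whenever a solution exists.

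The main obstacle will be coupling: the same matrices $A$ and $B$ must simultaneously satisfy the Pfaffian conditions at every interior vertex of the tree. I plan to handle this by first solving the minimal sub-tree consisting of one \textsc{equal} gate and one \textsc{equal} cogate sharing a single edge, which already ties $A$ and $B$ together, and then verifying inductively that the same $(A, B)$ continues to satisfy the parity and consistency equations (Eqs.~\ref{eq_parity} and~\ref{eq_consist}) at deeper levels. Because all interior vertices are \textsc{equal} tensors with a uniform recursive structure, the inductive step reduces to a bounded list of polynomial identities, each amenable to Gr\"obner basis verification.
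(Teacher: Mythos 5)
Your Part~(1) argument is a genuine improvement over the paper, which disposes of this half of the theorem with a one-line citation to a Gr\"obner basis run.  Your hand derivation is correct: with $A = \begin{pmatrix} a & b \\ c & d\end{pmatrix}$, the coefficient of $|111\rangle$ in $A^{\otimes 3}(|000\rangle + |111\rangle)$ is indeed $c^3 + d^3$ (via Eq.~\ref{eq_g_cob}), and the coefficient of $\langle 000|$ in $(A^{-1})^{\otimes 3}(\langle 000| + \langle 111|)$ is $(\det A)^{-3}(d^3 - c^3)$ since $A^{-1} = (\det A)^{-1}\begin{pmatrix} d & -b \\ -c & a\end{pmatrix}$.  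Both coefficients sit on odd-size index sets (arity-$3$ Pfaffians vanish for kets/bras at odd Hamming weight for gates and odd co-weight for cogates), so the parity conditions force $c^3 + d^3 = 0 = d^3 - c^3$, giving $c = d = 0$ and thus $\det A = 0$.  One caveat to flag explicitly: the argument only applies if Fig.~\ref{fig_thm_bt}.a contains \emph{both} an arity-$3$ \textsc{equal} gate \emph{and} an arity-$3$ \textsc{equal} cogate; ``the tree branches'' is not by itself enough to force the existence of both, since a tree could branch at a single arity-$3$ gate and have only arity-$1$ or arity-$2$ cogates.  You should cite the actual vertex list of the figure.  (If the arity-$3$ cogate were absent, a similar contradiction can still be extracted from the arity-$1$ gate $|0\rangle + |1\rangle$ and cogate $\langle 0| + \langle 1|$, which under a homogeneous $A$ force $c + d = 0$ and $c - d = 0$ respectively; so the result is robust, but that is a different pair of constraints than the ones you wrote.)

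Part~(2), however, is not a proof; it is an outline of how one might produce one.  The paper's proof actually exhibits the matrices $A = \begin{pmatrix} 1 & 1 \\ 1 & -1\end{pmatrix}$, $B = \begin{pmatrix} 1 & -1 \\ 1/2 & 1/2\end{pmatrix}$ together with five explicit Pfaffian certificates, one per vertex type.  You should do the same: write down the $\Xi$ or $\Theta$ matrix and scalar for the $2$-arity cogate, the $3$-arity gate, the $3$-arity cogate, and the two $1$-arity sealers, and then verify (not merely assert) that these are consistent.  Saying ``solving this coupled system in \textsc{Singular} will yield concrete $A, B$ whenever a solution exists'' simply restates the problem.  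Also, two descriptive claims in your sketch are imprecise: the edge coloring is not ``by which side of the bipartition the edge lies on'' (edges do not lie on a side; they connect the two sides), and it is not the case that ``$A$ handles pure-gate \textsc{equal}s, $B$ handles pure-cogate \textsc{equal}s.''  In the actual construction each interior \textsc{equal} sees a \emph{mixed} basis determined per edge --- e.g.\ the arity-$3$ gate is Pfaffian under $A \otimes B \otimes B$ and the arity-$3$ cogate under $B^{-1} \otimes A^{-1} \otimes A^{-1}$.  The coloring alternates by depth (the parent-facing edge and the child-facing edges of a vertex get different colors), which is what makes the two adjacent tensors compatible on each shared edge.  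Your inductive framing is reasonable (the vertex types repeat, so verifying one gate--cogate pair plus the two sealers suffices), but it must be backed by the concrete certificates to constitute a proof.
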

\vspace{-10pt}
\begin{proof} The proof of Theorem \ref{thm_bool_tree}.1 is by a Gr\"obner basis computation that runs in under a second. For Theorem \ref{thm_bool_tree}.2, we present the ``Pfaffian certificates" for each of the \textsc{equal} gates/cogates present in the Boolean tree. Let $A, B \in \mathbb{C}^{2 \times 2}$ be as follows:
\[
A = \left[\begin{array}{cc}
1 & 1\\[-1.5ex]
1 & -1
\end{array} \right]~, \quad
A^{-1} = -\frac{1}{2}\left[\begin{array}{cc}
-1 & -1\\[-1.5ex]
-1 & 1
\end{array} \right]~, \quad
B = \left[\begin{array}{cc}
1 & -1\\[-1.5ex]
1/2 & 1/2
\end{array} \right]~, \quad B^{-1} = \left[\begin{array}{cc}
1/2 & 1\\[-1.5ex]
-1/2 & 1
\end{array} \right]~,
\]

\begin{minipage}{.85\linewidth}
\vspace{-12pt}
\begin{align*}
\big(A^{-1} \otimes A^{-1}\big)\big(\langle 00| + \langle 11|\big) &= \frac{1}{2}\hspace{1pt}\text{sPf}^*\left(\left[ \begin{array}{cc}
0 & 1\\[-1.5ex]
-1 & 0
\end{array}\right]\right)~,\\
(A \otimes B \otimes B)\big(| 000\rangle + |111\rangle\big) &= 2\hspace{1pt}\text{sPf}\left(\left[ \begin{array}{ccc}
0 & 1/2 & 1/2\\[-1.5ex]
-1/2 & 0 & 1/4\\[-1.5ex]
-1/2 & -1/4 & 0
\end{array}\right]\right)~,\\
\big(B^{-1} \otimes A^{-1} \otimes A^{-1}\big)\big(\langle 000| + \langle 111|\big) &= \frac{1}{2}\hspace{1pt}\text{sPf}^{*}\left(\left[ \begin{array}{ccc}
0 & 1/2 & 1/2\\[-1.5ex]
-1/2 & 0 & 1\\[-1.5ex]
-1/2 & -1 & 0
\end{array}\right]\right)~,\\
B^{-1}\big(\langle 0| + \langle 1|\big) = 2\hspace{1pt}\text{sPf}^{*}\left(\left[ \begin{array}{c}
0
\end{array}\right]\right)~, &\quad
A\big(|0\rangle + |1\rangle\big) = 2\hspace{1pt}\text{sPf}\left(\left[ \begin{array}{c}
0
\end{array}\right]\right)~.
%"out_bool_tree_AB" 113L, 3390C written
%-bash-4.1$ pwd
%/home/faculty/margulies/maple_code/Pfaffian/sig_files1/gi_code/bool_ten
\end{align*}
\end{minipage}
\begin{minipage}{.15\linewidth}
\begin{center}
% trim=l b r t 
\vspace{-5pt}
\hspace{-3pt}\includegraphics[scale=.22,clip=true,trim=90 659 0 85]{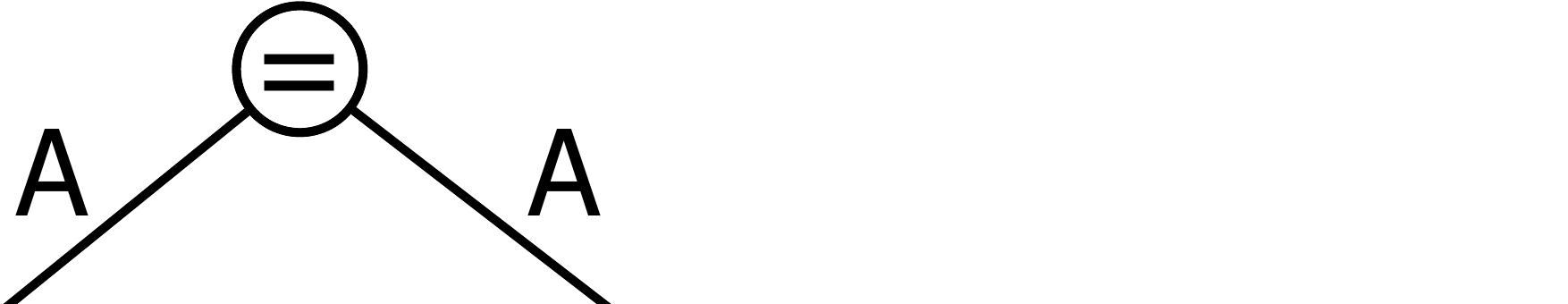}\\
\vspace{-10pt}
\hspace{-9pt}\includegraphics[scale=.25,clip=true,trim=80 580 320 30]{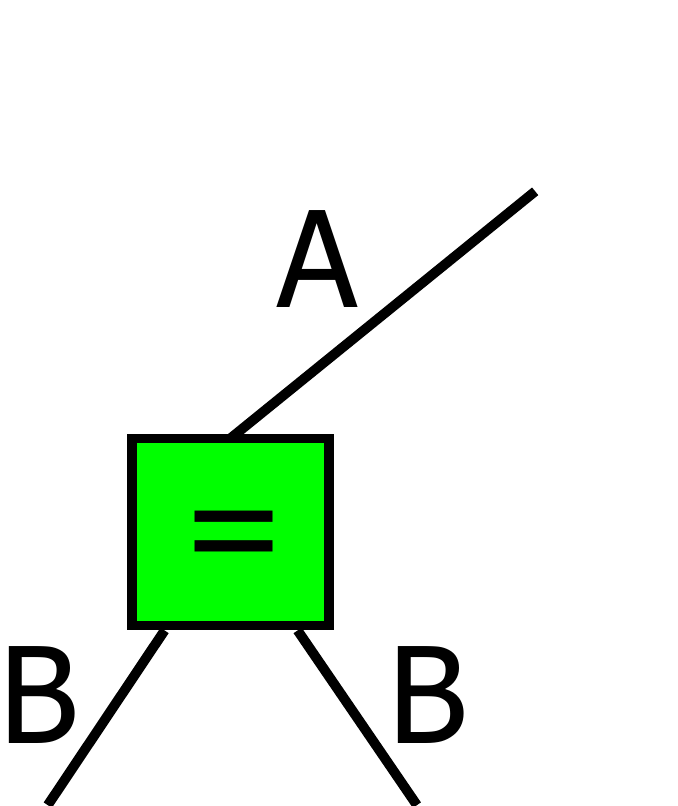}\\
\vspace{-9pt}
\hspace{-9pt}\includegraphics[scale=.25,clip=true,trim=80 590 350 40]{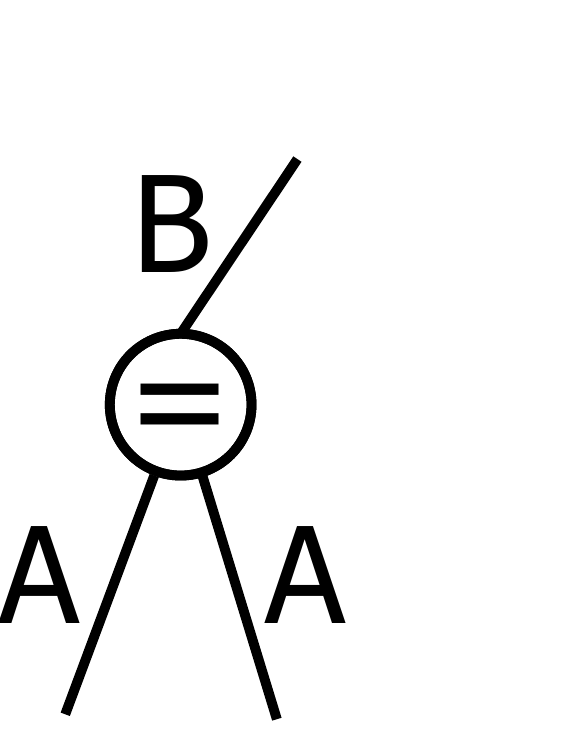}\\
\hspace{-25pt}\includegraphics[scale=.25,clip=true,trim=80 640 350 50]{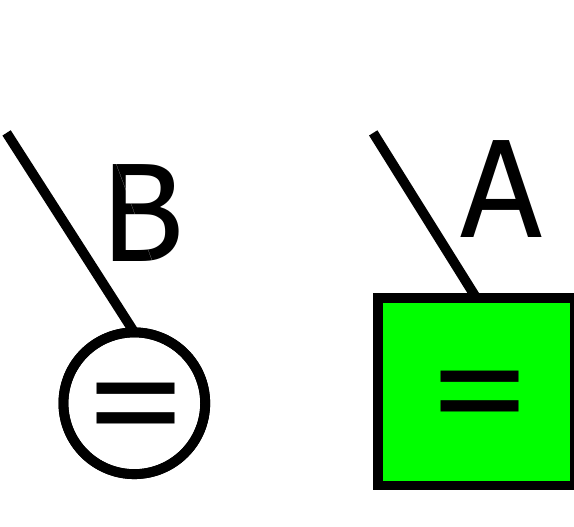}
\end{center}
\end{minipage}

\hfill 
\end{proof}

We will refer to the trees defined by Theorem \ref{thm_bool_tree}.2 as \emph{Boolean trees}, since they are meant to represent 0/1 variables or $n$-arity \textsc{equal} gates/cogates. We first observe that these trees can be extended to an arbitrary height. Since the gates ($\Box$) are Pfaffian under the change of basis $A,BB$, and the cogates ($\circ$) are Pfaffian under the change of basis $B,AA$, the gates can always be connected to the cogates, and vice versa, allowing the tree to be indefinitely ``grown". We next observe that any gate can be ``sealed off" with the 1-arity Boolean cogate $\langle 0| + \langle 1|$ (which is Pfaffian under change of basis $B$), and that any cogate can likewise be ``sealed off" with the 1-arity Boolean gate $|0\rangle + |1\rangle$ (which is Pfaffian under the change of basis $A$). Therefore, for any integer $n$, there exists a Boolean tree that is equivalent to an $n$-arity Boolean gate/cogate. For example, Fig. \ref{fig_thm_bt}.b is equivalent to 5-arity Boolean cogate. 

One metric of comparison for Boolean trees and $n$-arity Boolean gates/cogates is the complexity of their respective Gr\"obner bases. For example, the following 18 equations in degrees one, two and three is the Gr\"obner basis for a Boolean tree of arbitrary size:
\begin{align*}
a_{10}+a_{11} = 
a_{00}-a_{01} = 
2a_{11}^2-1 = 
2a_{01}a_{11}-\det(A) =
2a_{01}^2-\det(A)^2 = 0~ ,\\
a_{01}\det(A)^{-1}-a_{11} =
a_{11}\det(A)-a_{01} =
\det(A)\det(A)^{-1}-1 = 0~,\\
b_{10}-b_{11} = 
b_{00}+b_{01} = 
2b_{11}-1 =
b_{01}+\det(B) = 
\det(B)\det(B)^{-1}-1=0~,\\
\big(\det(B)^{-1}\big)^2-2a_{01} = 
2a_{11}\det(B)-\det(A)^{-1}\det(B)^{-1} = 
2a_{01}\det(B)-\det(B)^{-1} = 0~,\\
a_{11}\det(A)^{-1}-\det(B)^2 = 
2\det(B)^3-\big(\det(A)^{-1}\big)^2\det(B)^{-1} = 0~.
%out_bi_tree_cg_2a_elim.sig
%[sum32@cyberstar tree_bi]$ pwd
%/gpfs/home/sum32/work/maple_code/Pfaffian/sig_files1/bool_ten/tree_bi
\end{align*}
%out_g0_cg9_hom_eval_sf1_9.sig
%/home/faculty/margulies/maple_code/Pfaffian/sig_files1/g0_cg9_hom
%j1[82]=2*x(546)^8*x(548)^2-x(545)^7*x(547)*x(565)
By comparison, the Gr\"obner basis of the 9-arity Boolean cogate contains 82 polynomials, ranging from degrees two to ten. Furthermore, the complexity of the Gr\"obner basis of the $n$-arity Boolean gate/cogate obviously grows with respect to $n$, while the complexity of the ``$n$-arity" Boolean tree remains constant. Since the algebraic approach to exploring Pfaffian circuits relies on the complexity of computing the Gr\"obner basis, the question of categorizing classes of compatible gates/cogates (representing constraints) is obviously more efficiently pursued with Boolean trees.

Despite the algebraic advantages of using a Boolean tree (as opposed to an $n$-arity \textsc{equal} gate/cogate), at first glance it may seem that introducing two distinct bases $A$ and $B$ may complicate the question of identifying compatible constraint gates/cogates. The two obstacles to overcome are 1) that the overall tensor contraction network must remain bipartite, and 2) that each of the gates/cogates must be simultaneously Pfaffian. However, the following proposition demonstrates that it is possible to treat the bases $A$ and $B$ as virtually interchangeable, since a ``bridge" can always be built from gate to cogate, and from basis $A$ to basis $B$ (and vice versa).

\begin{proposition} \label{prop_join} Given matrices $A, B$ as in Prop. \ref{prop_bool_gcg},
%\[
%A = \left[\begin{array}{cc}
%1 & 1\\
%1 & -1
%\end{array} \right]~, \quad \text{and} \quad
%B = \left[\begin{array}{cc}
%1 & -1\\
%1/2 & 1/2
%\end{array} \right]~,
%%A := [1,-1,1/2,1/2];
%%"check_pfaffian" 1021L, 32561C written
%%/home/faculty/margulies/maple_code/Pfaffian/sig_files1/gi_code/bool_ten
%\]
the following 2-arity gates/cogates are Pfaffian under both $(A \otimes A)$ and $(B \otimes B)$: $\langle 00| + \langle 11|, |00\rangle +  |11\rangle$ (\textsc{Equal}), and $\langle 10| + \langle 01|, |10\rangle +  |01\rangle$ (\textsc{not})~.
% (and $A^{-1} \otimes A^{-1}$ and $B^{-1} \otimes B^{-1}$, respectively)
%\begin{figure}[!h]
%\hspace{40pt}
%\begin{minipage}[b]{0.40\linewidth}
%\vspace{-20pt}
%\begin{center}
%% trim=l b r t
%\hspace{5pt}\includegraphics[scale=.25,clip=true,trim=91 466 285 90]{join.pdf}
%\end{center}
%\vspace{-10pt}
%\caption*{(a) $\langle 00| + \langle 11|~,$\\
%\hspace{12pt}$|00\rangle +  |11\rangle$\\
%\hspace{15pt}\textsc{join}}
%\end{minipage}
%%\hspace{-20pt}
%\begin{minipage}[b]{0.40\linewidth}
%\begin{center}
%% trim=l b r t
%\includegraphics[scale=.25,clip=true,trim=91 466 285 90]{join.pdf}
%\end{center}
%\vspace{-10pt}
%\caption*{(b) $\langle 10| + \langle 01|~,$\\
%\hspace{12pt}$|10\rangle +  |01\rangle$\\
%\hspace{17pt}\textsc{not}}
%\end{minipage}
%\end{figure}
\end{proposition}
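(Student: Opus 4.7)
The plan is to prove this by direct verification of the eight cases (four tensors times two bases). First, observe that for any $2 \times 2$ skew-symmetric matrix $\Xi = \left(\begin{smallmatrix}0 & \xi \\ -\xi & 0\end{smallmatrix}\right)$, Definition~\ref{def_sPf} yields $\text{sPf}(\Xi) = |00\rangle + \xi|11\rangle$ and $\text{sPf}^{*}(\Xi) = \xi\langle 00| + \langle 11|$. Consequently, any $2$-arity gate (resp.\ cogate) whose expansion after the change of basis is supported only on $\{|00\rangle, |11\rangle\}$ (resp.\ $\{\langle 00|, \langle 11|\}$) is automatically Pfaffian, and both the overall scalar and the single entry $\xi_{12}$ (or $\theta_{12}$) may be read off from the surviving coefficients.

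To organize the computation, view the \textsc{equal} tensor as the $2 \times 2$ identity matrix $I$ and the \textsc{not} tensor as the swap matrix $X = \left(\begin{smallmatrix}0 & 1 \\ 1 & 0\end{smallmatrix}\right)$, so that $|00\rangle + |11\rangle \leftrightarrow I$ and $|10\rangle + |01\rangle \leftrightarrow X$ (and likewise for the corresponding cogates). A direct unpacking of Eq.~\ref{eq_g_cob} shows that applying $(M \otimes M)$ to the gate associated with a $2 \times 2$ matrix $T$ produces the gate associated with $M T M^{T}$, and applying $(M^{-1} \otimes M^{-1})$ to the associated cogate produces the cogate associated with $(M^{-1})^{T} T M^{-1}$. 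Thus it suffices to verify that each of the eight congruence products $A A^{T},\ A X A^{T},\ B B^{T},\ B X B^{T}$ (on the gate side) and $(A^{-1})^{T} A^{-1},\ (A^{-1})^{T} X A^{-1},\ (B^{-1})^{T} B^{-1},\ (B^{-1})^{T} X B^{-1}$ (on the cogate side) is diagonal, which is exactly the condition that kills the $|01\rangle, |10\rangle$ and $\langle 01|, \langle 10|$ cross terms.

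Each of these products is a short $2 \times 2$ calculation, and all eight turn out to be diagonal with nonzero entries. For example, $A A^{T}$ has diagonal $(2,2)$, giving the certificate $(A \otimes A)(|00\rangle + |11\rangle) = 2\,\text{sPf}\!\left(\left(\begin{smallmatrix}0 & 1 \\ -1 & 0\end{smallmatrix}\right)\right)$, while $B X B^{T}$ has diagonal $(-2,\ 1/2)$, giving $(B \otimes B)(|10\rangle + |01\rangle) = -2\,\text{sPf}\!\left(\left(\begin{smallmatrix}0 & -1/4 \\ 1/4 & 0\end{smallmatrix}\right)\right)$; the other six certificates are assembled analogously. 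In each case the normalizing scalar $\alpha$ (or $\beta$) is chosen so that the $|00\rangle$ coefficient of $\alpha\,\text{sPf}(\Xi)$ (respectively the $\langle 11|$ coefficient of $\beta\,\text{sPf}^{*}(\Theta)$) matches the transformed tensor.

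There is no serious obstacle to this argument; it is essentially a bookkeeping exercise. The only subtle point is keeping the two conventions straight (transpose versus inverse-transpose) for how a change of basis acts on kets versus bras, and remembering to factor out the leading scalar so that the constant term of the sPf (or sPf$^{*}$) certificate equals $1$. The structural reason the proposition holds at all is that the matrices $A$ and $B$ from Prop.~\ref{prop_bool_gcg} were chosen precisely so that they simultaneously diagonalize both $I$ and $X$ under the congruence $T \mapsto M T M^{T}$ (and its inverse-transpose analogue); this is a restrictive condition and explains why these particular bases, rather than an arbitrary pair, support the ``bridge'' that links gates and cogates with different basis roles in a bipartite Pfaffian circuit.
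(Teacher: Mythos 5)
Your proposal is correct: I verified all eight congruence products and they are indeed diagonal with nonzero entries, and the two sample certificates you give are accurate. The paper states Prop.~\ref{prop_join} without proof (its certificates, like the others, were found by Gr\"obner basis computation as described in Sec.~\ref{sec_alg_meth}), so your argument is essentially the same verification, but your reframing via the congruence action $T \mapsto MTM^{T}$ on the $2\times 2$ coefficient matrix (resp.\ $(M^{-1})^{T}TM^{-1}$ for cogates) is a genuinely cleaner organization: it isolates the single structural fact that $A$ and $B$ (and their inverses) diagonalize both $I$ and $X$ under congruence, turning eight seemingly independent certificate checks into one unified criterion and making visible \emph{why} these particular bases admit the gate/cogate bridge.
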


Furthermore, the following is a direct corollary of Theorem \ref{thm_bool_tree}.

\begin{corollary} \label{cor_br} Given matrices $A, B$ as in Prop. \ref{prop_bool_gcg},
%\[
%A = \left[\begin{array}{cc}
%1 & 1\\
%1 & -1
%\end{array} \right]~, \quad \text{and} \quad
%B = \left[\begin{array}{cc}
%1 & -1\\
%1/2 & 1/2
%\end{array} \right]~,
%%A := [1,-1,1/2,1/2];
%%"check_pfaffian" 1021L, 32561C written
%%/home/faculty/margulies/maple_code/Pfaffian/sig_files1/gi_code/bool_ten
%\]
the following two gate/cogate combinations are Pfaffian under the indicated changes of bases.
\begin{center}
% trim=l b r t
\hspace{100pt}\includegraphics[scale=.21,clip=true,page=1,trim=80 466 70 90]{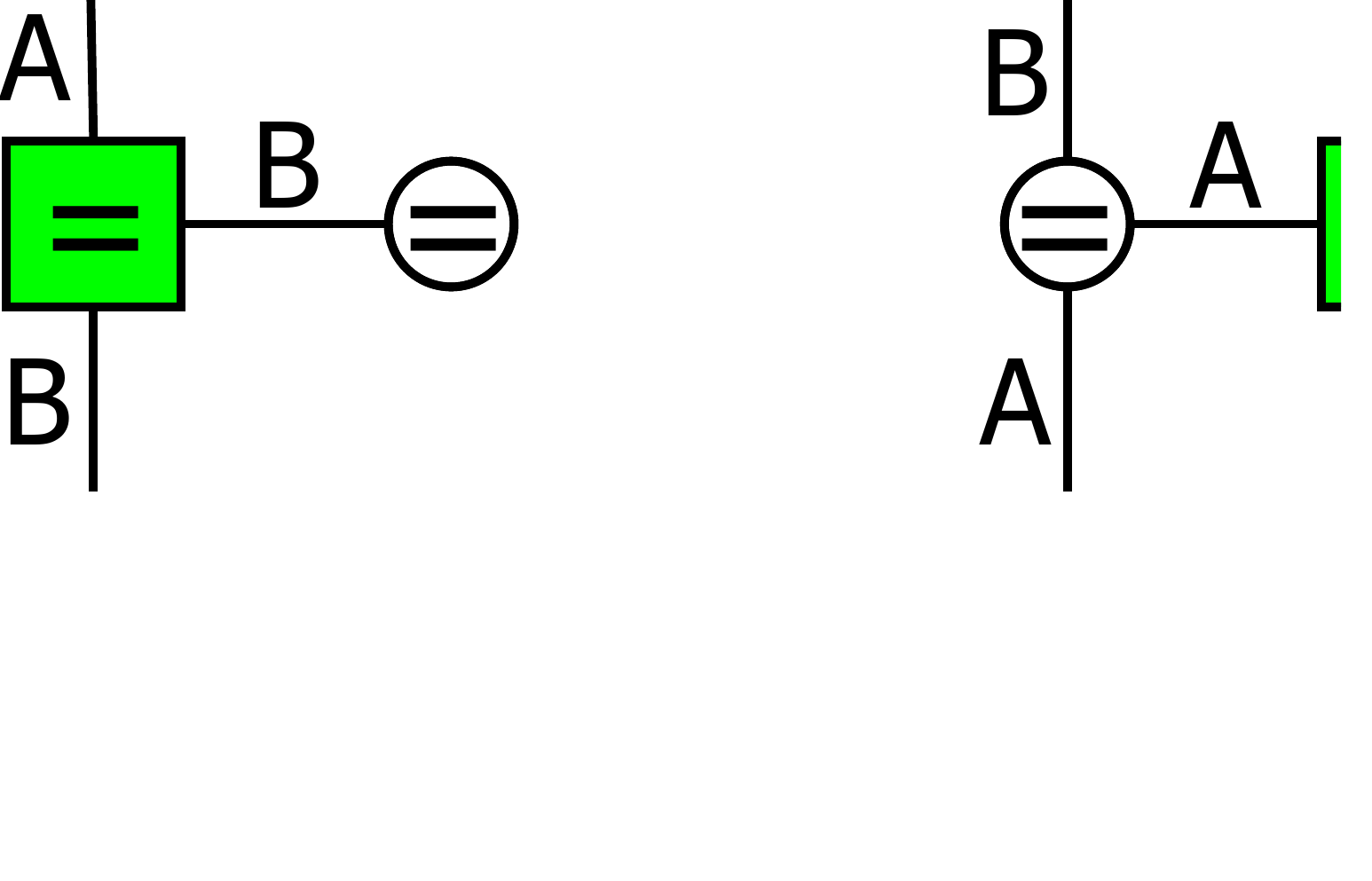}
\hspace{-7pt}\includegraphics[scale=.21,clip=true,page=2,trim=80 466 0 90]{bridge_AB.pdf}
\end{center}
\vspace{-40pt}
\hfill $\Box$
\end{corollary}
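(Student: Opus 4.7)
The plan is to observe that this corollary is a direct consequence of the explicit Pfaffian certificates already assembled in the proof of Theorem \ref{thm_bool_tree}.2, together with the basis-interchangeability property established in Prop. \ref{prop_join}. Each of the two pictured bridge fragments is built entirely from \textsc{equal} gates/cogates (plus 1-arity caps and possibly a 2-arity \textsc{equal}/\textsc{not} connector), and the proof of Theorem \ref{thm_bool_tree}.2 has already exhibited, for each such building block, a skew-symmetric matrix $\Xi$ (or $\Theta$) and a scalar witnessing that the block is Pfaffian under the specific mixed assignment of $A$- and $B$-bases indicated in the figure.

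First I would unpack each of the two bridge fragments into its constituent vertices, labeling every edge with the basis ($A$ or $B$) that will be assigned to it. For each internal vertex, I would then point to the matching line in the proof of Theorem \ref{thm_bool_tree}.2: the 2-arity \textsc{equal} cogate carries the certificate $(A^{-1}\otimes A^{-1})(\langle 00|+\langle 11|) = \tfrac{1}{2}\,\text{sPf}^{*}(\Xi)$, the 3-arity \textsc{equal} gate has the mixed certificate $(A\otimes B\otimes B)(|000\rangle+|111\rangle) = 2\,\text{sPf}(\Xi)$, its cogate counterpart uses $(B^{-1}\otimes A^{-1}\otimes A^{-1})$, and the 1-arity caps $|0\rangle+|1\rangle$ and $\langle 0|+\langle 1|$ have their own trivial certificates under $A$ and $B^{-1}$, respectively. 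Wherever two adjacent vertices would otherwise disagree on the basis of their shared edge, I would invoke Prop. \ref{prop_join}, which guarantees that the 2-arity \textsc{equal} and \textsc{not} tensors are Pfaffian under \emph{both} $(A\otimes A)$ and $(B\otimes B)$ and thus can serve as a connector absorbing any local basis mismatch.

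Once every vertex in each bridge fragment has been equipped with a certificate whose per-edge bases match the global basis assignment along that edge, the fragment is Pfaffian by Def.\ \ref{def_pfaff_gcg}, and the corollary follows with no further computation. The only real obstacle is bookkeeping: correctly reading off from the two panels of the figure which edges are $A$-labeled and which are $B$-labeled, and keeping track of when the inverse $A^{-1}$ or $B^{-1}$ must be applied on the cogate side of an edge (so that the contraction identity $\langle i|j\rangle = \delta_{ij}$ is preserved, as argued in Sec.\ \ref{sec_back_pfaff}). With the certificates from Theorem \ref{thm_bool_tree}.2 and Prop.\ \ref{prop_join} already in hand, no new Gr\"obner basis computation is needed, which is exactly why this statement is a corollary rather than a theorem.
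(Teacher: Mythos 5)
Your approach is essentially the same as the paper's: the paper simply declares this statement a direct corollary of Theorem \ref{thm_bool_tree} and gives no further argument, and your plan is indeed to read off the Pfaffian certificates exhibited in the proof of Theorem \ref{thm_bool_tree}.2. The only divergence is that you also invoke Prop.\ \ref{prop_join} as a possible ``connector'' for absorbing basis mismatches; the paper does not appeal to Prop.\ \ref{prop_join} here, because the two bridge fragments are just sub-fragments of the Boolean tree (a 3-arity \textsc{equal} gate or cogate with one 1-arity cap), so the certificates $(A\otimes B\otimes B)(|000\rangle+|111\rangle)$, $(B^{-1}\otimes A^{-1}\otimes A^{-1})(\langle 000|+\langle 111|)$, $A(|0\rangle+|1\rangle)$, and $B^{-1}(\langle 0|+\langle 1|)$ from Theorem \ref{thm_bool_tree}.2 already cover every vertex with matching per-edge bases. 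That extra invocation is harmless but unnecessary; otherwise the argument is correct and aligns with what the paper intends.
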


The significance of Prop. \ref{prop_join} is that a gate can be arbitrarily changed into a cogate, without changing the state on the outgoing edge (\textsc{equal}), or by flipping the state  (\textsc{not}) from $|0\rangle$ to $|1\rangle$ (or $\langle 0|$ and $\langle 1|$, respectively). The significance of Cor. \ref{cor_br} is that not only can a gate be changed to a cogate, but the \emph{basis on the outgoing edge itself can be changed} (from $A$ to $B$, or vice versa) without altering the state on the outgoing edge. We will now investigate how to categorize the gates/cogates that successfully pair with a Boolean tree. Towards that end, we introduce the following definition.

\begin{defi}
A gate $G$ is \emph{invariant under complement}, if for every ket $|I\rangle$ in the tensor $G$, the complement ket $\overline{|I\rangle}$ also appears in the tensor.
\end{defi}

\begin{example} \label{ex_g3_com_prop} Here are two 3-arity gates that are invariant under complement:
\vspace{-5pt}
\begin{align*}
% maple order~,\\
% {{}, {1}, {2}, {3}, {1, 2}, {1, 3}, {2, 3}, {1, 2, 3}}~,\\
%~,\\
 %10000001~, %01000010~, % 00011000~, %00100100~,
%|000\rangle  + |111\rangle ~,& \quad
%|100\rangle  + |011\rangle ~,\\
%|010\rangle  + |101\rangle ~,& \quad
%|001\rangle  + |110\rangle ~,\\
%00111100~, %01011010~,
|010\rangle  + |001\rangle  + |110\rangle  + |101\rangle ~,& \quad 
|100\rangle  + |001\rangle  + |110\rangle  + |011\rangle ~.
%01100110~, %10011001~,\\
%|100\rangle  + |010\rangle  + |101\rangle  + |011\rangle ~,& \quad
%|000\rangle  + |001\rangle  + |110\rangle  + |111\rangle ~,\\
%10100101~, %11000011~,
%|000\rangle  + |010\rangle  + |101\rangle  + |111\rangle ~,& \quad 
%|000\rangle  + |100\rangle  + |011\rangle  + |111\rangle ~,\\
%01111110~, 
%|100\rangle  + |010\rangle  + |001\rangle  + |110\rangle   + |101\rangle  + |011\rangle ~,& \quad 
%10111101~,
%|000\rangle  + |010\rangle  + |001\rangle  + |110\rangle  + |101\rangle + |111\rangle ~,\\
%11011011~,
%|000\rangle   + |100\rangle  + |001\rangle  + |110\rangle  + |011\rangle  + |111\rangle ~,& \quad
%11100111~,
%|000\rangle  + |100\rangle  + |010\rangle  + |101\rangle  + |011\rangle  + |111\rangle ~,\\
%11111111~,
\end{align*}
%\vspace{-53pt}
%\begin{align*}
%|000\rangle   + |100\rangle  + |010\rangle  + |001\rangle  + |110\rangle   + |101\rangle  + |011\rangle  + |111\rangle ~.
%\end{align*}
%/home/faculty/margulies/maple_code/Pfaffian/sig_files1/gi_code/bool_ten/tree_tri/g0_cg3_het_files
% sol_btg %% with NOT!
%/home/faculty/margulies/maple_code/Pfaffian/sig_files_all/g0ma_cg3_het_2/separate
%sep_file.pl
\vspace{-25pt}
\end{example}

\begin{theorem} \label{thm_bt_pair_hom} Given matrices $A, B$ as in Prop. \ref{prop_bool_gcg},
%\[
%A = \left[\begin{array}{cc}
%1 & 1\\
%1 & -1
%\end{array} \right]~, \quad \text{and} \quad
%B = \left[\begin{array}{cc}
%1 & -1\\
%1/2 & 1/2
%\end{array} \right]~,
%%A := [1,-1,1/2,1/2];
%%"check_pfaffian" 1021L, 32561C written
%%/home/faculty/margulies/maple_code/Pfaffian/sig_files1/gi_code/bool_ten
%\]
then the following one 1-arity gate/cogate, three 2-arity gates/cogates, 15 3-arity gates/cogates and 117 4-arity gates/cogates are Pfaffian under the change of basis $(A \otimes \cdots \otimes A)$ and $\big(B^{-1} \otimes \cdots \otimes B^{-1}\big)$, respectively. For convenience, we only list the gates, and the 117 4-arity gates are listed in the appendix.

\noindent Here is the one 1-arity gate: $|0\rangle + |1\rangle$~.\\
Here are the three 2-arity gates: 
$|00\rangle + |11\rangle, 
|10\rangle + |01\rangle, |00\rangle + |10\rangle + |01\rangle + |11\rangle~.$\\
%\begin{align*}
%|00\rangle + |11\rangle~, \quad
%%1001
%|10\rangle + |01\rangle~, \quad
%%0110
%|00\rangle + |10\rangle + |01\rangle + |11\rangle~.
%%1111
%\end{align*}
Here are the 15 3-arity gates:
{\footnotesize{\begin{align*}
|000\rangle + |111\rangle~, \quad
%10000001
|100\rangle + |011\rangle~,&\quad
%01000010
|010\rangle + |101\rangle~, \quad
%00100100
|001\rangle + |110\rangle~,\\
%00011000
|000\rangle + |100\rangle + |011\rangle + |111\rangle~, & \quad
%11000011
|000\rangle + |010\rangle + |101\rangle + |111\rangle~,\\
%10100101
|000\rangle + |001\rangle + |110\rangle + |111\rangle~,& \quad
%10011001
|100\rangle + |010\rangle + |101\rangle + |011\rangle~,\\
%01100110
|100\rangle + |001\rangle + |110\rangle + |011\rangle~,& \quad
%01011010
|010\rangle + |001\rangle + |110\rangle + |101\rangle~,\\
%00111100
|000\rangle + |100\rangle + |010\rangle + |101\rangle + |011\rangle + |111\rangle~,& \quad
%11100111
|000\rangle + |100\rangle + |001\rangle + |110\rangle + |011\rangle + |111\rangle~,
%11011011
\end{align*}
\vspace{-20pt}
\begin{align*}
|000\rangle + |010\rangle + |001\rangle + |110\rangle + |101\rangle + |111\rangle~,& \quad
%10111101
|100\rangle + |010\rangle + |001\rangle + |110\rangle + |101\rangle + |011\rangle~,\\
%01111110
|000\rangle + |100\rangle + |010\rangle + |001\rangle & + |110\rangle + |101\rangle + |011\rangle + |111\rangle~.
%11111111
\end{align*}}}
The 117 4-arity gates are listed in  Appendix \ref{app_thm_bt_pair_hom_4}.
\end{theorem}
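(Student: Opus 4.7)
The plan is to prove the theorem by exhibiting, for each gate $G$ in the stated list, an explicit skew-symmetric matrix $\Xi_G$ and nonzero scalar $\alpha_G \in \mathbb{C}$ such that $(A^{\otimes n})\,G = \alpha_G\,\text{sPf}(\Xi_G)$, with the analogous construction for cogates using $B^{-1}$ and $\text{sPf}^{*}$. Once the certificates are in hand, verification of each identity is a mechanical computation via Eq.~\ref{eq_g_cob} and Definition~\ref{def_sPf}; the substance of the proof lies in producing them.

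First I would dispense with the 1-arity and 2-arity cases by direct hand computation. For example, $A(|0\rangle + |1\rangle) = 2|0\rangle = 2\,\text{sPf}\bigl([0]\bigr)$ already exhibits the certificate, and each of the three 2-arity gates can be verified in a few lines using the explicit $2\times 2$ Kronecker product $A\otimes A$. The \textsc{equal} and \textsc{not} cases moreover already appear in Prop.~\ref{prop_join}, so they may be cited directly.

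For the 15 three-arity gates and 117 four-arity gates, the plan is to exploit a common structural feature: every gate listed is \emph{invariant under complement}, so its kets come in complementary pairs. This invariance is forced by the homogeneous change of basis by the Hadamard-type matrix $A$: conjugation by $A^{\otimes n}$ symmetrizes the coefficients of complementary multi-indices, so a non-complement-invariant gate cannot become a subPfaffian (whose coefficients are Pfaffians of submatrices and hence respect this symmetry). I would therefore organize the 15 three-arity gates into families indexed by the pattern of complement-pairs they contain (a single pair, a triple of pairs, the full tensor, etc.), and present the pair $(\alpha_G, \Xi_G)$ for each family. The 4-arity gates follow the same pattern with more complement-pair combinations, and the full table of 117 certificates is deferred to Appendix~\ref{app_thm_bt_pair_hom_4}.

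The main obstacle is the sheer volume of certificates, which is compounded by the need to verify the cogate analogs as well. The strategy for managing this is twofold. First, the cogate certificates follow from their gate counterparts by the duality built into the construction of $B^{-1}$ relative to $A$ (both arising from the same algebraic setup in Sec.~\ref{sec_alg_meth}), so the work is essentially halved. Second, the certificates themselves need not be derived by hand: by Theorem~\ref{thm_pfaffian_gates_cob}, the existence of a certificate for a given gate under the fixed change of basis $A$ reduces to feasibility of the polynomial system of Eqs.~\ref{eq_parity}--\ref{eq_ainv}, which \textsc{Singular} solves by Gr\"obner basis. Thus the proof reduces to tabulating the output of this computation and observing that each tabulated $(\alpha_G, \Xi_G)$ indeed satisfies $(A^{\otimes n})G = \alpha_G\,\text{sPf}(\Xi_G)$, a verification which is routine once the entries are written down.
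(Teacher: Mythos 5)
Your proposal is correct and matches the paper's approach: per the convention laid out in Sec.~\ref{sec_alg_meth}, Theorem~\ref{thm_bt_pair_hom} is proved by exhibiting machine-found Pfaffian certificates, exactly as you describe, and the paper itself gives no further argument beyond the gate lists. One small refinement worth noting: since $A$ and $B$ are fixed in this theorem, the polynomial system of Theorem~\ref{thm_pfaffian_gates_cob} has no free unknowns left, so no Gr\"obner basis solve is actually needed — checking whether $(A^{\otimes n})G$ satisfies the parity condition (Eq.~\ref{eq_parity}) and consistency condition (Eq.~\ref{eq_consist}) is a direct arithmetic evaluation; your observation that complement-invariance is forced (via $XA^{-1}=A^{-1}Z$ and the even-support of $\text{sPf}(\Xi)$) is a nice necessary condition that the paper only notes empirically.
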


We observe that \emph{every} gate/cogate that pairs with the Boolean trees is invariant under complement. However, \emph{not} every gate/cogate invariant under complement is Pfaffian, let along pairing properly with the Boolean trees.
 
\begin{example} \label{ex_g_comp_prop_fail} We observe that not every gate invariant under complement is Pfaffian under some change of basis. For example, consider the following 4-arity gate:
\begin{align*}
|0000\rangle + |1000\rangle + |0100\rangle + |0010\rangle + |0111\rangle + |1011\rangle + |1101\rangle + |1111\rangle
%g4_cg0_het_eval_1111000000001111.sig
%[sum32@cyberstar g4_cg0_het]
\end{align*}
By an algebraic computation, we know that there do \textbf{not} exist any matrices $A_1,\ldots, A_4 \in \mathbb{C}^{2 \times 2}$ such that the gate is Pfaffian under the change of basis $(A_1 \otimes \cdots \otimes A_4)$. \hfill $\Box$
%For example, consider the 5-arity gate
%\begin{align*}
%|10100\rangle + |10010\rangle + |10001\rangle + |01110\rangle + |01101\rangle + |01011\rangle~.
%\end{align*}
%This gate is clearly invariant under complement. However, by an algebraic computation, we know that there do \textbf{not} exist any matrices $A_1,\ldots, A_5 \in \mathbb{C}^{2 \times 2}$ such that the gate is Pfaffian under the change of basis $(A_1 \otimes \cdots \otimes A_5)$.
%%/home/faculty/margulies/maple_code/Pfaffian/sig_files1/g5_cg0_het
%%out_g5_cg0_het_eval_00000001110000000000001110000000.sig
\end{example}
\vspace{-5pt}
It remains an open question to determine which gates/cogates invariant under complement are Pfaffian under \emph{some} heterogeneous change of basis.

%Our experimental investigations seem to indicate that the reason the two gates from Ex. \ref{ex_g_comp_prop_fail} are not Pfaffian is because they contain more then one ket with ``splitting". For example, the 5-arity gate contains kets $|01101\rangle$ and $|01011\rangle$, both of which are ``split". The 4-arity gate contains kets $|1011\rangle$ and $|1101\rangle$, both of which are ``split". The 117 4-arity gates listed in App. \ref{app_thm_bt_pair_hom_4} seem to tolerate ``splitting" for gates that contain 4-kets or less, but there are no 4-arity gates with ``splitting" containing 6 kets or more. We are hesitant for formulate a conjecture at this time, but we observe that the complement property is obviously deeply related to determining which gates/cogates pair with the Boolean trees.

We recall that Boolean trees are Pfaffian under a heterogeneous change of basis only. Furthermore, Prop. \ref{prop_join} and Cor. \ref{cor_br} together allow ``bridges" to be built between change of basis $A$ and change of basis $B$. In Theorem \ref{thm_bt_pair_hom} we explored a categorization of gates/cogates that pair with the Boolean trees under a homogeneous change of basis. However, since the Boolean trees allow two distinct changes of bases ($A$ and $B$), it is logical to investigate the types of gates that are Pfaffian under this heterogeneous change of basis.

\begin{theorem} \label{thm_bt_pair_het} Given matrices $A, B$ as in Prop. \ref{prop_bool_gcg},
%\[
%A = \left[\begin{array}{cc}
%1 & 1\\
%1 & -1
%\end{array} \right]~, \quad \text{and} \quad
%B = \left[\begin{array}{cc}
%1 & -1\\
%1/2 & 1/2
%\end{array} \right]~,
%%A := [1,-1,1/2,1/2];
%%"check_pfaffian" 1021L, 32561C written
%%/home/faculty/margulies/maple_code/Pfaffian/sig_files1/gi_code/bool_ten
%\]
there exists a matrix $C \in \mathbb{C}^{2 \times 2}$ such that \textbf{each} of the following 3-arity gates/cogates are Pfaffian under the heterogeneous change of basis $(A \otimes B \otimes C)$ (or $A^{-1} \otimes B^{-1} \otimes C^{-1}$, respectively).
\begin{center}
% trim=l b r t
\includegraphics[scale=.20,page=1,clip=true,trim=0 620 0 75]{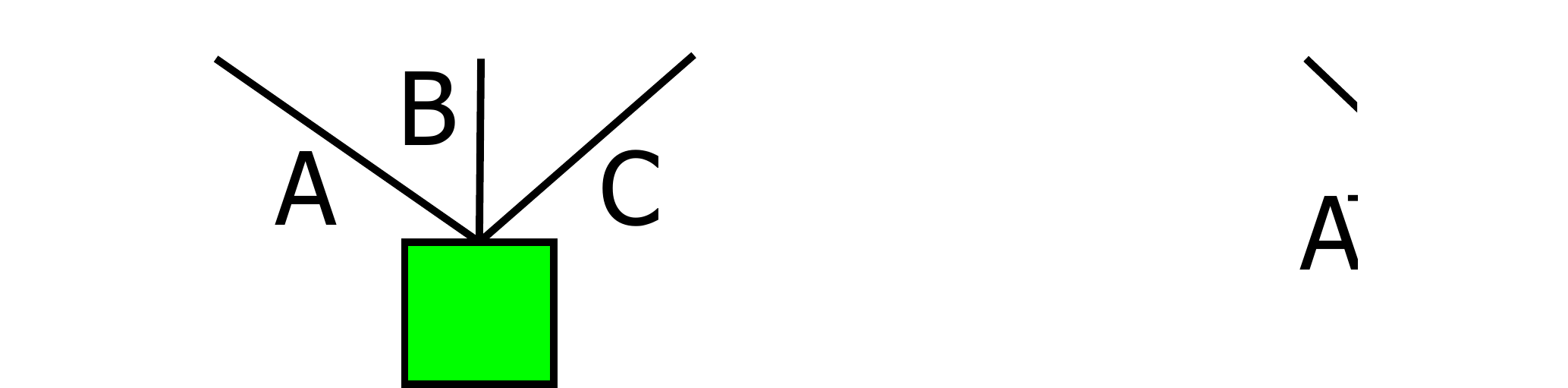}
\hspace{-35.5pt}\includegraphics[scale=.20,page=2,clip=true,trim=0 620 55 75]{bool_pair_con_gcg_het.pdf}
\end{center}
Here are the 3-arity gates:
{\footnotesize{\begin{align*}
% begin gates
%10001001
|000\rangle + |110\rangle + |111\rangle~,\quad 
%10010001
|000\rangle + |001\rangle + |111\rangle~, \quad &
%10011000
|000\rangle + |001\rangle + |110\rangle~, \quad
%00011001
|001\rangle + |110\rangle + |111\rangle~,\\
%00100110
|010\rangle + |101\rangle + |011\rangle~,\quad
%01000110
|100\rangle + |101\rangle + |011\rangle~, \quad&
%01100010
|100\rangle + |010\rangle + |011\rangle~,\quad 
%01100100
|100\rangle + |010\rangle + |101\rangle~,\\
%00111100
%|010\rangle + |001\rangle + |110\rangle + |101\rangle~,\quad&
%|000\rangle + |100\rangle + |011\rangle + |111\rangle~,\\
%00111111
|010\rangle + |001\rangle + |110\rangle + |101\rangle + |011\rangle + |111\rangle~, \quad & 
%01111110
%|100\rangle + |010\rangle + |001\rangle + |110\rangle + |101\rangle + |011\rangle~,\\
%10111101
%|000\rangle + |010\rangle + |001\rangle + |110\rangle + |101\rangle + |111\rangle~,\quad&
%11010111
|000\rangle + |100\rangle + |001\rangle + |101\rangle + |011\rangle + |111\rangle~,\\
%11011011
%|000\rangle + |100\rangle + |001\rangle + |110\rangle + |011\rangle + |111\rangle~,\quad&
%11100111
%|000\rangle + |100\rangle + |010\rangle + |101\rangle + |011\rangle + |111\rangle~,\\
%11101011
|000\rangle + |100\rangle + |010\rangle + |110\rangle + |011\rangle + |111\rangle~,\quad&
%11111100
|000\rangle + |100\rangle + |010\rangle + |001\rangle + |110\rangle + |101\rangle~.
%sol_bi_t_cg4a_123_g3
%[sum32@cyberstar g3_cg0_het_files]$ pwd
%/gpfs/home/sum32/work/maple_code/Pfaffian/sig_files1/bool_ten/tree_bi/g3_cg0_het_files
\end{align*}}}
Here are the 3-arity cogates:
{\footnotesize{
\begin{align*}
% begin cogates
%10001001~,                 
\langle 000| + \langle 110| + \langle 111|~, \quad
%10010001~,                 
\langle 000| + \langle 001| + \langle 111|~, \quad&
%10011000~,             
\langle 000| + \langle 001| + \langle 110|~,\quad
%00011001~,
\langle 001| + \langle 110| + \langle 111|~,\\
%00100110~,
\langle 010| + \langle 101| + \langle 011|~, \quad
%01000110~,
\langle 100| + \langle 101| + \langle 011|~, \quad&
%01100010~,
\langle 100| + \langle 010| + \langle 011|~, \quad
%01100100~,                 
\langle 100| + \langle 010| + \langle 101|~,\\
%01011111~,
\langle 100| + \langle 001| + \langle 110| + \langle 101| + \langle 011| + \langle 111|~, \quad &
%10110111~,
\langle 000| + \langle 010| + \langle 001| + \langle 101| + \langle 011| + \langle 111|~,\\
%11101101~,
\langle 000| + \langle 100| + \langle 010| + \langle 110| + \langle 101| + \langle 111|~, \quad &
%11111010~,
\langle 000| + \langle 100| + \langle 010| + \langle 001| + \langle 110| + \langle 011|~.
%sol_bi_t_cg4_123
%[sum32@cyberstar g0_cg3_het_files]$ pwd
%/gpfs/home/sum32/work/maple_code/Pfaffian/sig_files1/bool_ten/tree_bi/g0_cg3_het_files
\end{align*}}}
\end{theorem}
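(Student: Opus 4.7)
The plan is to apply Theorem \ref{thm_pfaffian_gates_cob} (and its cogate analogue) simultaneously to all twenty-four listed tensors, fixing $A_1 = A$ and $A_2 = B$ from Prop. \ref{prop_bool_gcg} and leaving $C := A_3$ symbolic in its four entries. Because each listed tensor is 3-arity, the subsets $I' \subseteq [3]$ satisfy $|I'| \leq 3$, so the consistency equations (Eq. \ref{eq_consist}), which require $|I'| \geq 4$, are vacuous. For each individual gate $G$, the system thus reduces to four parity equations (Eq. \ref{eq_parity}, one for each of $\{1\}$, $\{2\}$, $\{3\}$, $\{1,2,3\}$), the defining equation for $G'_{\emptyset}$ (Eq. \ref{eq_em}), and the inversion equations (Eqs. \ref{eq_eminv}, \ref{eq_ainv}), yielding an ideal in the entries of $C$ together with the auxiliary variables $\big(G'_{\emptyset}\big)^{-1}$ and $\det(C)^{-1}$.

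The central computation is to take the \emph{union} of these twenty-four ideals and verify via \textsc{Singular} that its Gröbner basis does not contain the constant $1$, certifying the existence of a common $C$. From the reduced Gröbner basis one reads off the explicit entries of $C$, and then reconstructs a Pfaffian certificate for each gate individually: the scalar is $\alpha = G'_{\emptyset}$ and the $3 \times 3$ skew-symmetric matrix $\Xi$ has entries $\xi_{ij} = \alpha^{-1} G'_{\{i,j\}}$ (where the $G'$ coefficients are computed from Eq. \ref{eq_g_cob} with the now-known $A$, $B$, $C$). The cogate half of the theorem is treated identically, starting from the cogate analogue of Theorem \ref{thm_pfaffian_gates_cob} and using $A^{-1}$, $B^{-1}$, $C^{-1}$ in place of $A$, $B$, $C$; since the twelve listed cogates are structurally the bra-versions of the twelve listed gates, one expects the same (or a compatibly structured) $C$ to emerge.

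The principal obstacle is the \emph{simultaneous} solvability: exhibiting some $C$ for any one of the twenty-four tensors in isolation is a short Singular computation, but the theorem's content is that a single $C$ suffices for all of them at once. This is addressed directly by computing the Gröbner basis of the unioned ideal, rather than solving gate-by-gate and hoping for coincidence. A secondary but necessary check is the invertibility of $C$, which is enforced by the presence of Eq. \ref{eq_ainv} in the combined system and verified post-hoc by computing $\det(C)$ of the exhibited matrix. Once the explicit $C$ and the twenty-four pairs $(\alpha_i, \Xi_i)$ are produced, the proof reduces to the routine verification that $(A \otimes B \otimes C) G_i = \alpha_i\,\text{sPf}(\Xi_i)$ (and the analogous equality for cogates) holds coefficient-by-coefficient.
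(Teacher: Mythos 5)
Your overall methodology --- fix $A_1 = A$, $A_2 = B$ from Prop.~\ref{prop_bool_gcg}, leave $C = A_3$ symbolic, instantiate Theorem~\ref{thm_pfaffian_gates_cob} (and its cogate analogue) for every listed tensor, observe that the consistency equations are vacuous at arity~3, and then combine the resulting ideals and take a Gr\"obner basis in \textsc{Singular} --- is exactly the algebraic-computational program the paper sets out in Sec.~\ref{sec_alg_meth}, and you correctly identify that the non-trivial content is \emph{simultaneous} solvability for a single $C$ rather than solvability one gate at a time. The paper itself prints no proof of this theorem, so there is no explicit certificate to check against; your reconstruction is the right shape. Two small corrections on language: you should combine the systems by taking the \emph{sum} of the ideals (the ideal generated by the union of all generators), since that is what cuts out the intersection of the varieties; and note that while the entries of $C$ and $\det(C)^{-1}$ are shared across all two dozen subsystems, each tensor must carry its \emph{own} auxiliary $(G'_{\emptyset})^{-1}$-variable, since the leading coefficient scales differently from gate to gate.

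The genuine error is your assertion that ``the twelve listed cogates are structurally the bra-versions of the twelve listed gates.'' They are not, and the paper stresses this immediately after the theorem statement. Cogates 1--8 do match gates 1--8 as bra-versions, but gates 9--12 and cogates 9--12 differ: for instance gate~9 is $|010\rangle + |001\rangle + |110\rangle + |101\rangle + |011\rangle + |111\rangle$ while cogate~9 is $\langle 100| + \langle 001| + \langle 110| + \langle 101| + \langle 011| + \langle 111|$, and similar single-term substitutions occur in items 10--12. The paper highlights this asymmetry as evidence that heterogeneous changes of basis genuinely enlarge the class of admissible constraints, and your reasoning that ``one expects the same $C$ to emerge'' from this supposed symmetry rests on a false premise. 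That the same $C$ does work despite the lists \emph{not} matching is precisely what makes the result interesting, and is why the combined $24$-ideal computation cannot be replaced by heuristic appeal to a bra/ket duality. Your algorithm would still produce the right answer (since it sums all 24 ideals rather than relying on the supposed symmetry), but the justification you offer for the cogate half is wrong and obscures the paper's point.
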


We conclude this section by observing that Theorem \ref{thm_bt_pair_het} on its own is only a partial result. However, just as we were able to use Prop. \ref{prop_join} and Cor. \ref{cor_br} to build bridges from change of basis $A$ to change of basis $B$, we strongly suspect that on a case-by-case basis, it will be possible to build a bridge from change of basis $C$ to changes of bases $A$ and $B$, respectively. In that case, each of the gates/cogates listed in Theorem \ref{thm_bt_pair_het} would pair with the Boolean trees, and the set of building blocks for 0/1 \#CSP solvable in polynomial-time will have increased.

We also observe that the gates/cogates listed in Theorem \ref{thm_bt_pair_het} are not the same. For example, gate $|010\rangle + |001\rangle + |110\rangle + |101\rangle + |011\rangle + |111\rangle$ is Pfaffian, but the corresponding cogate is not. However, cogate $\langle 100| + \langle 001| + \langle 110| + \langle 101| + \langle 011| + \langle 111|$ is Pfaffian, and we observe that the only difference between the two is in their first term; the gate contains ket $|010 \rangle$ and the cogate contains bra $\langle  100 |$, which is a difference of a single set element. This is the defining characteristic of the different gates and cogates, but formulating the theoretical principle remains elusive. However, we highlight that since the gates/cogates in Theorem \ref{thm_bt_pair_het} are different, by positing the existence of a third matrix $C$, we have actually expanded the number of allowable constraints. Therefore, by considering heterogeneous changes of bases, we have increased the computing 
power of Pfaffian circuits. Finally, we observe that the Boolean trees are an example of a gate/cogate (see Prop. \ref{prop_bool_gcg}) that can be modeled as a collection of gates/cogates under a heterogeneous change of basis only. This observation plays directly into the next section, and is an example of a fundamental question posed in this paper: if a given gate/cogate is \emph{not} Pfaffian, does there exists a combinatorial structure of gates and cogates that \emph{is} Pfaffian under some heterogeneous change of basis?
% from jason, cut by SM, the tensor is NOT Pfaffian (although the  tensor represented by the fragment may be Pfaffian after a homogeneous change of basis)

%%%%%%%%%%%%%%%%%%%%%%%%%%%%%%%%%%%%%%%%%%%%%%%%%%%
\section{Planarity and Tensor Contraction Networks} \label{sec_swap} The Pfaffian circuit evaluation theorem (Theorem \ref{thm_pfaff_eval}) applies only to \emph{planar}, Pfaffian circuits with an even number of edges. Since planarity is a key hypothesis of Theorem \ref{thm_pfaff_eval} , it is well-worth discussing the barriers to transforming a non-planar circuit into a planar one. We observe that there are no inherent complexity theoretic blocks to non-planar Pfaffian circuits, since the \emph{Pfaffian} gates/cogates under that construction would be an extremely specific set. In this section, we reduce the notion of a swap gate to a tensor contraction network fragment with two extra ``dangling'' edges (the dangling end is not connected to any vertex), one of which must be  fixed to the $\langle 1|$ state and the other of which must be fixed to the $\langle 0|$ state.

The \textsc{swap} gate is a 4-arity gate (2 input, 2 output) that simply swaps the order of the input.

\begin{minipage}{.35\linewidth}
\begin{center}
% trim=l b r t
\hspace{-10pt}\includegraphics[scale=.22,clip=true,page=1,trim=80 560 80 80]{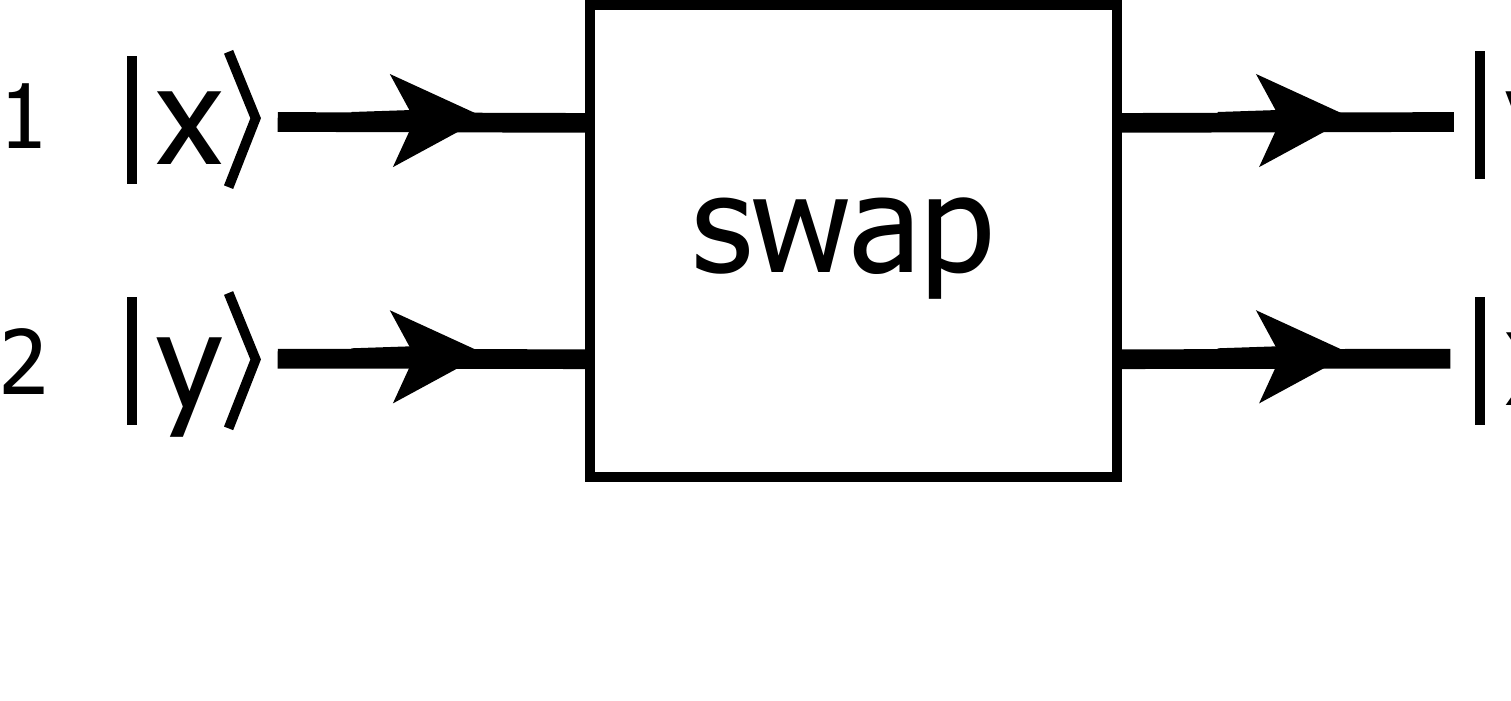}
\hspace{-4pt}\includegraphics[scale=.22,clip=true,page=2,trim=80 560 455 80]{swap_xy_lbl.pdf}
\end{center}
\end{minipage}
\hspace{-10pt}\begin{minipage}{.65\linewidth}
\vspace{-8pt}
\begin{align*}
% |\text{xy,xy}\rangle
\textsc{swap} = |\text{xy,xy}\rangle &= |0_10_2,0_30_4\rangle + |0_11_2,0_31_4\rangle\\
&\phantom{=}\hspace{9pt} + |1_10_2,1_30_4\rangle + |1_11_2,1_31_4\rangle~. %\label{eq_qubit_swap}
\end{align*}
\vspace{1pt}
\end{minipage}

\vspace{-10pt}
The \textsc{swap} gate/cogate can be used to untangle an edge crossing, and turn a \emph{non-planar} tensor contraction network into a \emph{planar} tensor contraction network.
\begin{example} \label{ex_cross} In this example, we demonstrate how an edge crossing in a tensor contraction network can be untangled by a \textsc{swap} gate/cogate.

\begin{figure}[h!]
\vspace{-5pt}
\begin{minipage}{.50\linewidth}
\begin{center}
% trim=l b r t
\includegraphics[scale=.20,clip=true,trim=0 590 120 80]{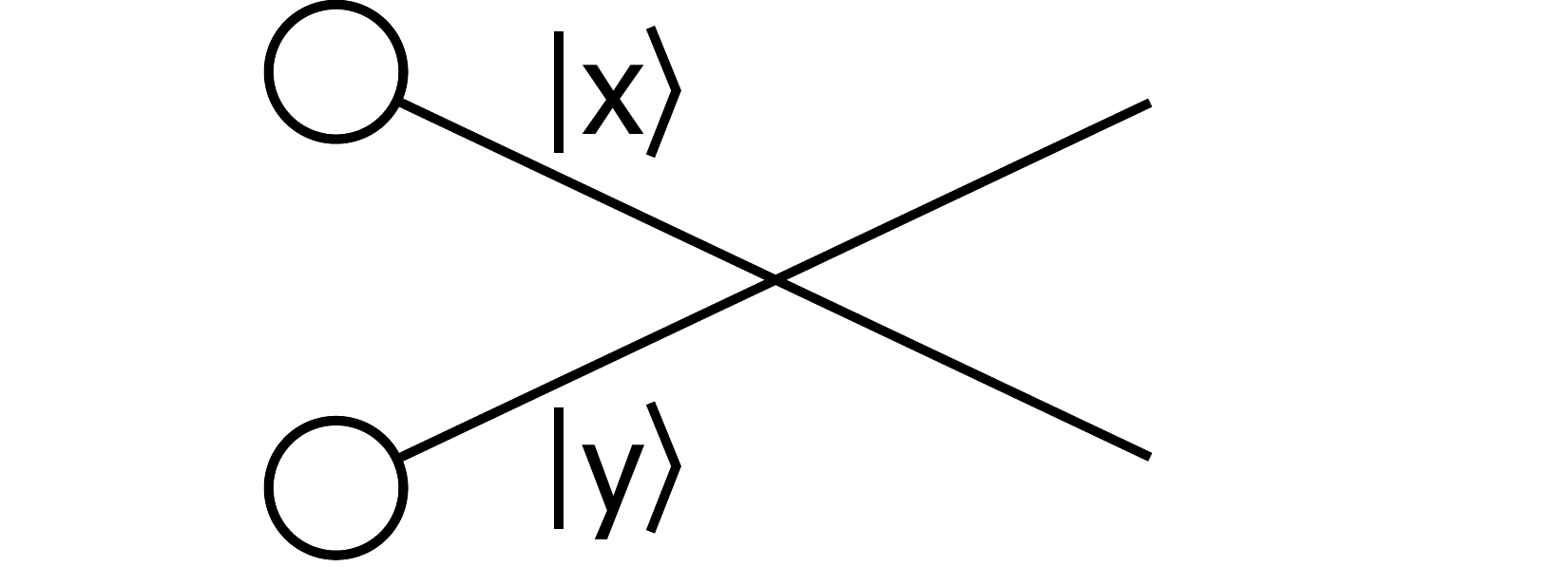}
\end{center}
\vspace{-1pt}
%\caption*{(a)}
\hspace{87pt}(a)
\end{minipage}
\hspace{-20pt}
\begin{minipage}{.50\linewidth}
\begin{center}
% trim=l b r t
\includegraphics[scale=.20,clip=true,trim=0 578 50 80]{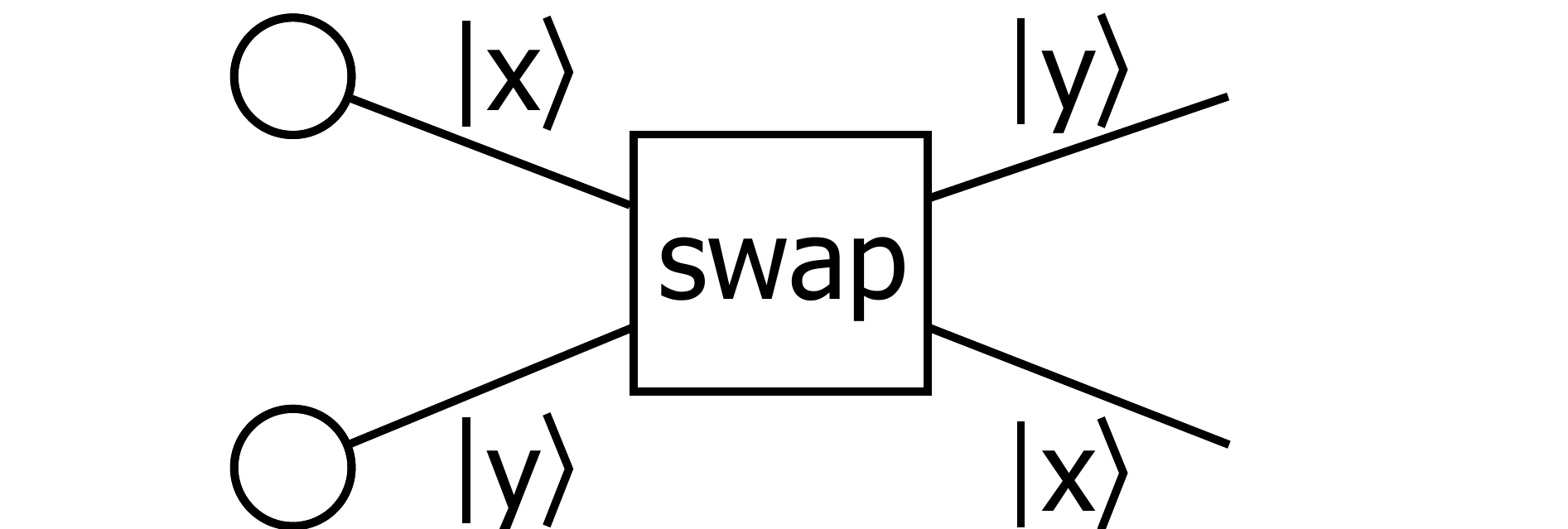}
\end{center}
\vspace{-1pt}
%\caption*{(b)}
\hspace{90pt}(b)
\end{minipage}
\end{figure}

\vspace{-10pt}
In (a), we see a crossing where the edges are in state $|x\rangle$ and $|y\rangle$. In (b), we see a crossing replaced with a \textsc{swap} gate. We observe that when a crossing is replaced with the a \textsc{swap} gate (or cogate), the three properties of a Pfaffian circuit must be preserved: (1) bipartite, (2) Pfaffian and (3) contains an even number of edges. \hfill $\Box$
\end{example}
\newpage
\begin{theorem} \label{thm_swap_het}
$\phantom{xxx}$
\begin{enumerate}
	\item There do \textbf{not} exist matrices $A,B,C,D \in \mathbb{C}^{2 \times 2}$ such that the \textsc{swap} gate is Pfaffian under the change of basis $(A \otimes B \otimes C \otimes D)$.

\vspace{-2pt}
\begin{minipage}{.35\linewidth}
\begin{center}
% trim=l b r t
\hspace{-15pt}\includegraphics[scale=.20,page=1,clip=true,trim=50 560 50 90]{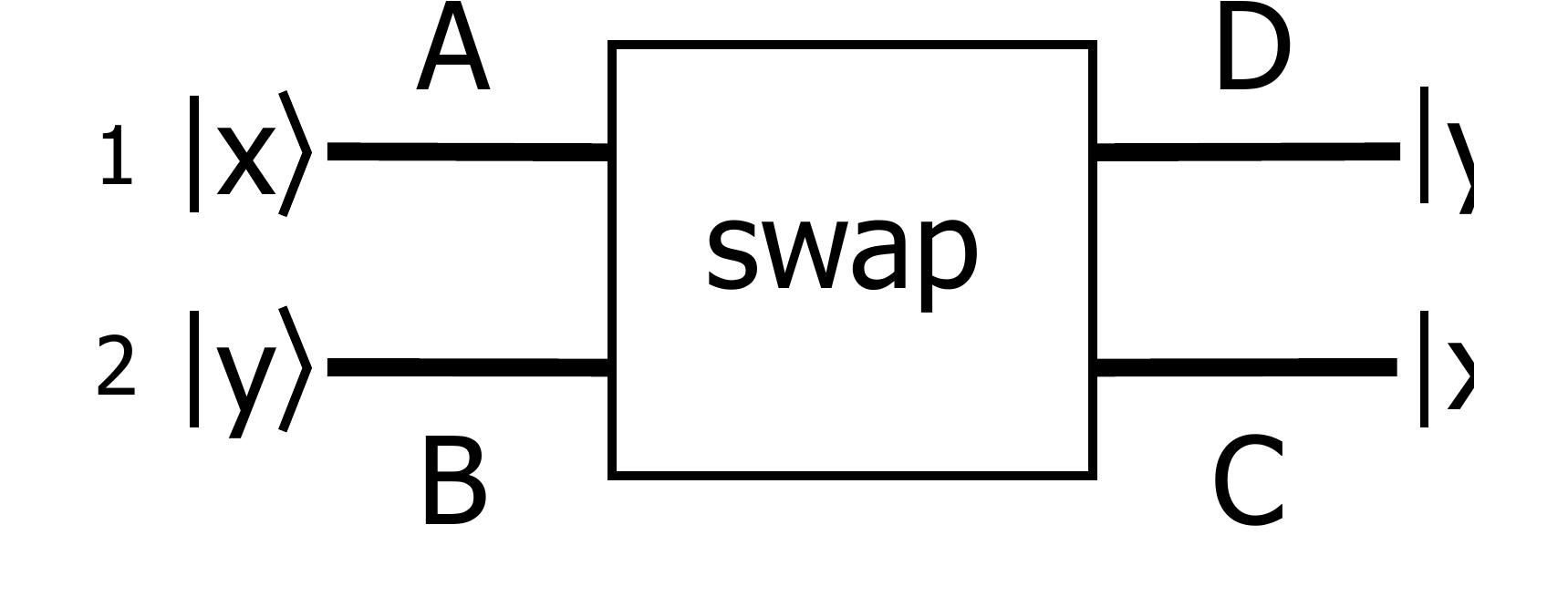}
\hspace{-16pt}\includegraphics[scale=.20,page=2,clip=true,trim=50 560 465 90]{swap_xy_het_nolbl.pdf}
\end{center}
\end{minipage}
\begin{minipage}{.65\linewidth}
\begin{align*}
\textsc{swap}=|\text{xy,xy}\rangle &= |0_10_2,0_30_4\rangle + |0_11_2,0_31_4\rangle\\
&\phantom{=}\hspace{9pt} + |1_10_2,1_30_4\rangle + |1_11_2,1_31_4\rangle~. %\label{eq_qubit_swap}
\end{align*}
\vspace{1pt}
\end{minipage}
		\item There do \textbf{not} exist a matrices $A,B,C,D \in \mathbb{C}^{2 \times 2}$ such that the \textsc{swap} cogate is Pfaffian under the change of basis $(A^{-1} \otimes B^{-1} \otimes C^{-1} \otimes D^{-1})$. 

\vspace{-2pt}
\begin{minipage}{.35\linewidth}
\begin{center}
% trim=l b r t
\hspace{-13pt}\includegraphics[scale=.20,page=1,clip=true,trim=83 560 59 87]{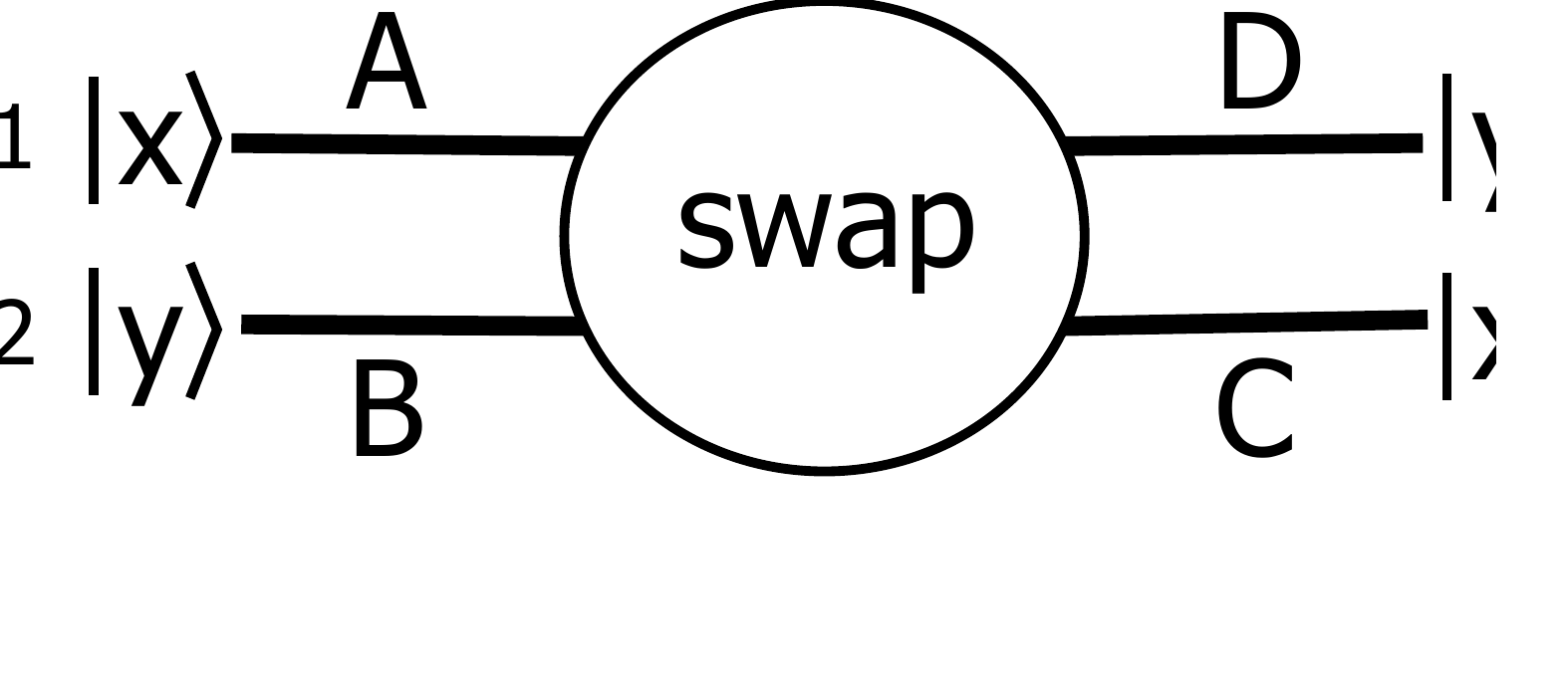}
\hspace{-8pt}\includegraphics[scale=.20,page=2,clip=true,trim=80 560 467 87]{swap_cg_xy_het_nolbl.pdf}
\end{center}
\end{minipage}
\begin{minipage}{.65\linewidth}
\begin{align*}
\textsc{swap}=\langle\text{xy,xy}| &= \langle 0_10_2,0_30_4| + \langle 0_11_2,0_31_4|\\
&\phantom{=}\hspace{9pt} + \langle 1_10_2,1_30_4| + \langle 1_11_2,1_31_4|~. %\label{eq_qubit_swap}
\end{align*}
\vspace{1pt}
\end{minipage}
\end{enumerate}
\end{theorem}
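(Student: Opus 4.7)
The plan is to exploit the matrix-reshape rigidity of $\textsc{swap}$. If we group wires $\{1,2\}$ as rows and $\{3,4\}$ as columns, the $\textsc{swap}$ tensor reshapes to the $4 \times 4$ identity $I_4$. Under the change of basis $A \otimes B \otimes C \otimes D$, the transformed tensor then reshapes to $(A \otimes B)\, I_4\, (C \otimes D)^T = (AC^T) \otimes (BD^T)$, a Kronecker product $P \otimes Q$ of two invertible $2 \times 2$ matrices. The key step is to show that this rigid product structure is incompatible with the matrix reshape of any nonzero subPfaffian $\alpha\,\text{sPf}(\Xi)$.

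First I would write out the reshape of $\alpha\,\text{sPf}(\Xi)$ in the same grouping: its entry at position $(i_1 i_2, j_1 j_2)$ is $\alpha\,\text{Pf}(\Xi|_I)$, where $I$ is the subset with characteristic bitstring $i_1 i_2 j_1 j_2$. The parity condition (Eq.~\ref{eq_parity}) forces zeros at the eight positions where $i_1 + i_2 + j_1 + j_2$ is odd. Matching four of these --- $(00,01),(00,10),(01,00),(10,00)$ --- against the corresponding entries of $P \otimes Q$, together with the nonvanishing of $p_{00}q_{00} = \alpha$, immediately forces $p_{01} = p_{10} = q_{01} = q_{10} = 0$. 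Hence both $P$ and $Q$ are diagonal, so $P \otimes Q$ is diagonal; this kills four further entries of the subPfaffian reshape and forces $\xi_{12} = \xi_{14} = \xi_{23} = \xi_{34} = 0$, whence $\text{Pf}(\Xi) = -\xi_{13}\xi_{24}$.

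The four surviving diagonal equations then give $\det(P)\det(Q) = (p_{00}q_{00})(p_{11}q_{11}) = -\alpha^2 \xi_{13}\xi_{24}$ and simultaneously $\det(P)\det(Q) = (p_{00}q_{11})(p_{11}q_{00}) = +\alpha^2 \xi_{13}\xi_{24}$, so $\xi_{13}\xi_{24} = 0$. Whichever factor vanishes forces $p_{11} = 0$ or $q_{11} = 0$, contradicting invertibility of $P$ or $Q$. The cogate case (part 2) is the identical argument after replacing each $A_i$ with $A_i^{-1}$, since the $\textsc{swap}$ cogate also reshapes to $I_4$ as a bra and $\text{sPf}^*$ has the same matrix form as $\text{sPf}$ up to dualization. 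The main obstacle is largely bookkeeping: tracking the reshape convention and the subPfaffian-to-matrix dictionary so that the eight parity zeros land in the correct positions. A purely computational alternative is to feed Theorem~\ref{thm_pfaffian_gates_cob}'s system into \textsc{Singular} and verify that the Gr\"obner basis of the associated ideal reduces to $\{1\}$, paralleling the approach used for Theorem~\ref{thm_bool_tree}.1.
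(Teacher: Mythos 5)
Your argument is correct and is a genuinely different route from the paper. The paper's proof is entirely computational: it feeds the system of Theorem~\ref{thm_pfaffian_gates_cob} into \textsc{Singular} and verifies that the Gr\"obner basis is $\{1\}$, which certifies an empty variety but gives no structural insight. Your approach instead exploits the reshape rigidity of the tensor. Writing $G'_{i_1 i_2 i_3 i_4} = p_{i_1 i_3}\, q_{i_2 i_4}$ with $P = AC^{T}$ and $Q = BD^{T}$, the four weight-one parity zeros plus $p_{00}q_{00} = \alpha \neq 0$ force $P,Q$ diagonal; the remaining zeros kill $\xi_{12},\xi_{14},\xi_{23},\xi_{34}$; and the four surviving diagonal equations
\begin{align*}
p_{00}q_{00} = \alpha,\quad p_{11}q_{00} = \alpha\xi_{13},\quad p_{00}q_{11} = \alpha\xi_{24},\quad p_{11}q_{11} = -\alpha\xi_{13}\xi_{24}
\end{align*}
give $\xi_{13}\xi_{24} = -\xi_{13}\xi_{24}$, hence $\xi_{13}\xi_{24}=0$ and a contradiction with $\det P \det Q \neq 0$. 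I verified every step and it checks out. What the analytic proof buys is an actual explanation: it shows that the obstruction is exactly the incompatibility between the rank-one (Kronecker) structure imposed by the invertible change of basis and the checkerboard-plus-consistency constraints of a subPfaffian, and it transparently survives scalar multiples. What the paper's Gr\"obner route buys is uniformity --- the same machinery handles every gate in the catalog without case-specific cleverness --- which is why the authors adopt it throughout.

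One small caution: you dismiss the cogate case as ``identical,'' but it is a parallel rather than verbatim argument. For $\text{sPf}^{*}(\Theta)$ the nonvanishing anchor is the coefficient of $\langle 1111|$ (equal to $\beta\,\text{Pf}(\Theta|_{\emptyset}) = \beta$), not $\langle 0000|$ (which is $\beta\,\text{Pf}(\Theta)$ and could \emph{a priori} vanish). Consequently you must use the weight-three parity zeros, anchored on $r_{11},s_{11}\neq 0$, to force $R = (A^{-1})^{T}C^{-1}$ and $S = (B^{-1})^{T}D^{-1}$ diagonal, and the subset in $\text{Pf}(\Theta|_{J^{C}})$ is the complement of the bra index, which swaps which $\theta_{ij}$ are annihilated. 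The argument still closes in the same way, but it is worth spelling out rather than asserting duality.
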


\vspace{-25pt}
\begin{proof} The proof of Theorems \ref{thm_swap_het}.1 and 2 are by Gr\"obner basis computations that run in 1 sec and 26 sec, respectively.
% bourbaki
%/home/faculty/margulies/maple_code/Pfaffian/sig_files1/g4_cg0_het
%-bash-4.1$ Singular g4_cg0_het_eval_1000001001000001.sig 
%                     SINGULAR                                 /
% A Computer Algebra System for Polynomial Computations       /   version 3-1-2
%                                                           0<
% by: W. Decker, G.-M. Greuel, G. Pfister, H. Schoenemann     \   Oct 2010
%FB Mathematik der Universitaet, D-67653 Kaiserslautern        \
%Running... 1000001001000001 |0000> + |1010> + |0101> + |1111>
%j1[1]=1
%/home/faculty/margulies/maple_code/Pfaffian/sig_files1/g0_cg4_het
%-bash-4.1$ Singular g0_cg4_het_eval_1000001001000001.sig 
%                     SINGULAR                                 /
% A Computer Algebra System for Polynomial Computations       /   version 3-1-2
%                                                           0<
% by: W. Decker, G.-M. Greuel, G. Pfister, H. Schoenemann     \   Oct 2010
%FB Mathematik der Universitaet, D-67653 Kaiserslautern        \
%Running... 1000001001000001 cogate |0000> + |1010> + |0101> + |1111>
%j1[1]=1 
\end{proof}
\vspace{-6pt}
%, in which explicit invariants are given that separate the orbits
We pause to note that, subsequent to this computation, a proof of Theorem \ref{thm_swap_het} via invariant theory (and computation) was found in \cite{turner2013some}. Having demonstrated that the \textsc{swap} gate/cogate is not Pfaffian, the question then arises as to whether or not the behavior of the \textsc{swap} tensor can be mimicked by a larger collection of Pfaffian gates/cogates. For example, it is certainly well-known that the behavior of the \textsc{swap} operation can be mimicked by chaining three \textsc{cnot}-gates together, but in order to embed that structure in a Pfaffian circuit, each of the three \textsc{cnot} gates/cogates must be simultaneously Pfaffian.

The \textsc{cnot} gate is a 4-arity gate (2 input, 2 output), where one qubit acts as a ``control" (commonly denoted as $\bullet$) and the second qubit is flipped based on whether or not the control qubit is one or zero (commonly denoted as $\oplus$). Since the orientation of the gate in the plane defines the order of the tensor, we define two gates, \textsc{cnot1} and \textsc{cnot2}, depending on the location of the control bit. 

\begin{defi} \label{def_cnot}
$\phantom{xxx}$
\begin{enumerate}
	\item Let \textsc{cnot1} be the following gate (the control bit is on the first wire):

\begin{minipage}{.45\linewidth}
\begin{center}
% trim=l b r t
\includegraphics[scale=.22,page=1,clip=true,trim=50 659 80 80]{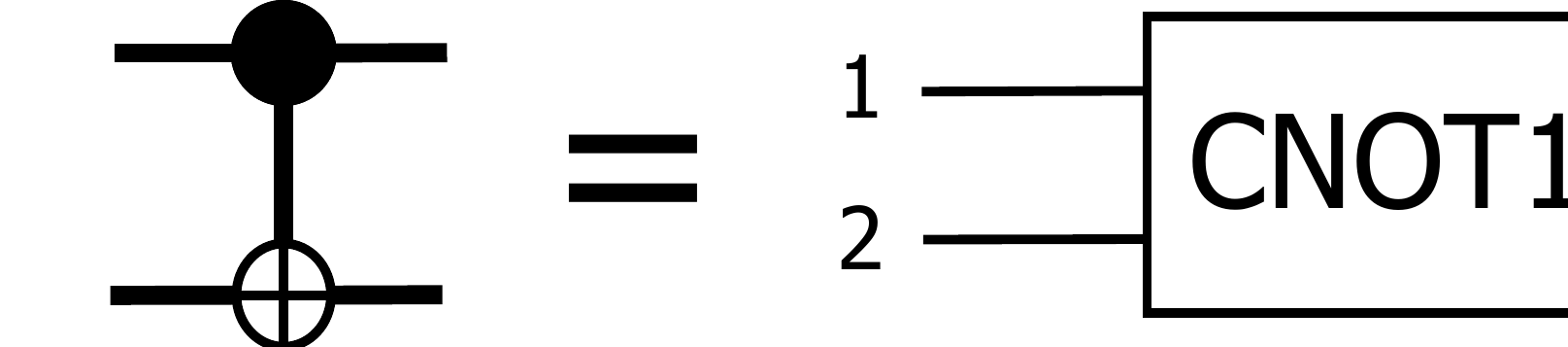}
\hspace{-4pt}\includegraphics[scale=.22,page=2,clip=true,trim=80 659 400 80]{CNOT1_g4.pdf}
\end{center}
\end{minipage}
\begin{minipage}{.55\linewidth}
\vspace{-10pt}
\begin{align*}
\text{\textsc{cnot1}} &= |0_10_2,0_30_4\rangle + |0_11_2,1_30_4\rangle\\
&\phantom{=}\hspace{9pt} + |1_10_2,1_31_4\rangle + |1_11_2,0_314\rangle~.
\end{align*}
\end{minipage}
	\item Let \textsc{cnot2} be the following gate (the control bit is on the second wire):

\begin{minipage}{.45\linewidth}
\begin{center}
% trim=l b r t
\includegraphics[scale=.22,page=1,clip=true,trim=50 658 80 60]{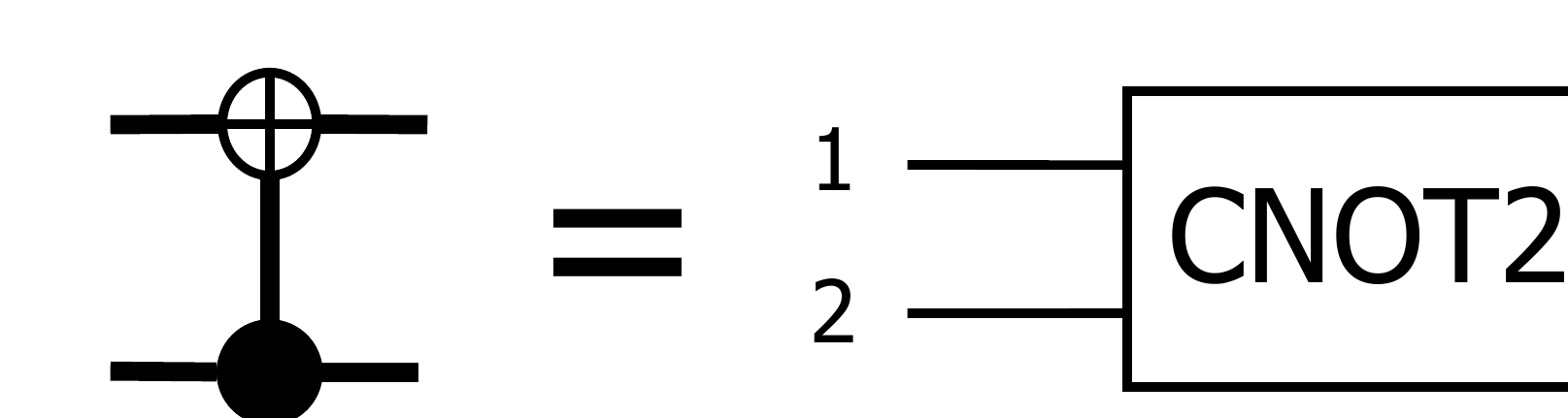}
\hspace{-4pt}\includegraphics[scale=.22,page=2,clip=true,trim=80 658 400 60]{CNOT2_g4.pdf}
\end{center}
\end{minipage}
\begin{minipage}{.55\linewidth}
\vspace{-10pt}
\begin{align*}
\text{\textsc{cnot2}} &= |0_10_2,0_30_4\rangle + |0_11_2,1_31_4\rangle\\
&\phantom{=}\hspace{9pt} + |1_10_2,0_31_4\rangle + |1_11_2,1_30_4\rangle~.
\end{align*}
\end{minipage}
\end{enumerate}
\end{defi}
\begin{example} We will now demonstrate how three \textsc{cnot} gates chained together can simulate a swap gate. In order to preserve the bipartite nature of a Pfaffian circuit, we link the \textsc{cnot} gates together via the \textsc{equal} cogate $\langle 00| + \langle 11 |$ (denoted by $\cgequal$).

\begin{figure}[h!]
\vspace{-5pt}
\begin{minipage}{.49\linewidth}
\begin{center}
% trim=l b r t
\includegraphics[scale=.24,clip=true,trim=0 0 0 0]{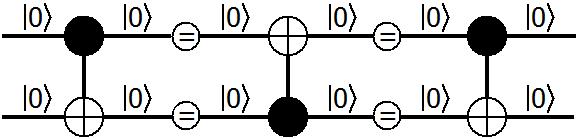}
\end{center}
%\vspace{-4pt}
%\caption*{(a)}
\hspace{88pt}(a)
\end{minipage}
\begin{minipage}{.49\linewidth}
\begin{center}
% trim=l b r t
\includegraphics[scale=.24,clip=true,trim=0 0 0 0]{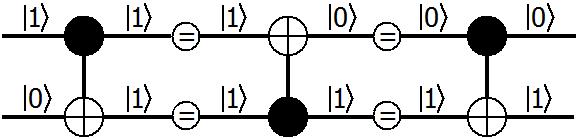}
\end{center}
%\vspace{-4pt}
%\caption*{(b)}
\hspace{88pt}(b)
\end{minipage}\\
\begin{minipage}{.49\linewidth}
\vspace{5pt}
\begin{center}
% trim=l b r t
\includegraphics[scale=.24,clip=true,trim=0 0 0 0]{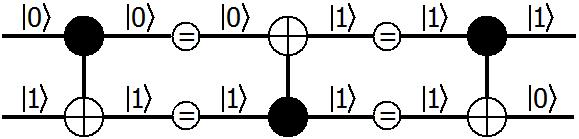}
\end{center}
%\vspace{-8pt}
%\caption*{(c)}
\hspace{88pt}(c)
\end{minipage}
\begin{minipage}{.49\linewidth}
\vspace{5pt}
\begin{center}
% trim=l b r t
\includegraphics[scale=.24,clip=true,trim=0 0 0 0]{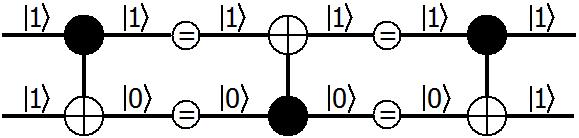}
\end{center}
%\vspace{-8pt}
%\caption*{(d)}
\hspace{88pt}(d)
\end{minipage}
\end{figure}

\vspace{-20pt}
Considering only the final input and output, we see (a) represents $|00,00\rangle$, (b) represents $|10,10\rangle$, (c), represents $|01,01\rangle$ and finally $(d)$ represents $|11,11\rangle$. Thus, the chain \textsc{cnot1-cnot2-cnot1} is equivalent in behavior to the \textsc{swap} tensor. \hfill $\Box$
\end{example}

\begin{theorem} \label{thm_cnot}
$\phantom{xxx}$
\begin{enumerate}
	\item There does \textbf{not} exist any matrix $A \in \mathbb{C}^{2 \times 2}$ such that the \textsc{cnot1} gate is Pfaffian under the homogeneous change of basis $A$.

\begin{minipage}{.35\linewidth}
\begin{center}
% trim=l b r t
\includegraphics[scale=.20,clip=true,trim=50 572 80 92]{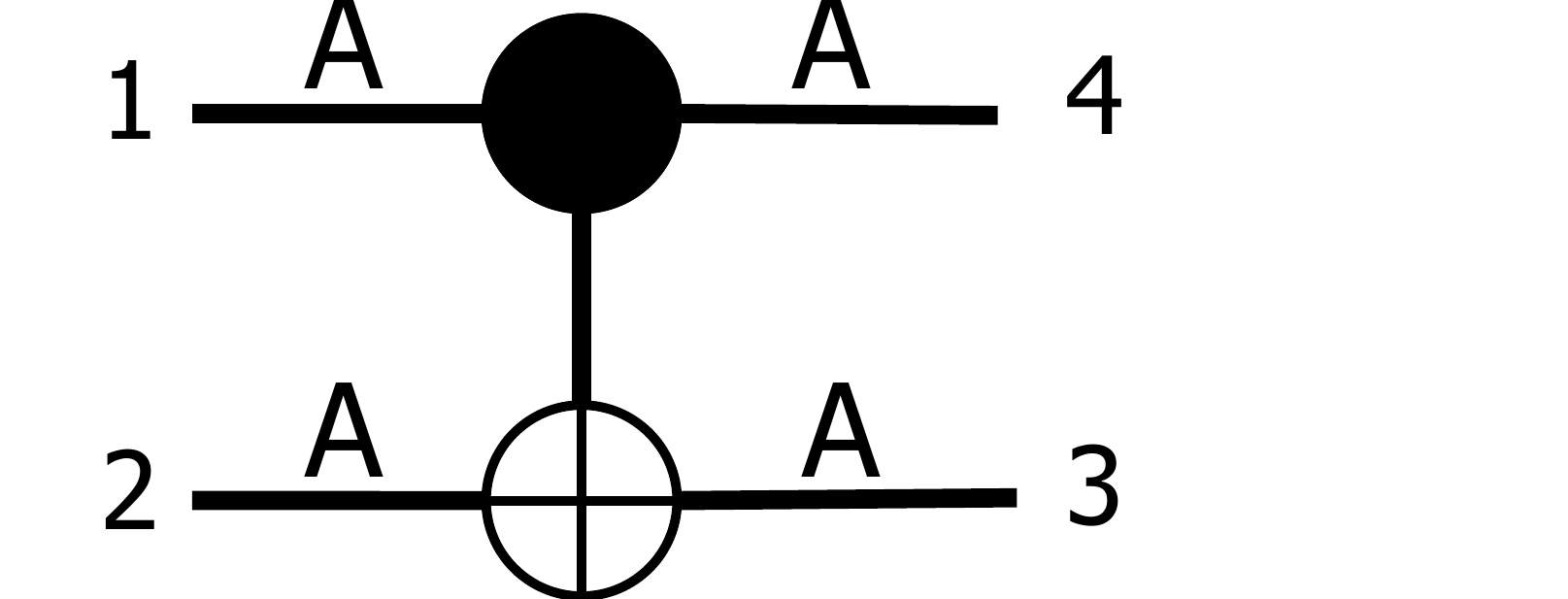}
\end{center}
\end{minipage}
\hspace{-30pt}\begin{minipage}{.65\linewidth}
\begin{align*}
, \quad \quad \text{\textsc{cnot1}} &= |0_10_2,0_30_4\rangle + |0_11_2,1_30_4\rangle\\
&\phantom{=}\hspace{9pt} + |1_10_2,1_31_4\rangle + |1_11_2,0_31_4\rangle~.
\end{align*}
\vspace{1pt}
\end{minipage}
	\item There \textbf{do} exist matrices $A,B,C,D \in \mathbb{C}^{2 \times 2}$ such that the \textsc{cnot1} gate is Pfaffian under the heterogeneous change of basis $A,B,C$ and $D$.

\begin{minipage}{.35\linewidth}
\begin{center}
% trim=l b r t
\includegraphics[scale=.20,clip=true,trim=50 572 80 92]{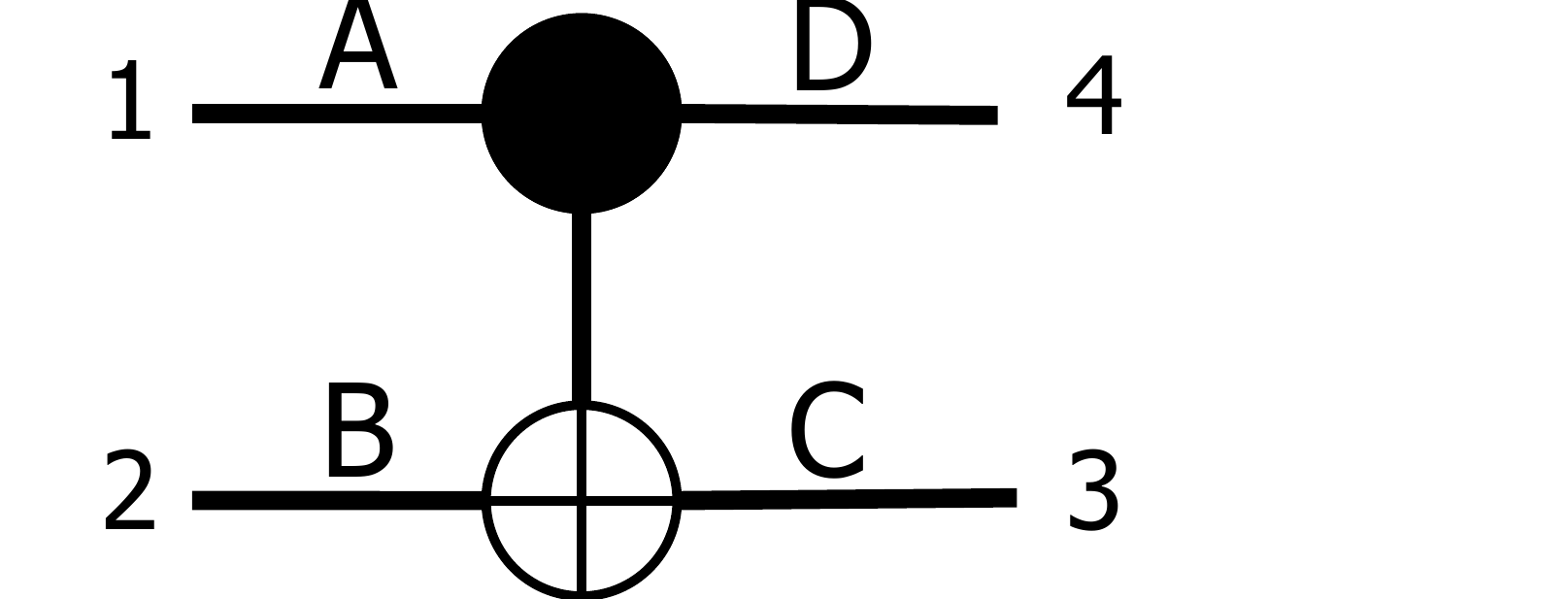}
\end{center}
\end{minipage}
\hspace{-30pt}\begin{minipage}{.65\linewidth}
\begin{align*}
, \quad \quad \text{\textsc{cnot1}} &= |0_10_2,0_30_4\rangle + |0_11_2,1_30_4\rangle\\
&\phantom{=}\hspace{9pt} + |1_10_2,1_31_4\rangle + |1_11_2,0_31_4\rangle~.
\end{align*}
\vspace{1pt}
\end{minipage}
\end{enumerate}
\end{theorem}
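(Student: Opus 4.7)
The plan is to apply Theorem \ref{thm_pfaffian_gates_cob} to translate each part into a question about a system of polynomial equations in the entries of the change-of-basis matrices, and then to reason about those systems either via a Gr\"obner basis computation (for non-existence) or by exhibiting an explicit solution together with a Pfaffian certificate (for existence). The \textsc{cnot1} gate has only four non-zero coefficients, namely $G_I = 1$ for $I \in \{\emptyset,\{2,3\},\{1,3,4\},\{1,2,4\}\}$ and $G_I = 0$ otherwise, so the change-of-basis formula in Eq. \ref{eq_g_cob} specializes to a sum of only four terms per transformed coefficient $G'_{I'}$, keeping the generators of the ideal manageable.

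For Part 1, I set $A_1 = A_2 = A_3 = A_4 = A$ with indeterminate entries $a_{00}, a_{01}, a_{10}, a_{11}$ together with auxiliary indeterminates $G'_\emptyset$, $(G'_\emptyset)^{-1}$, $\det(A)$, $(\det(A))^{-1}$, and form the ideal $J$ generated by: the eight parity equations (Eq. \ref{eq_parity}, one for each $I' \in \{\{1\},\{2\},\{3\},\{4\},\{1,2,3\},\{1,2,4\},\{1,3,4\},\{2,3,4\}\}$), the single consistency equation (Eq. \ref{eq_consist} for $I' = \{1,2,3,4\}$), the defining equation for $G'_\emptyset$ (Eq. \ref{eq_em}), and the inversion equations (Eqs. \ref{eq_eminv} and \ref{eq_ainv}, together with the defining relation $a_{00}a_{11} - a_{01}a_{10} - \det(A) = 0$ as in Example \ref{ex_g0111_gb}). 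I would then compute a reduced Gr\"obner basis of $J$ in \textsc{Singular} under any convenient monomial order; the claim to verify is that this basis is $\{1\}$, so that the variety is empty and no homogeneous $A$ can render \textsc{cnot1} Pfaffian.

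For Part 2, I replace $A$ by four independent matrices $A_1, A_2, A_3, A_4$ (with 16 entries and four $\det/\det^{-1}$ pairs) and form the analogous ideal $J'$; \textsc{Singular} should return a positive-dimensional variety, from which I extract explicit numerical matrices $A, B, C, D$ by specializing the free parameters. I would then package these matrices with a $4 \times 4$ skew-symmetric $\Xi$ and a scalar $\alpha$ into a Pfaffian certificate
\begin{align*}
(A \otimes B \otimes C \otimes D)\,\text{\textsc{cnot1}} &= \alpha\,\text{sPf}(\Xi),
\end{align*}
and verify it by expanding both sides in the standard basis of $\mathbb{C}^{16}$, a direct check of sixteen scalar coefficients.

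The main obstacle is Part 1: although \textsc{Singular} will output the Gr\"obner basis, the result is essentially a machine certificate. To make the non-existence transparent I would try to isolate a short human-checkable obstruction — for example, combining the parity equations coming from the two odd-weight kets already present in \textsc{cnot1} (namely $|1011\rangle$ and $|1101\rangle$) with the consistency equation at $I' = \{1,2,3,4\}$ to force either $G'_\emptyset = 0$ or $\det(A) = 0$, each of which contradicts one of the inversion equations. If such a low-degree obstruction can be extracted by hand, it serves both as an independent check on the \textsc{Singular} output and as a clean explanation of why homogeneity is precisely the bottleneck that Part 2 bypasses by allowing four distinct bases.
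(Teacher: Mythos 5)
Your proposal is essentially the same as the paper's proof: Part 1 is handled by computing a Gr\"obner basis of the ideal from Theorem~\ref{thm_pfaffian_gates_cob} (specialized to a single homogeneous $A$) and observing that it reduces to $\{1\}$, and Part 2 is handled by exhibiting explicit matrices $A,B,C,D$ together with a skew-symmetric $\Xi$ so that $(A\otimes B\otimes C\otimes D)\textsc{cnot1}=\text{sPf}(\Xi)$, verified coefficient by coefficient. Your closing remark about extracting a short human-checkable obstruction from the parity and consistency equations is a reasonable aspiration but goes beyond what the paper actually provides; the paper is content with the machine certificate, so the core argument is identical.
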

\vspace{-15pt}
\begin{proof} The proof of Theorem \ref{thm_cnot}.1 is by a Gr\"obner basis computation that runs in under a minute. For the second part, we present a ``Pfaffian certificate". Consider the following:
%XXX
\vspace{-5pt}
\[
A = \left[\begin{array}{cc}
1 & 1\\[-1.5ex]
-\frac{1}{2} & \frac{1}{2}
\end{array}\right]~, \quad
B = \left[\begin{array}{cc}
0 & 1\\[-1.5ex]
-1 & 0
\end{array}\right]~, \quad
C = \left[\begin{array}{cc}
\frac{1}{2} & -\frac{\mathbf{i}}{2}\\[-1.5ex]
-\mathbf{i} & 1
\end{array}\right]~, \quad
D = \left[\begin{array}{cc}
\mathbf{i} & 1\\[-1.5ex]
-\frac{1}{2} & -\frac{\mathbf{i}}{2}
\end{array}\right]~.
\]
\vspace{-10pt}
\begin{align*}
(A\otimes B \otimes C \otimes D)\textsc{cnot1} &=  |0000\rangle + \frac{\mathbf{i}}{2}|1100\rangle -\mathbf{i}|1010\rangle -\frac{\mathbf{i}}{4}|1001\rangle -2|0110\rangle + \frac{1}{2}|0101\rangle\\
&\phantom{=}  -|0011\rangle + \frac{\mathbf{i}}{2}|1111\rangle~,\\
&= \text{sPf}\left(\left[ \begin{array}{cccc}
 0   &   \frac{\mathbf{i}}{2} &   -\mathbf{i}   & -\frac{\mathbf{i}}{4}\\[-1.5ex]
-\frac{\mathbf{i}}{2}  &    0  &    -2  &   \frac{1}{2}\\[-1.5ex]
 \mathbf{i} &        2   &   0  &     -1\\[-1.5ex]
\frac{\mathbf{i}}{4}  &   -\frac{1}{2} &    1    &   0   
\end{array}\right]\right)~.
\end{align*}  \hfill 
\vspace{-10pt}
%A_list := [[1, 1, -1/2, 1/2], [0, 1, -1, 0], [1/2, -1/2 I, -I, 1],    [I, 1, -1/2, -1/2 I]]
%\vspace{-10pt}
%" + (1)|0000> + (1/2*I)|1100> + (-I)|1010> + (-1/4*I)|1001> + (-2)|0110> + \
%    (1/2)|0101> + (-1)|0011> + (1/2*I)|1111>"
%
%SUCCESS!!!! Pfaffians match!
%                       [  0       1/2 I    -I    -1/4 I]
%                       [                               ]
%                       [-1/2 I      0      -2     1/2  ]
%                       [                               ]
%                       [  I         2      0       -1  ]
%                       [                               ]
%                       [1/4 I     -1/2     1       0   ]
% /home/faculty/margulies/maple_code/Pfaffian/sig_files1/gi_code/bool_ten
% check_pfaffian
\end{proof}

Theorem \ref{thm_cnot} demonstrates that the individual \textsc{cnot1} gate is Pfaffian. However, despite this hopeful combinatorial property, we are unable to chain together \textsc{cnot1-cnot2-cnot1} such that all three gates are simultaneously Pfaffian.
\begin{theorem}\label{thm_cnot_swap}
There do \textbf{not} exist matrices $A,\ldots, K \in \mathbb{C}^{2 \times 2}$ such that \textsc{cnot1-cnot2-cnot1} (linked by \textsc{equal} $\cgequal$  cogates) is Pfaffian under the change of basis $(A \otimes \cdots \otimes K)$.
\begin{center}
% trim=l b r t
\includegraphics[scale=.20,page=1,clip=true,trim=0 570 0 85]{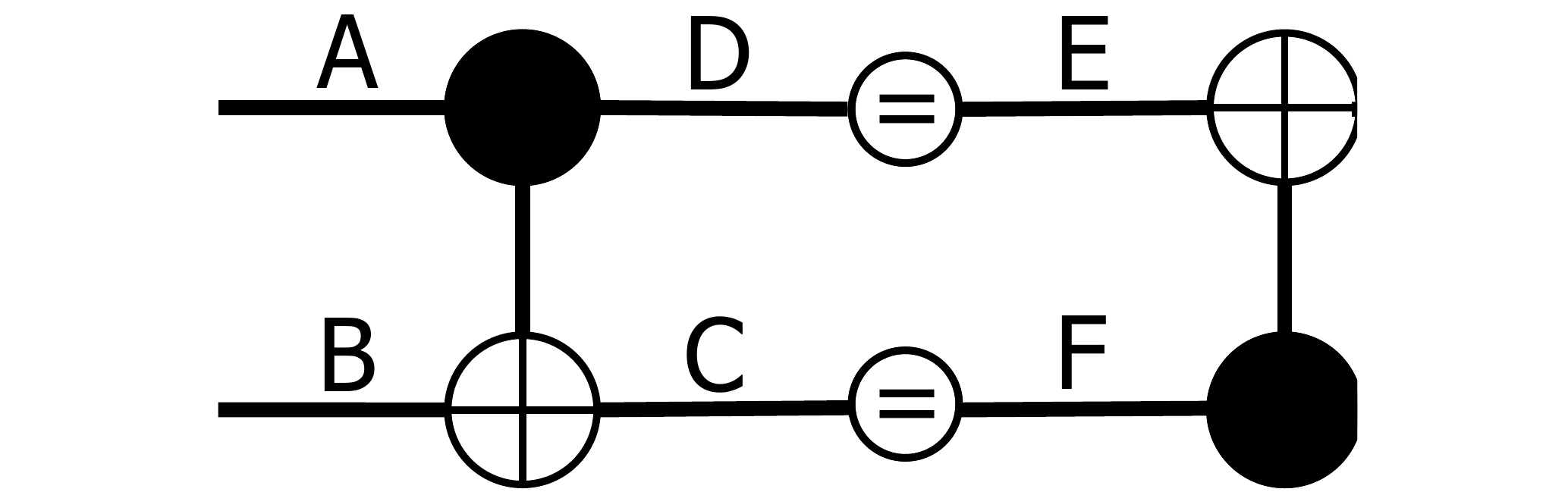}
\hspace{-36pt}\includegraphics[scale=.20,page=2,clip=true,trim=0 570 0 85]{CNOT1_CNOT2_CNOT1_het_A_to_K.pdf}
\end{center}
\end{theorem}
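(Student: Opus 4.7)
The plan is to reduce the statement to a Gr\"obner basis computation, exactly as was done for Theorems \ref{thm_swap_het} and \ref{thm_cnot}.1. Concretely, I would invoke Theorem \ref{thm_pfaffian_gates_cob} (and its cogate analogue) separately for each of the three \textsc{cnot} gates and for each of the linking \textsc{equal} cogates ($\cgequal$), then take the union of the resulting ideals. The variety of that union consists precisely of those tuples of changes of bases $A,B,\ldots,K$ under which \emph{every} gate and cogate in the fragment is simultaneously Pfaffian. Showing this variety is empty is equivalent, by Hilbert's Nullstellensatz, to showing $1$ lies in the ideal, which is what I would certify via \textsc{Singular}.

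First I would label the 11 internal edges with the matrices $A,\ldots,K$, being careful to match up the assignments along each edge so that gates use $A_i$ and the paired cogates use $A_i^{-1}$ on the common edge, as discussed in Sec.~\ref{sec_back_pfaff}. For each \textsc{cnot} gate, I instantiate Eqs.~\eqref{eq_parity}--\eqref{eq_ainv} using the 4-arity ket expressions in Def.~\ref{def_cnot}; for each linking \textsc{equal} cogate $\cgequal$, I instantiate the analogous cogate system using $\langle 00|+\langle 11|$ and the relevant inverse matrices. I would also introduce auxiliary variables for $\det(A)^{-1},\ldots,\det(K)^{-1}$ and for each $G'_\emptyset$ and $C'_\emptyset$ scale, following the pattern of Ex.~\ref{ex_g0111_gb}, so that inversions are encoded polynomially. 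The union of these systems generates the ideal $I$.

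Next I would compute a reduced Gr\"obner basis of $I$ in \textsc{Singular} with respect to a degree-reverse-lexicographic order on the matrix entries. If the output is $\{1\}$, the variety is empty and the theorem follows. As a sanity check, I would verify the output by reducing a few polynomials known a priori to belong to $I$ (for instance, Pfaffian constraints coming from the already-established partial results in Theorem \ref{thm_cnot}) and confirm they reduce to zero modulo the basis.

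The main obstacle will be computational cost. This system has roughly $11 \cdot 4 = 44$ matrix-entry variables plus inversion and scalar auxiliaries, and the parity/consistency equations for arity-4 gates involve degree-4 monomials in those entries, which is substantially larger than the single-\textsc{swap} or single-\textsc{cnot} computations that ran in seconds. If a direct Buchberger or F4/F5 run times out or exhausts memory, I would exploit the reflection symmetry between the two outer \textsc{cnot1} copies to reduce the number of independent variables, pre-normalize one matrix (say $A$) to the identity using the projective freedom in the Pfaffian certificate, and, as a last resort, certify $1 \in I$ modulo several large random primes to at least obtain a probabilistic proof; a proof over $\mathbb{C}$ can then be lifted by noting that if $1 \notin I$ over $\mathbb{Q}$ then a generic specialization over a prime field would also fail to produce $1$.
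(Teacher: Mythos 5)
Your proposal matches the paper's proof: the paper likewise instantiates the polynomial system of Theorem~\ref{thm_pfaffian_gates_cob} (and its cogate analogue) for each of the three \textsc{cnot} gates and each linking \textsc{equal} cogate, forms the combined ideal, and certifies via \textsc{Singular} that its Gr\"obner basis is $\{1\}$.

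Where you diverge is only in the contingency plan for taming the computation. The paper's actual workaround was \emph{staged elimination}: first compute the Gr\"obner basis of the subideal for \textsc{cnot1} together with its adjacent \textsc{equal} cogates (yielding $20{,}438$ polynomials), then the analogous basis for the other outer \textsc{cnot} (yielding $18{,}801$ polynomials), and only then combine those with the ideal for \textsc{cnot2}, at which point the Gr\"obner basis collapses to $\{1\}$. That buys a lot: the subideals share only the boundary matrices, so the intermediate bases act as strong preprocessing before the final join. Your alternative mitigations are less reliable. In particular, the ``pre-normalize $A$ to the identity using projective freedom'' step is not justified: the Pfaffian certificate grants only a scalar $\alpha$ and the (implicit) freedom of row/column rescaling in $\Xi$, not a full $\GL_2$ gauge on a wire, because $(M\otimes I \otimes\cdots)\,\text{sPf}(\Xi)$ is not in general $\alpha'\,\text{sPf}(\Xi')$ for arbitrary invertible $M$. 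Also, the modular-primes argument is stated in the wrong direction for what you need: a mod-$p$ certificate that $1\in I$ gives probabilistic evidence, but the sentence you wrote about $1\notin I$ over $\mathbb{Q}$ lifting to a prime-field specialization argues the contrapositive of the case you care about and does not by itself lift a mod-$p$ witness to $\mathbb{C}$. None of this affects the soundness of the primary method, which is exactly what the paper does.
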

\vspace{-15pt}
\begin{proof} The Gr\"obner basis of the associated ideal is 1. This computation was extremely difficult to run, and many straightforward attempts ran for over 96 hours without terminating. We eventually succeeded by breaking the computation into three separate pieces. We paired \textsc{cnot1} with the corresponding \textsc{equal} cogates (utilizing changes of basis $A,\ldots,F$) and found a Gr\"obner bases consisting of 20,438 polynomials with degrees ranging from 2 to 12 after 2 hours and 44 minutes. Next, we paired \textsc{cnot3} with the corresponding \textsc{equal} cogates (utilizing changes of basis $G,\ldots,L$) and found a Gr\"obner bases consisting of 18,801 polynomials with degrees ranging from 2 to 12. Finally, we combined those two ideals with the ideal for \textsc{cnot2} (on changes of bases $E,\ldots,H$), and found a Gr\"obner basis of 1. This last computation ran in 5 hours and 55 minutes. All computations were run on the Cyberstar high-performance cluster, and allocated 2 GB of RAM.
%/gpfs/home/sum32/work/maple_code/Pfaffian/sig_files1/cnot_swap/ex5
%
%j1[20438]=x(5)*x(6)*x(10)^2*x(19)*x(49)^2*x(57)*x(59)^2*x(75)^2-4*x(3)*x(4)*x(17)*x(22)^2*x(24)^2*x(56)^2*x(73)^2+2*x(3)^2*x(18)*x(22)*x(24)*x(56)^2*x(59)*x(73)^2-x(3)*x(6)*x(10)*x(49)*x(56)*x(57)*x(59)^2*x(73)*x(75);
%// Solution: cnot1 cg (45,36);
%"cnot1_con_gb.sig" 20446L, 3153550C 
%Job started on cyberstar225.hpc.rcc.psu.edu at Fri Jul  6 01:00:05 EDT 2012
%Job Ended at Fri Jul  6 03:44:26 EDT 2012
%"job_cnot1_con.sig.job.o2455214
%
%j1[18801]=x(31)^2*x(37)*x(38)*x(42)^2*x(61)^2*x(81)^2*x(86)^2-2*x(25)*x(28)*x(31)*x(38)*x(42)*x(47)*x(61)*x(66)*x(72)*x(81)*x(86)-2*x(25)*x(26)*x(28)^2*x(47)^2*x(66)^2*x(72)^2+x(26)*x(28)*x(47)^2*x(61)*x(66)^2*x(72)^2;
%// Solution: cg (89,710) cnot3;
%"cnot3_con_gb.sig" 18809L, 3016494C 
%Job started on cyberstar59.hpc.rcc.psu.edu at Fri Jul  6 01:00:17 EDT 2012
%Job Ended at Fri Jul  6 02:25:34 EDT 2012
% job_cnot3_con.sig.job.o2455215 
%
%Job started on cyberstar190.hpc.rcc.psu.edu at Fri Jul  6 07:28:09 EDT 2012
%Job Ended at Fri Jul  6 13:23:57 EDT 2012
%cnot123_con.sig
% job_cnot123_con.sig.job.o2455824
\end{proof}

The construction demonstrated in Theorem \ref{thm_cnot_swap} is an example of a series of gates and cogates that mimic the behavior of \textsc{swap} when linked together. However, as we see from Theorem \ref{thm_cnot_swap}, there do not exist \emph{any} heterogeneous changes of bases such that each of the gates/cogates are simultaneously Pfaffian in this construction. However, not only is the chaining together of three \textsc{cnot} gates just one particular way of simulating \textsc{swap}, but the particular method of chaining the gates together is just one specific chaining construction. Thus, although \textsc{cnot1-cnot2-cnot1} (with \textsc{equal} $\cgequal$ cogates) is not Pfaffian, this algebraic observation does not shed definitive light on the question of whether it is possible to mimic \textsc{swap} using some other construction. We will now demonstrate that it is possible to mimic the behavior of three \textsc{cnot} gates chained together (and therefore, the behavior of \textsc{swap}) with only two 
additional ``dangling" edges. 

\begin{defi} \label{def_cnot12} Let \textsc{cnot12} denote the tensor representing the partial chain \textsc{cnot1-cnot2}.

\begin{minipage}{.45\linewidth}
\begin{center}
% trim=l b r t
\hspace{-20pt}\includegraphics[scale=.25,page=1,clip=true,trim=0 650 0 80]{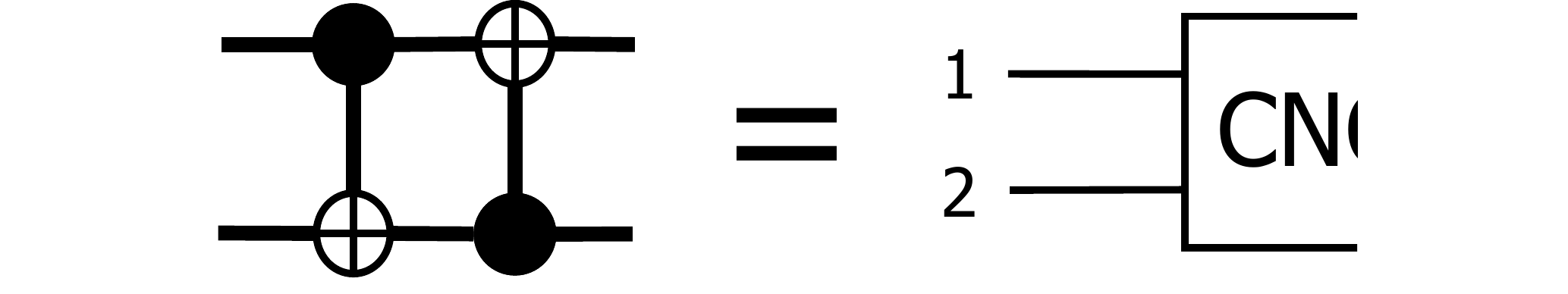}
\hspace{-25pt}\includegraphics[scale=.25,page=2,clip=true,trim=77 650 300 80]{CNOT12_g4.pdf}
\end{center}
\end{minipage}
\begin{minipage}{.45\linewidth}
\vspace{-10pt}
\begin{align*}
\textsc{cnot12} &= |0_10_2,0_30_4\rangle + |1_10_2,1_30_4\rangle\\
&\phantom{=}\hspace{9pt}  + |1_11_2,0_31_4\rangle + |0_11_2,1_31_4\rangle~.
\end{align*}
\end{minipage}
\end{defi}

\begin{theorem}\label{thm_cnot12_br}
There \textbf{do} exist matrices $A, B,\ldots, J \in \mathbb{C}^{2 \times 2}$ such that \textsc{cnot12-cnot1} (linked with two specific 3-arity \textsc{bridge} cogates, denoted $\cgtop$ and $\cgbot$, respectively) is Pfaffian under the heterogeneous change of basis $(A \otimes \cdots \otimes J)$.
\begin{center}
% trim=l b r t
\hspace{80pt}\includegraphics[scale=.22,page=1,clip=true,trim=0 590 0 75]{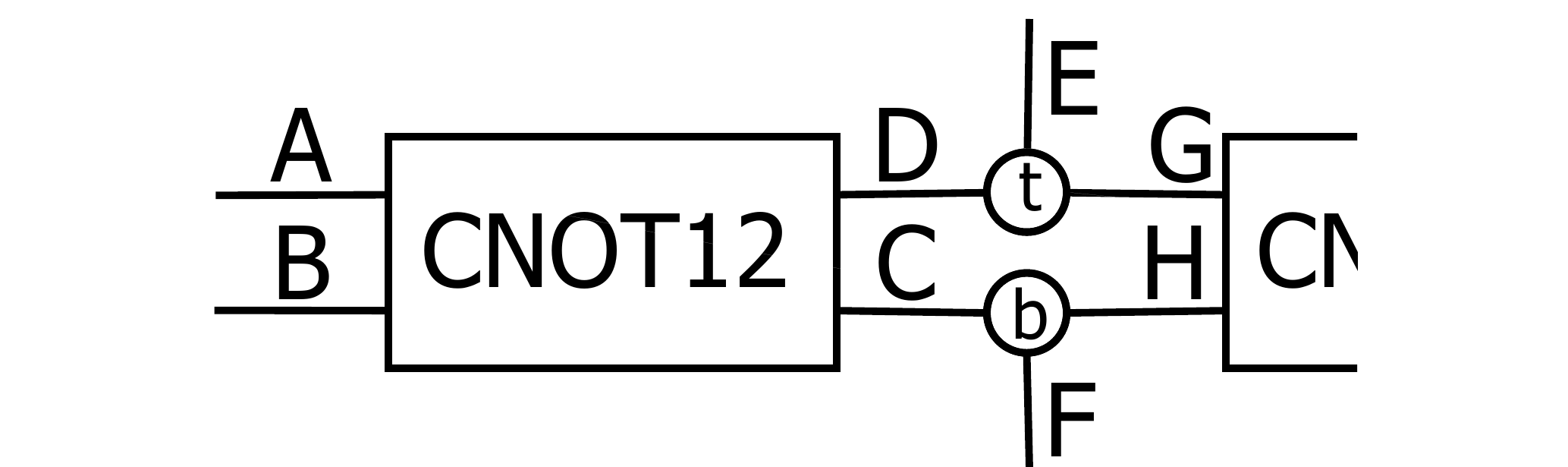}
\hspace{-39pt}\includegraphics[scale=.22,page=2,clip=true,trim=0 590 0 75]{CNOT12_1_br.pdf}
\end{center}
where $\cgtop$ and $\cgbot$ are the following cogates:

\begin{minipage}{.50\linewidth}
\begin{center}
% trim=l b r t
\includegraphics[scale=.21,clip=true,trim=75 635 300 75]{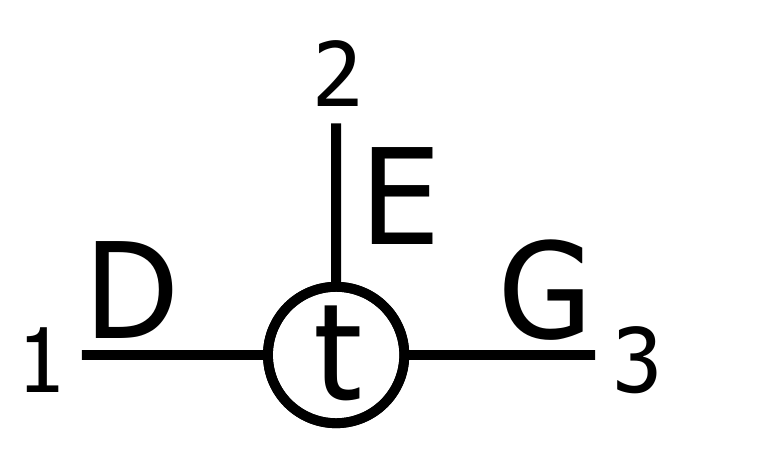}
\end{center}
\vspace{-7pt}
\begin{align*}
\cgtop &=\langle 1_10_20_3| + \langle 0_1\mathbf{1_2}0_3| + \langle 0_10_21_3| + \langle 1_1\mathbf{1_2}1_3|~,
\end{align*}
%% str := "01110001"
%% "<100| + <010| + <001| + <111|"
\end{minipage}
\begin{minipage}{.50\linewidth}
\begin{center}
% trim=l b r t
\includegraphics[scale=.22,clip=true,trim=75 635 300 80]{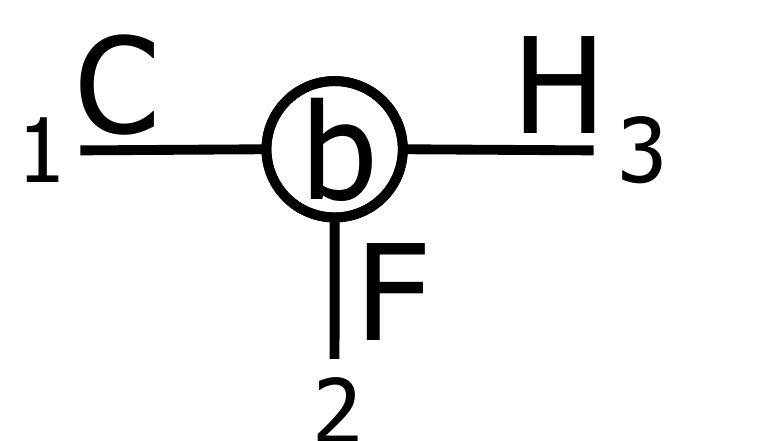}
\end{center}
\vspace{-7pt}
\begin{align*}
\cgbot &= \langle 0_1\mathbf{0_2}0_3| + \langle 0_11_20_3| + \langle 1_1\mathbf{0_2}1_3|~.
\end{align*}
%% str := "10100100"
%% "<000| + <010| + <101|
\end{minipage}
\end{theorem}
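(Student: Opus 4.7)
The plan is to produce an explicit Pfaffian certificate, in direct analogy with the certificate construction in the proof of Theorem \ref{thm_cnot}.2. I would first label the ten edges of the network $1,\ldots,10$ and assign each of them one of the unknown $2\times 2$ change-of-basis matrices $A,B,\ldots,J$ according to incidence. I would then invoke Theorem \ref{thm_pfaffian_gates_cob} (and its cogate analogue) four separate times: once for \textsc{cnot12}, once for \textsc{cnot1}, once for $\cgtop$, and once for $\cgbot$. Each application produces a system of parity, consistency, and inversion polynomial equations in the entries of the matrices assigned to its incident edges. Because the four gates/cogates share edges, these four systems share variables, and I would union them into a single polynomial ideal $\mathcal{I}$ whose variety parametrizes exactly the tuples $(A,B,\ldots,J)$ for which the entire configuration is simultaneously Pfaffian.

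Next, I would feed $\mathcal{I}$ to \textsc{Singular} and compute its Gr\"obner basis. Unlike Theorem \ref{thm_cnot_swap}, the present claim is existence, so the expected outcome is that the Gr\"obner basis is \emph{not} $\{1\}$ and the associated variety is nonempty. From the Gr\"obner basis I would extract a specific rational (or low-degree algebraic) point, giving concrete numerical values for $A,\ldots,J$. With these matrices in hand, I would compute, for each of the four tensors in the diagram, the tensor product against the corresponding change of basis and read off the entries of the resulting skew-symmetric matrices $\Xi_{\textsc{cnot12}},\Xi_{\textsc{cnot1}},\Theta_{\cgtop},\Theta_{\cgbot}$ together with the scaling constants $\alpha_1,\alpha_2,\beta_1,\beta_2\in\mathbb{C}$. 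This list of data is the certificate; verifying it reduces to checking four identities of the form $(A_{i_1}\otimes\cdots\otimes A_{i_k})\,T=\gamma\,\text{sPf}^{(*)}(M)$, each of which is a routine comparison of coefficients across the $2^k$ basis kets/bras.

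The main obstacle will be the sheer size of the Gr\"obner basis computation: the ideal has roughly forty unknowns (four entries per $2\times 2$ matrix, entries for the four skew-symmetric matrices, and inversion auxiliaries) with moderate-degree generators, and the nearby Theorem \ref{thm_cnot_swap} already required splitting a comparable problem into three subcomputations that together ran for many hours. I would follow the same decomposition strategy: first saturate the equations internal to \textsc{cnot12} together with its two adjacent cogate legs, then do the same for \textsc{cnot1}, and finally glue the two partial ideals along the shared bridge edges using a block elimination order. The key structural simplification to exploit is that each of the bridge cogates $\cgtop,\cgbot$ has a bit that is essentially forced (the highlighted $\mathbf{1_2}$ in $\cgtop$ and $\mathbf{0_2}$ in $\cgbot$), so their parity and consistency equations are significantly sparser than those of a generic 3-arity cogate; choosing an elimination order that uses this sparseness early should make the combined computation tractable and expose an explicit solution point, from which the certificate is read off and displayed as in Theorem \ref{thm_cnot}.2.
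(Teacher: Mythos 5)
Your proposal matches the paper's approach: the paper's proof is precisely the explicit Pfaffian certificate you describe extracting, namely concrete matrices $A,\ldots,J$ together with skew-symmetric matrices and scalars verifying that $(A \otimes B \otimes C \otimes D)\textsc{cnot12}$, $(G \otimes H \otimes I \otimes J)\textsc{cnot1}$, $(D \otimes E \otimes G)\cgtop$, and $(C \otimes F \otimes H)\cgbot$ are each Pfaffian, found (as the paper explains in Sec.~\ref{sec_alg_meth}) by solving the polynomial system of Theorem~\ref{thm_pfaffian_gates_cob} with \textsc{Singular}. Your additional remarks on splitting the Gr\"obner computation and exploiting the forced bits of the bridge cogates are sensible implementation details but do not change the method.
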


Observe that if the second wire of $\cgtop$ is fixed to state $\langle 1|$, and the second wire of $\cgbot$  is fixed to state $\langle 0|$, then both $\cgtop$ and $\cgbot$ act exactly like the \textsc{equal} $\cgequal$ cogate.

\begin{proof}
Consider matrices $A,\ldots,J \in \mathbb{C}^{2 \times 2}$ displayed  below:
\[
A = \left[\begin{array}{cc}
1 & 1\\[-1.5ex]
-\frac{1}{2} & \frac{1}{2}
\end{array}\right], \hspace{3pt}
%A1 := [1, 1, -1/2, 1/2];
B = \left[\begin{array}{cc}
0 & 1\\[-1.5ex]
-1 & 0
\end{array}\right], \hspace{3pt}
%A2 := [0, 1, -1, 0];
C = \left[\begin{array}{cc}
1 & 1\\[-1.5ex]
-\frac{1}{2} & \frac{1}{2}
\end{array}\right], \hspace{3pt}
%A3 := [1, 1, -1/2, 1/2];
D = \left[\begin{array}{cc}
0 & \frac{1}{2}\\[-1.5ex]
-2 & 0
\end{array}\right], \hspace{3pt}
%A4 := [0, 1/2, -2, 0];
E = \left[\begin{array}{cc}
-2 & 2\mathbf{i}\\[-1.5ex]
\frac{\mathbf{i}}{4} & -\frac{1}{4}
\end{array}\right],
%A5 := [-2, 2*I, I/4, -1/4];
\] \[
F = \left[\begin{array}{cc}
\frac{-1+\mathbf{i}}{2} & -\frac{1}{2}\\[-1.5ex]
1-\mathbf{i} & -\mathbf{i}
\end{array}\right], \hspace{3pt}
%A6 := [-1/2+I/2, -1/2, 1-I, -I];
G = \left[\begin{array}{cc}
-\frac{\mathbf{i}}{2} &  -\frac{1}{2}\\[-1.5ex]
1 & \mathbf{i}
\end{array}\right], \hspace{3pt}
%A7 := [-I/2, -1/2, 1, I];
H = \left[\begin{array}{cc}
1 & -\mathbf{i}\\[-1.5ex]
-\frac{\mathbf{i}}{2} & \frac{1}{2}
\end{array}\right], \hspace{3pt}
%A8 := [1, -I, -I/2, 1/2];
I = \left[\begin{array}{cc}
\mathbf{i} &0\\[-1.5ex]
0 & -\mathbf{i}
\end{array}\right], \hspace{3pt}
%A9 := [I, 0, 0, -I];
J = \left[\begin{array}{cc}
1 & -1\\[-1.5ex]
\frac{1}{2} & \frac{1}{2}
\end{array}\right]~.
%A10 := [1, -1, 1/2, 1/2];
\]
%A1 := [1, 1, -1/2, 1/2]; A2 := [0, 1, -1, 0];
%A3 := [1, 1, -1/2, 1/2]; A4 := [0, 1/2, -2, 0];
%A5 := [-2, 2*I, I/4, -1/4]; A6 := [-1/2+I/2, -1/2, 1-I, -I];
%A7 := [-I/2, -1/2, 1, I]; A8 := [1, -I, -I/2, 1/2];
%A9 := [I, 0, 0, -I]; A10 := [1, -1, 1/2, 1/2];
%/home/faculty/margulies/maple_code/Pfaffian/sig_files1/cnot_swap/ex12/find_sol
%check_pfaffian
%[sum32@cyberstar g0_cg3_het_files]$ pwd
%/gpfs/home/sum32/work/maple_code/Pfaffian/sig_files1/cnot_swap/ex12/g0_cg3_het_files/extra2/g0_cg3_het_files
%sol_cnot12_1t_b
We must show that gates $(A \otimes B \otimes C \otimes D)\textsc{cnot12}, (G \otimes H \otimes I \otimes J)\textsc{cnot1}$ and cogates $(D \otimes E \otimes G)\cgtop, (C \otimes F \otimes H)\cgbot$ are Pfaffian. Consider the following ``Pfaffian certificates":
\begin{align*}
(A \otimes B \otimes C \otimes D)\textsc{cnot12} &= |0000\rangle -\frac{1}{4}|1010\rangle + 4|0101\rangle + |1111\rangle~
= \text{sPf}\left(\left[ {\footnotesize\begin{array}{cccc}
 0   &   0 &   -\frac{1}{4}  & 0\\[-1.5ex]
0  &    0  &   0  &  4\\[-1.5ex]
\frac{1}{4} & 0   &   0  &   0\\[-1.5ex]
0  & -4 & 0   &   0   
%                      "|0000> + |1010> + |1101> + |0111>"
%             " + (1)|0000> + (-1/4)|1010> + (4)|0101> + (1)|1111>"
%Scaling factor:
%                                       1
%             " + (1)|0000> + (-1/4)|1010> + (4)|0101> + (1)|1111>"
%Checking all pfaffians... 
%SUCCESS!!!! Pfaffians match!
%                           [ 0      0    -1/4    0]
%                           [ 0      0     0      4]
%                           [1/4     0     0      0]
%                           [ 0     -4     0      0]
\end{array}}\right]\right)~,\\
%\end{align*} 
%\begin{align*}
(G \otimes H \otimes I \otimes J)\textsc{cnot1} &= |0000\rangle + |1100\rangle -2|1010\rangle + \mathbf{i}|1001\rangle  -\frac{1}{2}|0110\rangle\\
&\hspace{28pt} -\frac{\mathbf{i}}{4}|0101\rangle
+\frac{\mathbf{i}}{2}|0011\rangle - \frac{\mathbf{i}}{2}|1111\rangle~
= \text{sPf}\left(\left[ \begin{array}{cccc}
 0   &   1 &  -2 & \mathbf{i}\\[-1.5ex]
-1  &    0  &  - \frac{1}{2}  &   -\frac{\mathbf{i}}{4}\\[-1.5ex]
2 &  \frac{1}{2}   &   0  &   \frac{\mathbf{i}}{2}\\[-1.5ex]
-\mathbf{i}  & \frac{\mathbf{i}}{4} & -\frac{\mathbf{i}}{2}   &   0   
%Scaling factor:
%                                       1
%" + (1)|0000> + (1)|1100> + (-2)|1010> + (I)|1001> + (-1/2)|0110> + (-1/4*I\
%    )|0101> + (1/2*I)|0011> + (-1/2*I)|1111>"
%Checking all pfaffians... 
%SUCCESS!!!! Pfaffians match!
%                       [0       1        -2        I   ]
%                       [-1      0       -1/2     -1/4 I]
%                       [2      1/2       0       1/2 I ]
%                       [-I    1/4 I    -1/2 I      0   ]
\end{array}\right]\right)~,
\end{align*} \vspace{-17pt}
\begin{align*}
(D \otimes E \otimes G)\big(\langle 100| + \langle 010| + \langle 001| + \langle 111|\big) &=
 - \frac{1}{4}\langle 100| + 8\langle 010|\\
&\hspace{25pt}  - \frac{1}{2}\langle 001| + \langle 111|~
= \text{sPf}^{*}\left(\left[ \begin{array}{ccc}
 0  &  - \frac{1}{2} & 8\\[-1.5ex]
\frac{1}{2} & 0 & - \frac{1}{4}\\[-1.5ex]
-8  &  \frac{1}{4} & 0
%Scaling factor:
%                                       1
%             " + (-1/4)<100| + (8)<010| + (-1/2)<001| + (1)<111|"
%Checking all pfaffians... 
%SUCCESS!!!! Pfaffians match!
%                             [ 0     -1/2     8  ]
%                             [1/2     0      -1/4]
%                             [-8     1/4      0  ]
\end{array} \right] \right)~,
\end{align*}
\begin{align*}
(C \otimes F \otimes H)\big(\langle 000| + \langle 010| + \langle 101| \big)&= \langle 100| +  \frac{\mathbf{i}}{4}\langle 010| - \mathbf{i}\langle 001| + \langle 111|~
= \text{sPf}^{*}\left(\left[ {\small \begin{array}{ccc}
 0  &  -\mathbf{i} & \frac{\mathbf{i}}{4}\\[-1.5ex]
\mathbf{i}  &  0 & 1\\[-1.5ex]
-\frac{\mathbf{i}}{4} &-1 & 0
%Scaling factor:
%                                       1
%              " + (1)<100| + (1/4*I)<010| + (-I)<001| + (1)<111|"
%Checking all pfaffians... 
%SUCCESS!!!! Pfaffians match!
%                            [  0       -I    1/4 I]
%                            [  I       0       1  ]
%                            [-1/4 I    -1      0  ]
\end{array}} \right] \right)~.
\end{align*} \hfill
\vspace{-5pt}
\end{proof}

We observe that the construction outlined in Theorem \ref{thm_cnot12_br} is a model of \textsc{swap} \emph{only} if the dangling edges from cogates $\cgtop$ and $\cgbot$ can be fixed to $\langle 1|$ and $\langle 0 |$, respectively. However, this partial result reduces the question of modeling \textsc{swap} \textbf{from} the more complicated question of mimicking a wire crossing as a bipartite, Pfaffian tensor contraction network fragment \textbf{to} the simplified question of assigning a constant state to a given wire. As example of the myriad of ways of fixing a constant state to a given wire, we present the following example.

\begin{example} \label{ex_const} Consider the following tensor contraction network fragment:

\begin{minipage}{.20\linewidth}
\begin{center}
% trim=l b r t
\includegraphics[scale=.22,page=1,clip=true,trim=80 430 250 75]{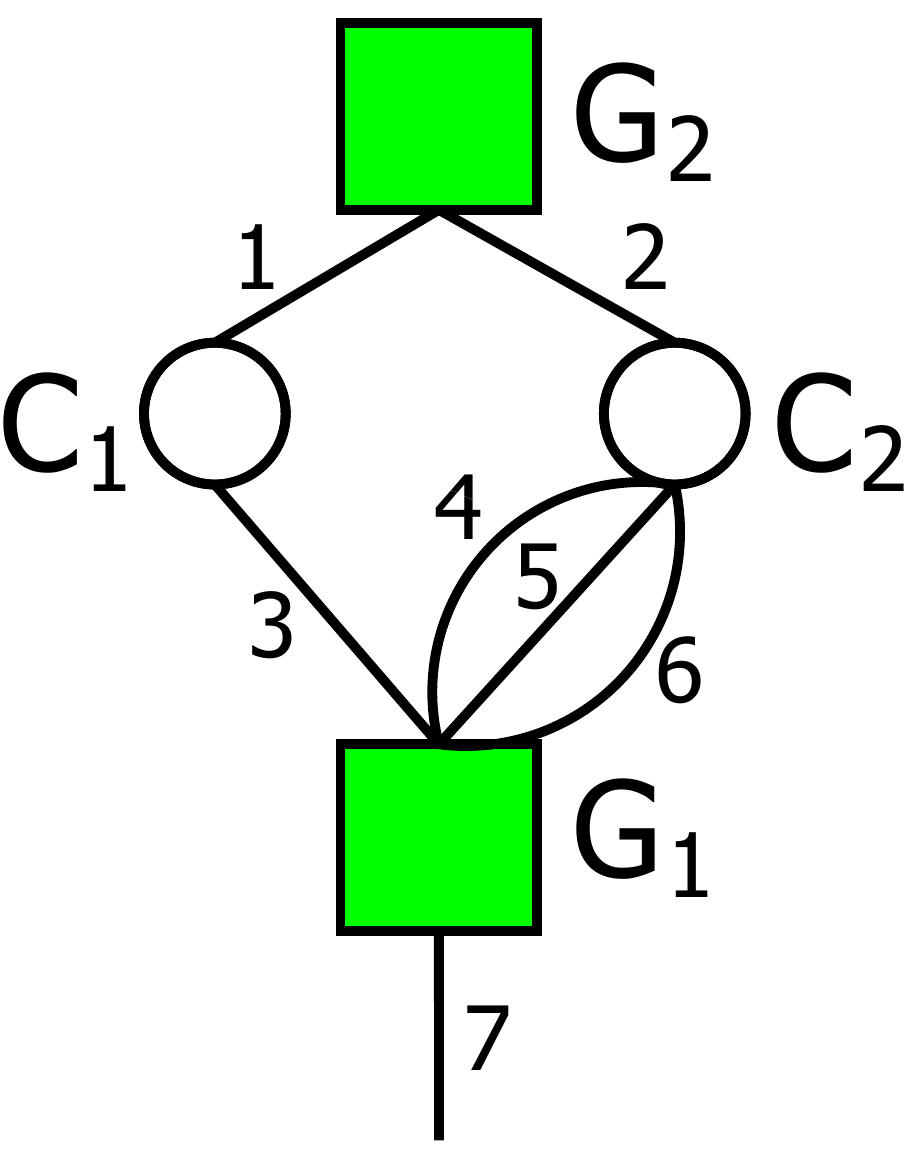}
\end{center}
\end{minipage}
\begin{minipage}{.15\linewidth}
where
\end{minipage}
\begin{minipage}{.55\linewidth}
\vspace{-13pt}
\begin{align*}
G_1 &= |1_71_30_41_51_6 \rangle  + |0_70_31_41_51_6 \rangle + |0_71_30_41_51_6 \rangle~,\\
G_2 &= |0_10_2\rangle + |1_11_2\rangle~,\\
C_1 &= \langle 0_11_3| + \langle 1_10_3|~,\\
C_2 &= \langle 0_2 0_41_51_6| + \langle 1_21_40_51_6| + \langle 1_2 1_41_50_6|~.
\end{align*}
\end{minipage}

Observe that $\langle C_2|G_1\rangle = \langle 0_2|1_71_3 \rangle~,
\big\langle \langle 0_2|1_71_3 \rangle  \big| G_2 \big \rangle = |0_11_71_3 \rangle~,$ and $C_1 |0_11_71_3 \rangle = 1_7~.$ Therefore, this tensor contraction network fragment fixes edge 7 in state $|1\rangle$. There are infinitely many such constructions, when considering gates/cogates of unbounded arity. However, the primary question is whether there exists a construction that fixes edge 7 in state $|1\rangle$ such that all gates/cogates are simultaneously Pfaffian. This example (which is not Pfaffian) highlights the computational difficulty of finding such a construction, or proving that such a construction does not exist. \hfill $\Box$
\end{example}

It is very important to observe that there is no contradiction between widely held complexity class assumptions (such as $\#\text{P} \neq \text{P}$), and the definitive existence of a Pfaffian \textsc{swap}. Even if a Pfaffian \textsc{swap} could be explicitly constructed, the question of which particular gates/cogates were simultaneously Pfaffian (and therefore, precisely which non-planar \#CSP problems were then solvable in polynomial-time) would still remain an open question. For example, if the only gates/cogates compatible with the Pfaffian \textsc{swap} were tensors modeling non-planar, \textbf{non}-\#P-complete \#CSP problems, there would be no collapse of \#P into P. 

%	However, we observe that although there is no obvious reason why a Pfaffian \textsc{swap} tensor cannot exist, a concrete example has been extremely difficult to find. We have conducted many, \emph{many} experiments on various combinations of gates and cogates in the quest for Pfaffian tensor contraction network fragment that mimics \textsc{swap}, and every experiment has failed thus far. This abundance of computational evidence suggests that perhaps it is not possible to construct a Pfaffian \textsc{swap}. But a definitive proof of its non-existence remains as elusive as a constructive example.

	To conclude, we consider the problem of modeling \textsc{swap} as a Pfaffian tensor contraction network fragment under a heterogeneous change of basis to be one of the most interesting open problems in this area. Not only does overcoming the barrier of planarity for a certain class of problems have wide-spread appeal, but constructing a Pfaffian \textsc{swap} would also provide an answer to another, equally interesting mathematical question: given an arbitrary tensor that is \emph{not} Pfaffian, is it possible to design an equivalent tensor with gates/cogates that are simultaneously Pfaffian under some heterogeneous change of basis?

%and hence another method for generating systems of polynomial equations which we expect for complexity-theoretic reasons to be unsolvable but for which it is difficult to demonstrate directly that no solutions exist.

%%%%%%%%%%%%%%%%%%%%%%%%%%%%%%%%%%%%%%%%%%%%%%%%%%
\section{Decomposable Gates/Cogates}  \label{sec_decomp} In the previous section, we demonstrated that the \textsc{swap} gate was not Pfaffian, and also explored various directions for designing a Pfaffian tensor contraction network fragment with input/output behavior equivalent to \textsc{swap}. In Sec. \ref{sec_bool_ten}, we designed a Pfaffian tensor contraction network fragment called a \emph{Boolean tree}, which has behavior equivalent to the Pfaffian $n$-arity \textsc{equal} gate/cogate. The question of whether the behavior of a larger arity gate/cogate can be modeled by a construction of lower arity gates/cogates, or whether a non-Pfaffian gate/cogate can be modeled by a tensor contraction network fragment \emph{at all} is a question of both computational and theoretical interest. In this section, we formalize the notion of a \emph{decomposable gate} (or cogate), and provide an explicit example.

\begin{defi} \label{def_decomp_gcg} Let $G$ be an $n$-arity Pfaffian gate such that $(A_1\otimes \cdots \otimes A_{n})G = \alpha_G\hspace{1pt}\text{\emph{sPf}}(\Xi_G)$. Then $G$ is \emph{decomposable} if there exists a planar, bipartite, Pfaffian tensor contraction network fragment with spanning tree edge order  $\sigma$ constructed from gates $G_1,\ldots,G_i$ and cogates $C_1,\ldots, C_j$ (with $\Xi_{\sigma} =  \Xi_{1} \oplus_{\sigma} \cdots \oplus_{\sigma} \Xi_{i}, \alpha_{\sigma} = \alpha_{1}\cdots \alpha_{i}, \Theta_{\sigma} =  \Theta_{1} \oplus_{\sigma} \cdots \oplus_{\sigma} \Theta_{j}$ and $\beta_{\sigma} = \beta_{1}\cdots \beta_{j}$) such that
\begin{align*}
(A_1\otimes \cdots \otimes A_{n})G = \alpha_G\hspace{1pt}\text{\emph{sPf}}(\Xi_G) = \Big\langle \beta_{\sigma}\hspace{1pt} \text{\emph{sPf}}^{*}(\widetilde{\Theta}_{\sigma})~|~ \alpha_{\sigma}\hspace{1pt}\text{\emph{sPf}} (\widetilde{\Xi}_{\sigma} ) \Big\rangle~,
\end{align*}
where $\widetilde{\Theta},\widetilde{\Xi}$ are defined in Theorem \ref{thm_pfaff_eval}.
\end{defi}

We comment that we do not force the gates $G_1,\ldots, G_i$ and cogates $C_1,\ldots, C_j$ to be lower-arity then gate $G$. Observe that the integer 210 can be equivalently expressed as $7\cdot 5 \cdot 3 \cdot 2$ or $35 \cdot 6$ or $7770/37$ or, of course, $210\cdot 1$. These expressions of the integer 210 are obviously of varying degrees of interest, depending on the circumstances. However, since it is known that every 3-arity gate/cogate is Pfaffian under some heterogeneous change of basis (\cite{morton_pfaff}), then 5-arity gates/cogates are the first odd-arity non-trivial case. Therefore, we do not wish to exclude the possibility of ``decomposing" a 4-arity gate (such as \textsc{swap}) into a construction that includes 5-arity gates/cogates. Previously in Ex. \ref{ex_const}, we highlighted how a 5-arity gate could be used in constructing a Pfaffian \textsc{swap}. We will now demonstrate a concrete example of a decomposable gate.

%\begin{example} \label{ex_decomp_not_pfaff} We claim
% example did not work
%CNOT1
%str := "1000000010001100";
%% str := "01110001"
%% "<100| + <010| + <001| + <111|"
%\begin{align*}
%\text{\textsc{cnot1}} &= |0_10_2,0_30_4\rangle + |0_11_2,1_30_4\rangle\\
%&\phantom{=}\hspace{9pt} + |1_10_2,1_31_4\rangle + |1_11_2,0_314\rangle~.
%\end{align*}
%\end{minipage}
%% /gpfs/home/sum32/work/maple_code/Pfaffian/sig_files1/cnot_swap/ex13
%% XXX 3022167.cyberstar          ...11000.sig.job sum32           42:32:43 R cyberstar
%% out_g5_cg0_het_eval_01100000000000110001001000011000.sig
%% /home/faculty/margulies/maple_code/Pfaffian/sig_files1/g5_cg0_het
%\end{example}

\begin{example} \label{ex_decomp} Let
\[
%A := [I*2^(3/4)/2, I*2^(3/4)/2, I*2^(1/4)/2, -I*2^(1/4)/2];
A = \left[\begin{array}{cc}
\mathbf{i}\big(\frac{2^{3/4}}{2}\big) & \mathbf{i}\big(\frac{2^{3/4}}{2}\big)\\
\mathbf{i}\big(\frac{2^{1/4}}{2}\big) & -\mathbf{i}\big(\frac{2^{1/4}}{2}\big)
\end{array}\right]~, \quad
%B := [-(2^(3/4)/2), (2^(3/4)/2), -(2^(1/4)/2), -(2^(1/4)/2)];
B = \left[\begin{array}{cc}
-\big(\frac{2^{3/4}}{2}\big) & \frac{2^{3/4}}{2}\\
-\big(\frac{2^{1/4}}{2}\big) & -\big(\frac{2^{1/4}}{2}\big)
\end{array}\right]~, \quad
%C := [ -(2^(3/4)/2), -I*(2^(3/4)/2), -I*(2^(1/4)/2), -(2^(1/4)/2)];
C = \left[\begin{array}{cc}
-\big(\frac{2^{3/4}}{2}\big)  & -\mathbf{i}\big(\frac{2^{3/4}}{2}\big) \\
-\mathbf{i}\big(\frac{2^{1/4}}{2}\big) & -\big(\frac{2^{1/4}}{2}\big) 
\end{array}\right]~,
\]
\[
%Dd := [ (2^(3/4)/2), -I*(2^(3/4)/2), -I*(2^(1/4)/2), (2^(1/4)/2)];
D = \left[\begin{array}{cc}
\frac{2^{3/4}}{2} & -\mathbf{i}\big(\frac{2^{3/4}}{2}\big)\\
-\mathbf{i}\big(\frac{2^{1/4}}{2}\big)  & \frac{2^{1/4}}{2}
\end{array}\right]~, \quad
%E := [ (2^(3/4)/2), -I*(2^(3/4)/2), -I*(2^(1/4)/2), (2^(1/4)/2)];
E = \left[\begin{array}{cc}
\frac{2^{3/4}}{2}  & -\mathbf{i}\big(\frac{2^{3/4}}{2}\big)\\
-\mathbf{i}\big(\frac{2^{1/4}}{2}\big)& \frac{2^{1/4}}{2}
\end{array}\right]~, \quad
%F := [ -(2^(3/4)/2), -I*(2^(3/4)/2), -I*(2^(1/4)/2), -(2^(1/4)/2)];
F = \left[\begin{array}{cc}
-\big(\frac{2^{3/4}}{2}\big) & -\mathbf{i}\big(\frac{2^{3/4}}{2}\big)\\
-\mathbf{i}\big(\frac{2^{1/4}}{2}\big) & -\big(\frac{2^{1/4}}{2}\big)
\end{array}\right]~.
\]
%A := [I*2^(3/4)/2, I*2^(3/4)/2, I*2^(1/4)/2, -I*2^(1/4)/2];
%B := [-(2^(3/4)/2), (2^(3/4)/2), -(2^(1/4)/2), -(2^(1/4)/2)];
%C := [ -(2^(3/4)/2), -I*(2^(3/4)/2), -I*(2^(1/4)/2), -(2^(1/4)/2)];
%Dd := [ (2^(3/4)/2), -I*(2^(3/4)/2), -I*(2^(1/4)/2), (2^(1/4)/2)];
%E := [ (2^(3/4)/2), -I*(2^(3/4)/2), -I*(2^(1/4)/2), (2^(1/4)/2)];
%F := [ -(2^(3/4)/2), -I*(2^(3/4)/2), -I*(2^(1/4)/2), -(2^(1/4)/2)];
We claim
\begin{minipage}{.25\linewidth}
\begin{center}
% trim=l b r t
\includegraphics[scale=.22,page=1,clip=true,trim=0 550 350 120]{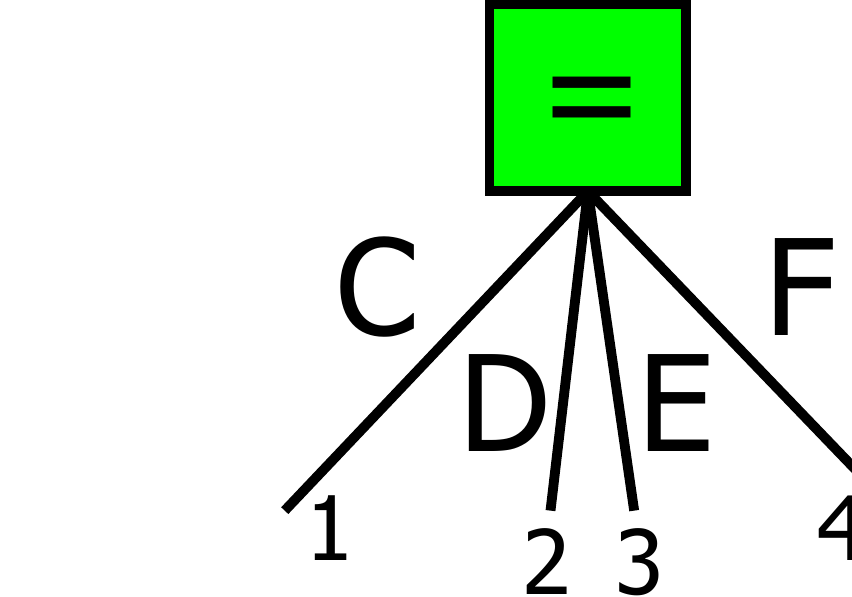}
\end{center}
\end{minipage}
\begin{minipage}{.25\linewidth}
\vspace{-10pt}
\begin{center}
is decomposable into 
\end{center}
\end{minipage}
\begin{minipage}{.43\linewidth}
\begin{center}
% trim=l b r t
\includegraphics[scale=.22,page=1,clip=true,trim=400 510 75 75]{bool_tree_het_decomp_lbl_1234.pdf}
\hspace{-6.5pt}\includegraphics[scale=.22,page=2,clip=true,trim=75 510 0 80]{bool_tree_het_decomp_lbl_1234.pdf}
\end{center}
\end{minipage}

\noindent In order to prove this claim, let $\Xi_{=}$ be such that $(A\otimes B \otimes B)\big(|000\rangle + |111\rangle\big) = \alpha\hspace{1pt}\text{\emph{sPf}}(\Xi_{=})$ and $\Theta_{=}$ be such that $(A^{-1}\otimes A^{-1})\big(\langle 00| + \langle 11|\big) = \beta \hspace{1pt} \text{\emph{sPf}}^{*}(\Theta_{=})$. We must 1) choose a spanning tree order $\sigma$ of the 4-arity Boolean tree fragment (above right), and 2) construct the matrices $\widetilde{\Xi} = \Xi_{=} \oplus_{\sigma} \Xi_{=}$ and $\widetilde{\Xi}_{\sigma}$ (note that $\widetilde{\Theta}, \widetilde{\Xi}$ are defined in Theorem \ref{thm_pfaff_eval} and note that $\widetilde{\Theta} = \Theta_{=})$, and 3) demonstrate
\begin{align*}
 (C\otimes D \otimes E \otimes F) (|0000\rangle + |1111\rangle) &= \beta \alpha^2 \big \langle   \hspace{1pt} \text{\emph{sPf}}^{*}(\widetilde{\Theta}_{=}) | \text{\emph{sPf}} \big(\widetilde{\Xi}_{\sigma}\big) \big \rangle ~.
\end{align*}
We will demonstrate each of these steps below. First, we choose the following arbitrary spanning tree order $\sigma = \{5,2,1,4,3,6\}$. Observe that the 2-arity \textsc{equal} cogate is in the same order as $\sigma$ ($\{5,6\}$), but both 3-arity \textsc{equal} gates are in the opposite order (for example, the left gate is in counter-clockwise order $\{5,1,2\}$, but the order appears in $\sigma$ as $\{5,2,1\}$):
\begin{center}
% trim=l b r t
\includegraphics[scale=.22,clip=true,trim=81 450 75 82]{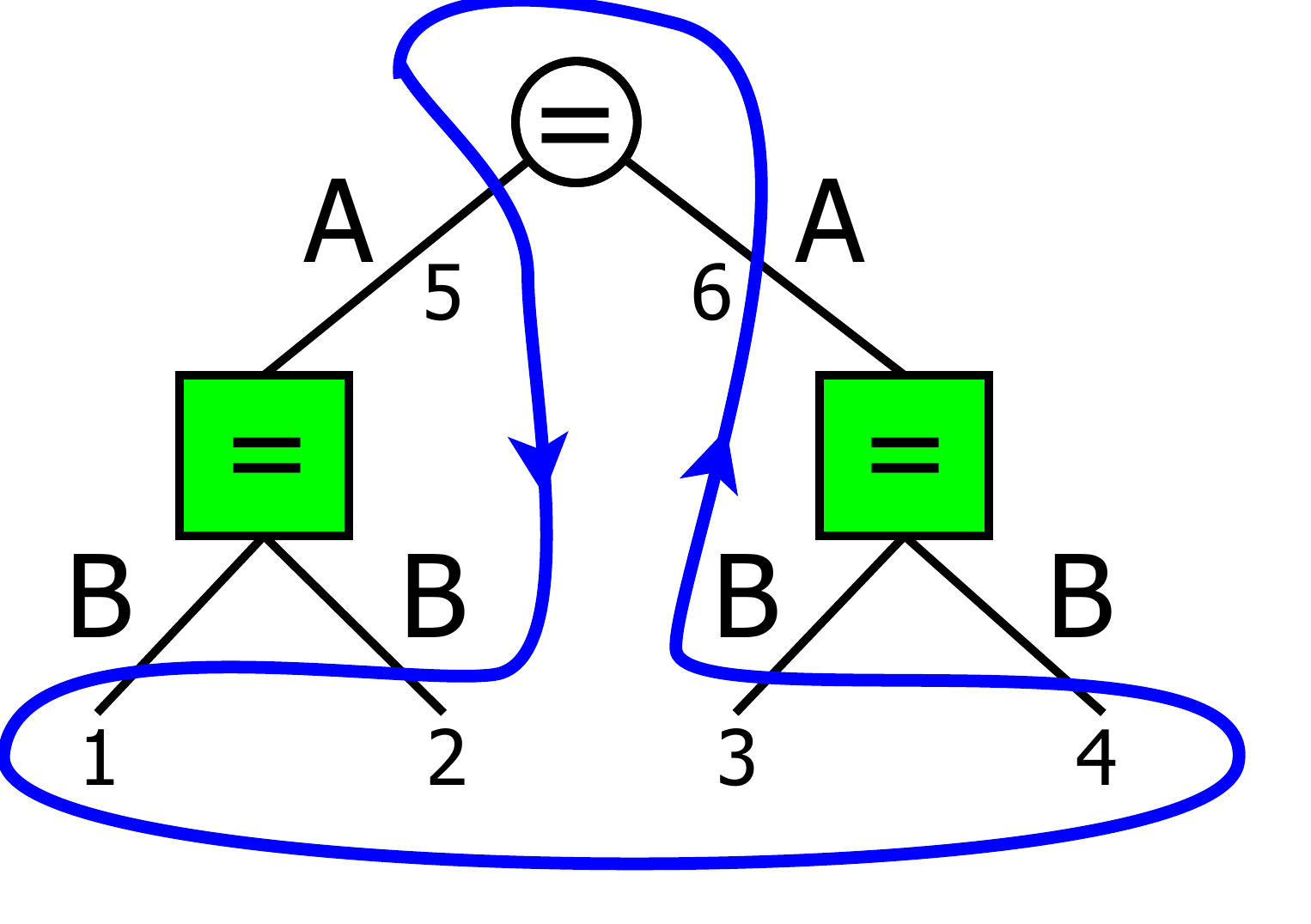}
\end{center}
Next, we present the following \emph{labeled} ``Pfaffian certificates".

\vspace{-15pt}
\begin{minipage}{.75\linewidth}
%\vspace{-22pt}
\begin{align*}
\hspace{44pt}\big(A^{-1} \otimes A^{-1}\big)\big(\langle 0_50_6| + \langle 1_51_6|\big) &= \underbrace{-\big(\sqrt{2}\big)}_{\beta}\hspace{1pt}\text{\emph{sPf}}^*
\overbrace{\left(
\bordermatrix{
&5&6\cr
5& 0 & \frac{1}{2} \cr
6& - \frac{1}{2}  & 0\
}\right)}^{\Theta_{=}}~,
\end{align*}
\end{minipage}
\begin{minipage}{.15\linewidth}
\begin{center}
% trim=l b r t 
%\vspace{-25pt}
\includegraphics[scale=.25,trim=80 635 375 85]{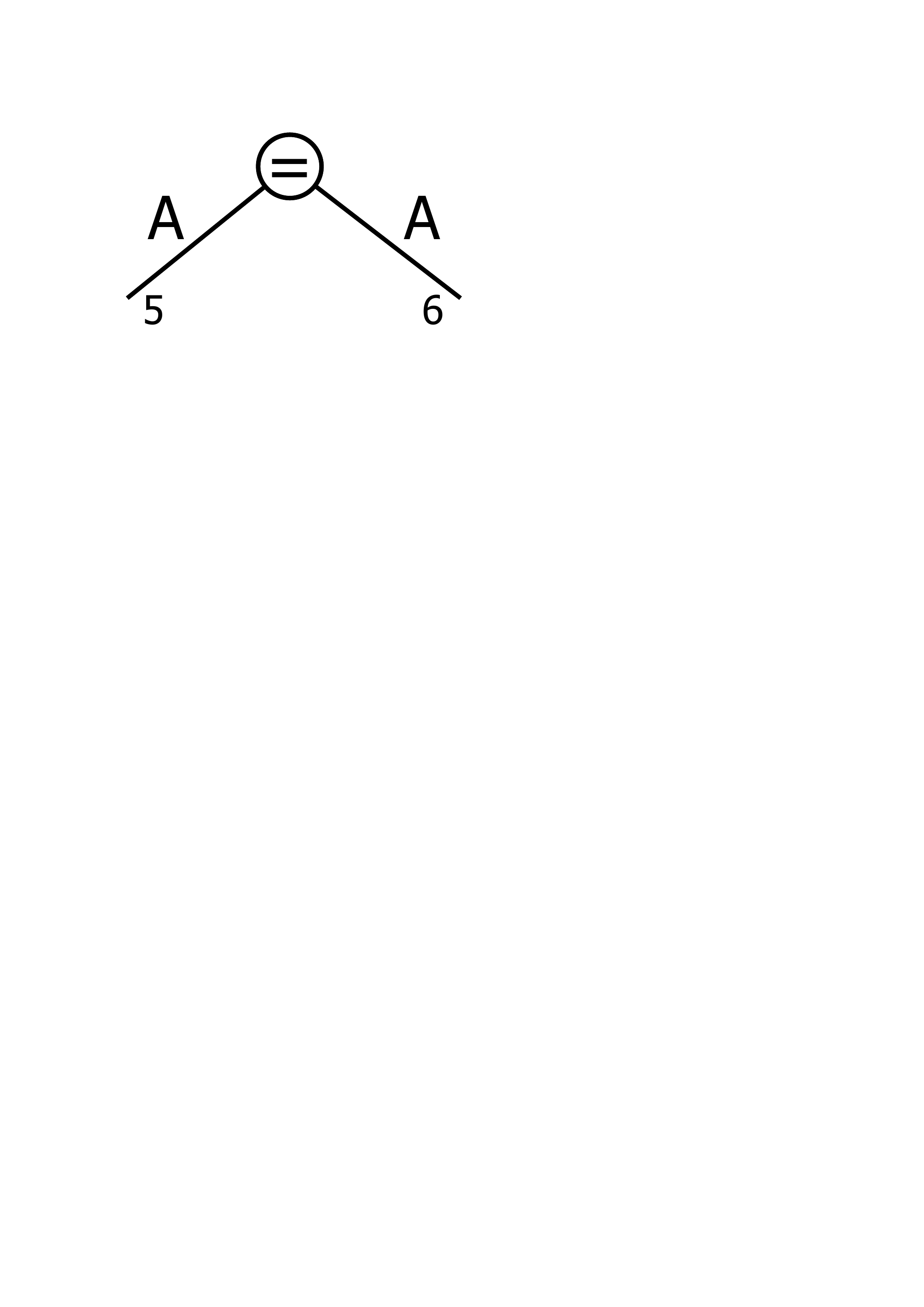}
%\vspace{40pt}
\end{center}
\end{minipage}

\vspace{-10pt}
\begin{minipage}{.75\linewidth}
%\vspace{-22pt}
\begin{align*}
\hspace{26pt}(A \otimes B \otimes B)\big(|0_50_10_2\rangle + |1_51_11_2\rangle\big) &= \underbrace{\mathbf{i}\big(2^{1/4}\big)}_{\alpha}\hspace{1pt}\text{\emph{sPf}}
\overbrace{\left(
\bordermatrix{
&5&1&2\cr
5& 0 & \frac{1}{2}  & \frac{1}{2} \cr
1& - \frac{1}{2}  & 0 &  \frac{1}{2} \cr
2& - \frac{1}{2}   & - \frac{1}{2}   & 0
}\right)}^{\Xi_{=}}~,
\end{align*}
\end{minipage}
\begin{minipage}{.15\linewidth}
\begin{center}
% trim=l b r t 
%\vspace{-25pt}
\includegraphics[scale=.25,trim=80 570 340 30]{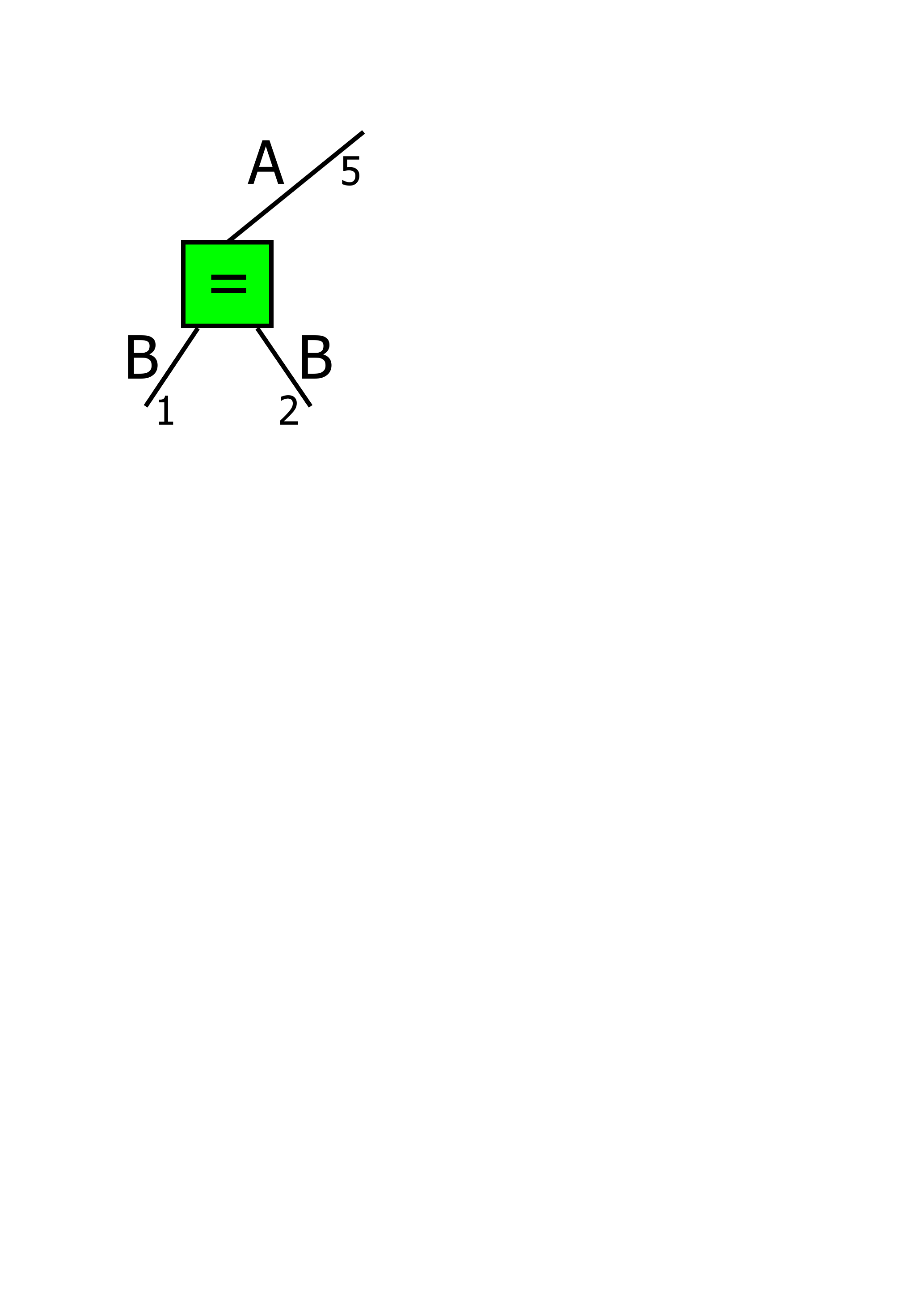}
%\vspace{40pt}
\end{center}
\end{minipage}

\begin{minipage}{.75\linewidth}
%\vspace{-22pt}
\begin{align*}
\hspace{-13pt}\big(C \otimes D \otimes E \otimes F\big)\big(|0_10_20_30_4\rangle + |1_11_21_31_4\rangle\big) &= \text{\emph{sPf}} \left(
\bordermatrix{
&1&2&3&4\cr
1 & 0 &\frac{1}{2} & \frac{1}{2} & -\frac{1}{2}\cr
2 & -\frac{1}{2} & 0 & -\frac{1}{2} & \frac{1}{2}\cr
3 & -\frac{1}{2} & \frac{1}{2} & 0 & \frac{1}{2}\cr
4 &\frac{1}{2} & -\frac{1}{2} & -\frac{1}{2} & 0
}
%                         [ 0      1/2     1/2     -1/2]
%                         [                            ]
%                         [-1/2     0      -1/2    1/2 ]
%                         [                            ]
%                         [-1/2    1/2      0      1/2 ]
%                         [                            ]
%                         [1/2     -1/2    -1/2     0  ]
\right)~.
\end{align*}
\end{minipage}
\begin{minipage}{.15\linewidth}
\begin{center}
% trim=l b r t 
%\vspace{-25pt}
\hspace{-20pt}\includegraphics[scale=.25,page=1,clip=true,trim=85 550 350 120]{bool_tree_het_decomp_lbl_1234.pdf}
%\vspace{40pt}
\end{center}
\end{minipage}

We will now construct $\Xi_{\sigma} = \Xi_{=} \oplus_{\sigma} \Xi_{=}$ and $\widetilde{\Xi}_{\sigma}$. Recall that $\sigma = \{5,2,1,6,4,3\}$, but the two gates are in orders $\{5,1,2\}$ and $\{6,3,4\}$, respectively. Therefore, the wires of the gates appear in opposite order in $\sigma$, and thus, when the matrix $\widetilde{\Xi}_{\sigma}$ is calculated (via the definition given in Theorem \ref{thm_pfaff_eval}), the entries in both $\Xi_{=}$ blocks are flipped according to the $(-1)^{i+j+1}\xi_{ij}$ ``checkerboard" pattern:
{\footnotesize{
\[
\Xi_{\sigma} = 
\bordermatrix{
&5&1&2&6&3&4\cr
5& 0 & \frac{1}{2} & \frac{1}{2} & 0 & 0 & 0\cr
1& -\frac{1}{2}& 0 & \frac{1}{2} & 0 & 0 &0\cr
2& -\frac{1}{2} & -\frac{1}{2} & 0 & 0 & 0 & 0\cr
6& 0 & 0 & 0 & 0 & \frac{1}{2} & \frac{1}{2}\cr
3& 0 & 0 & 0 & -\frac{1}{2} & 0 & \frac{1}{2}\cr
4& 0 & 0 & 0 & -\frac{1}{2} & -\frac{1}{2} & 0
}~, \quad \widetilde{\Xi}_{\sigma} = 
\bordermatrix{
&5&1&2&6&3&4\cr
5& 0 & \frac{1}{2} & -\frac{1}{2} & 0 & 0 & 0\cr
1& -\frac{1}{2}& 0 & \frac{1}{2} & 0 & 0 &0\cr
2& \frac{1}{2} & -\frac{1}{2} & 0 & 0 & 0 & 0\cr
6& 0 & 0 & 0 & 0 & \frac{1}{2} & -\frac{1}{2}\cr
3& 0 & 0 & 0 & -\frac{1}{2} & 0 & \frac{1}{2}\cr
4& 0 & 0 & 0 & \frac{1}{2} & -\frac{1}{2} & 0
}~,
\]}} 
Now we compute $\big \langle   \hspace{1pt} \text{\emph{sPf}}^{*}(\widetilde{\Theta}_{=}) | \text{\emph{sPf}} \big(\widetilde{\Xi}_{\sigma}\big) \big \rangle$. For notational convenience, we display the wire labels of the first and last bra/kets only. Observe that
{\footnotesize{\begin{align*}
 \text{sPf} \big(\widetilde{\Xi}_{\sigma}\big) &= 
|0_50_10_20_60_30_4\rangle + \frac{1}{2}|110000\rangle -\frac{1}{2}|101000\rangle + \frac{1}{2}|011000\rangle + \frac{1}{2}|000110\rangle -\frac{1}{2}|000101\rangle\\
&\phantom{=}\hspace{9pt} + \frac{1}{2}|000011\rangle + \frac{1}{4}|110110\rangle -\frac{1}{4}|110101\rangle +\frac{1}{4}|110011\rangle-\frac{1}{4}|101110\rangle + \frac{1}{4}|101101\rangle \\
&\phantom{=}\hspace{9pt} -\frac{1}{4}|101011\rangle +\frac{1}{4}|011110\rangle -\frac{1}{4}|011101\rangle + \frac{1}{4}|0_51_11_20_61_31_4\rangle~,
%                   [ 0      1/2     -1/2     0       0       0  ]
%                   [                                            ]
%                   [-1/2     0      1/2      0       0       0  ]
%                   [                                            ]
%                   [1/2     -1/2     0       0       0       0  ]
%              A := [                                            ]
%                   [ 0       0       0       0      1/2     -1/2]
%                   [                                            ]
%                   [ 0       0       0      -1/2     0      1/2 ]
%                   [                                            ]
%                   [ 0       0       0      1/2     -1/2     0  ]
%
%"(1)|000000> + (1/2)|110000> + (-1/2)|101000> + (1/2)|011000> + (1/2)|000110\
%    > + (-1/2)|000101> + (1/2)|000011> + (1/4)|110110> + (-1/4)|110101> + (1\
%    /4)|110011> + (-1/4)|101110> + (1/4)|101101> + (-1/4)|101011> + (1/4)|01\
%    1110> + (-1/4)|011101> + (1/4)|011011>"
\end{align*}}}
\vspace{-20pt}
\begin{align*}
\text{sPf}^{*}(\widetilde{\Theta}_{=}) &=\frac{1}{2} \langle 0_50_6| + \langle 1_51_6|~.
%                                 "<00| + <11|"
%                    " + (-1/2*2^(1/2))<00| + (-2^(1/2))<11|"
%Scaling factor:
%                                       1/2
%                                     -2
%                            " + (1/2)<00| + (1)<11|"
\end{align*}
Finally, 
\begin{align*}
 \big \langle   \hspace{1pt} \text{sPf}^{*}(\widetilde{\Theta}_{=})~|~\text{sPf} \big(\widetilde{\Xi}_{\sigma}\big) \big \rangle &= \frac{1}{2} \bigg( |0_10_20_30_4\rangle + \frac{1}{2}|1100\rangle + \frac{1}{2}|0011\rangle + \frac{1}{4}|1111\rangle\\
&\phantom{=}\hspace{12pt}  +\frac{1}{2}|1010\rangle - \frac{1}{2}|1001\rangle - \frac{1}{2}|0110\rangle +\frac{1}{2}|0_11_20_31_4\rangle\bigg)~,\\
% ( (1/2)<00| + (1)<11|) (
%"(1)|000000> + (1/2)|110000> + (-1/2)|101000> + (1/2)|011000> + (1/2)|000110\
%    > + (-1/2)|000101> + (1/2)|000011> + (1/4)|110110> + (-1/4)|110101> + (1\
%    /4)|110011> + (-1/4)|101110> + (1/4)|101101> + (-1/4)|101011> + (1/4)|01\
%    1110> + (-1/4)|011101> + (1/4)|011011>"
% = 
% (1/2)<00| ( 56 compressed with 56 (0_500 0_600)
%"(1)|000000> + (1/2)|011000> + (1/2)|000011> + (1/4)|011011>"
% (1)<11| (
%"+ (1/4)|110110> + (-1/4)|110101> + (-1/4)|101110> + (1/4)|101101>"
%
% = ( 1/2 ) ("|0000> + (1/2)|1100> + (1/2)|0011> + (1/4)|1111>
% (1)<11| ( ... )
%1/2 ( + (1/2)|1010> + (-1/2)|1001> + (-1/2)|0110> + (1/2)|0101>
\beta \alpha^2  \big \langle   \hspace{1pt} \text{sPf}^{*}(\widetilde{\Theta}_{=})~|~\text{sPf} \big(\widetilde{\Xi}_{\sigma}\big) \big \rangle &= -\big(\sqrt{2}\big) \Big(\mathbf{i}\big(2^{1/4}\big)\Big)^2 \Big(\frac{1}{2}(\cdots)\Big) = \text{sPf} \left(
\bordermatrix{
&1&2&3&4\cr
1 & 0 &\frac{1}{2} & \frac{1}{2} & -\frac{1}{2}\cr
2 & -\frac{1}{2} & 0 & -\frac{1}{2} & \frac{1}{2}\cr
3 & -\frac{1}{2} & \frac{1}{2} & 0 & \frac{1}{2}\cr
4 &\frac{1}{2} & -\frac{1}{2} & -\frac{1}{2} & 0
}
%                         [ 0      1/2     1/2     -1/2]
%                         [                            ]
%                         [-1/2     0      -1/2    1/2 ]
%                         [                            ]
%                         [-1/2    1/2      0      1/2 ]
%                         [                            ]
%                         [1/2     -1/2    -1/2     0  ]
\right)~.\\
%&=  |0_10_20_30_4\rangle + \frac{1}{2}|1100\rangle + \frac{1}{2}|0011\rangle + \frac{1}{4}|1111\rangle\\
%&\phantom{=}\hspace{12pt}  +\frac{1}{2}|1010\rangle - \frac{1}{2}|1001\rangle - \frac{1}{2}|0110\rangle +\frac{1}{2}|0_11_20_31_4\rangle~,\\
&= \big(C \otimes D \otimes E \otimes F\big)\big(|0_10_20_30_4\rangle + |1_11_21_31_4\rangle\big)~. 
%" + (1)|0000> + (1/2)|1100> + (1/2)|1010> + (-1/2)|1001> + (-1/2)|0110> + (1\
%    /2)|0101> + (1/2)|0011> + (1/4)|1111>"
\end{align*}
 \hfill $\Box$
\vspace{-10pt}
%
%A := [I*2^(3/4)/2, I*2^(3/4)/2, I*2^(1/4)/2, -I*2^(1/4)/2];
%B := [-(2^(3/4)/2), (2^(3/4)/2), -(2^(1/4)/2), -(2^(1/4)/2)];
%C := [ -(2^(3/4)/2), -I*(2^(3/4)/2), -I*(2^(1/4)/2), -(2^(1/4)/2)];
%Dd := [ (2^(3/4)/2), -I*(2^(3/4)/2), -I*(2^(1/4)/2), (2^(1/4)/2)];
%E := [ (2^(3/4)/2), -I*(2^(3/4)/2), -I*(2^(1/4)/2), (2^(1/4)/2)];
%F := [ -(2^(3/4)/2), -I*(2^(3/4)/2), -I*(2^(1/4)/2), -(2^(1/4)/2)];
%
% /gpfs/home/sum32/work/maple_code/Pfaffian/sig_files1/bool_ten/tree_bi
% Singular bi_tree_g4_decomp_d1_A.sig > out_bi_tree_g4_decomp_d1_A.sig
% 
% vi check_g4_decomp_A.mpl check_pfaffian bi_tree_g4_decomp_d1_A.mpl
%-bash-4.1$ maple check_pfaffian > out_g4_decomp_A
%/home/faculty/margulies/maple_code/Pfaffian/sig_files1/gi_code/bool_ten
%
\end{example}

Ex. \ref{ex_decomp} above demonstrates the \emph{existence} of decomposable gates. This is important, since there are gates with arity as low as 4 where the Pfaffian property has not been determined. However, pairing gates with cogates often improves the computation time, since the lower degree ideals can act as ``cuts". For example, the following 4-arity gate:
\begin{align*}
|0010\rangle + |1100\rangle + |1001\rangle + |0110\rangle + |0101\rangle + |1110\rangle + |1011\rangle + |1111\rangle
% 0001010111010101
% "0001010111010101 |0010> + |1100> + |1001> + |0110> + |0101> + |1110> + |1011> + |1111>";
%[sum32@cyberstar ex13]$ Singular ex_g4_cg2.sig > out_ex_g4_cg2.sig 
%/gpfs/home/sum32/work/maple_code/Pfaffian/sig_files1/cnot_swap/ex13
%job_g4_cg0_het_eval_0001010111010101.sig.job
%job_g4_cg0_het_eval_0001010111010101.sig.job.o2965161
%[sum32@cyberstar g4_cg0_het]$ pwd
%/gpfs/home/sum32/work/maple_code/Pfaffian/sig_files1/g4_cg0_het
\end{align*}
has an ideal that contains 19 polynomials of degrees ranging from 5 to 2, and the Gr\"obner basis algorithm fails to terminate after 96 hours of computation. However,  the Gr\"obner basis for the cogate $\langle 01|$ contains polynomials of degree one, and when this gate is paired with this cogate (which turns this pair into a 2-arity \textsc{equal} gate $|00\rangle + |11\rangle$), the computation terminates within seconds, indicating that this pair is \emph{not} Pfaffian. Thus, it may be much faster to check the Pfaffian property of a decomposition, then it would be to determine if a higher arity gate is Pfaffian.

We conclude this section by commenting on an interesting application of decomposable circuits. The problem of multiplying two integers together is often solved by a planar combinatorial multiplier. It is very easy to convert this multiplier to a planar tensor contraction network, and thus the question of factoring an $n$-bit integer is equivalent to solving $n$ \#CSP problems. However, the arity of the gate/cogate representing the ``adder" or ``half-adder" is eight, and thus the computation to determine if the gate/cogate is Pfaffian is not tractable with current Gr\"obner bases algorithms. However, it is very easy to ``guess and check" low-degree decompositions of this 8-arity gate/cogate. Since every new possibility for a polynomial-time factoring algorithm must be explored, Pfaffian decompositions of planar combinatorial multipliers must be an area of future work.  

%%%%%%%%%%%%%%%%%%%%%%%%%%%%%%%%%%%%%%%%%%%%%%%%%%
\section{Conclusion} In this paper, we modeled Pfaffian gates/cogates as systems of polynomial equations, and applied algebraic computational methods to identify classes of 0/1 \#CSP problems that are solvable in polynomial-time. We discussed two different methods of modeling 0/1 variables, and identified one 1-arity gate, three 2-arity gates, 15 3-arity gates and 117 4-arity gates (and similar cogates) that describe the available types of constraints. We also open the possibility of 24 extra gates and cogates that are possible under a heterogeneous change of basis only, if we postulate the existence of a third matrix $C$ and a ``bridge" between $C$ and the previously used matrices $A$ and $B$. We also demonstrated that the gates/cogates yielded differing set of constraints in this scenario. We additionally discussed the barrier of planarity in terms of Pfaffian circuits, and constructed a partial combinatorial structure of gates/cogates that models a Pfaffian \textsc{swap} gate/cogate. We highlight again 
that this structure is not yet complete. However, we have identified a specific direction in algebraic computation for simulating a \textsc{swap} gate/cogate in polynomial-time. We also introduce the notion of a \emph{decomposable gate/cogate} and discuss the benefits of using decompositions, as opposed to higher-arity gates, in computation.

This project has indicated a wealth of questions for exploration, and we summarize a few of the more compelling questions here for future work.
\begin{enumerate}
	\item If a given $n$-arity gate is Pfaffian, is the corresponding $n$-arity cogate also Pfaffian (under some heterogeneous change of basis), regardless whether $n$ is even or odd?
	\item If a given gate/cogate is Pfaffian, is there always a lower-arity decomposition?
	\item If a given gate/cogate is \emph{not} Pfaffian (such as \textsc{swap}), does there exist a decomposition that is Pfaffian?
	\item What is the importance of gates/cogates that are invariant under complement? How does such a combinatorial property factor into the development of a dichotomy theory for Pfaffian circuits?
	\item Can we develop theoretical proofs in place of computational ones?
	%\item XXX more stuff about decomposable gates?
	%\item XXX stuff about factoring
        %\item So far, heterogeneous change of basis does not seem to be more powerful than homogeneous; can this be proved?
\end{enumerate}

%%%%%%%%%%%%%%%%%%%%%%%%%%%%%%%%%%
\section*{Acknowledgments}
The authors would like to acknowledge the support of the Defense Advanced Research Projects Agency under Award No. N66001-10-1-4040.  We would also like to thank Tyson Williams and William Whistler for helpful comments.

%%%%%%%%%%%%%%%%%%%%%%%%%%%%%%%%%%%%%%%%%%%%%%%%%%
%\nocite{*}
\bibliographystyle{plain}
\bibliography{quantum}

%%%%%%%%%%%%%%%%%%%%%%%%%%%%%%%%%%%%%%%%%%%%%%%%%%
\appendix

%%%%%%%%%%%%%%%%%%%%%%%%%%%%%%%%%%%%%%%%%%%%%%%%%
%\newpage
%\newgeometry{top=.1cm}
%\begin{landscape}
\section{Listing the 117 4-arity gates of Thm. \ref{thm_bt_pair_hom}} \label{app_thm_bt_pair_hom_4}
%Here are the 117 4-arity gates from Thm. \ref{thm_bt_pair_hom}:
\vspace{-25pt}
%\footnotesize
\scriptsize
\begin{align*}
|0000\rangle + |1111\rangle~, \quad
%1000000000000001
|1000\rangle + |0111\rangle~, &\quad
%0100000000000010
|0100\rangle + |1011\rangle~,  \quad
%0010000000000100
|0010\rangle + |1101\rangle~, \\
%0001000000001000
|0001\rangle + |1110\rangle~, \quad 
%0000100000010000
|1100\rangle + |0011\rangle~, &\quad
%0000010000100000
|1010\rangle + |0101\rangle~, \quad
%0000001001000000
|1001\rangle + |0110\rangle~,
%0000000110000000
\end{align*}
\vspace{-25pt}
%%%%%%%%%%%%%%%%%%%%%%%%%%%%
\begin{align*}
|0000\rangle + |0010\rangle + |1101\rangle + |1111\rangle~, \quad
%1001000000001001
|0000\rangle + |0001\rangle + |1110\rangle + |1111\rangle~, & \quad
%1000100000010001
|0000\rangle + |1100\rangle + |0011\rangle + |1111\rangle~, \\
%1000010000100001
|0000\rangle + |1001\rangle + |0110\rangle + |1111\rangle~, \quad
%1000000110000001
|1000\rangle + |0100\rangle + |1011\rangle + |0111\rangle~, &\quad
%0110000000000110
|1000\rangle + |0001\rangle + |1110\rangle + |0111\rangle~, \\
%0100100000010010
|1000\rangle + |1100\rangle + |0011\rangle + |0111\rangle~, \quad
%0100010000100010
|1000\rangle + |1010\rangle + |0101\rangle + |0111\rangle~, & \quad
%0100001001000010
|1000\rangle + |1001\rangle + |0110\rangle + |0111\rangle~, \\
%0100000110000010
|0100\rangle + |0010\rangle + |1101\rangle + |1011\rangle~, \quad
%0011000000001100
|0100\rangle + |1100\rangle + |0011\rangle + |1011\rangle~, &\quad
%0010010000100100
|0100\rangle + |1010\rangle + |0101\rangle + |1011\rangle~, \\
%0010001001000100
|0100\rangle + |1001\rangle + |0110\rangle + |1011\rangle~, \quad
%0010000110000100
|0010\rangle + |0001\rangle + |1110\rangle + |1101\rangle~, &\quad
%0001100000011000
|0010\rangle + |1100\rangle + |0011\rangle + |1101\rangle~, \\
%0001010000101000
|0010\rangle + |1010\rangle + |0101\rangle + |1101\rangle~,  \quad
%0001001001001000
|0010\rangle + |1001\rangle + |0110\rangle + |1101\rangle~, &\quad
%0001000110001000
|0001\rangle + |1100\rangle + |0011\rangle + |1110\rangle~, \\
%0000110000110000
|0001\rangle + |1010\rangle + |0101\rangle + |1110\rangle~, \quad
%0000101001010000
|0001\rangle + |1001\rangle + |0110\rangle + |1110\rangle~,  & \quad
%0000100110010000
|1100\rangle + |1010\rangle + |0101\rangle + |0011\rangle~, \\
%0000011001100000
|1010\rangle + |1001\rangle + |0110\rangle + |0101\rangle~, \quad
%0000001111000000
|0000\rangle + |1000\rangle + |0111\rangle + |1111\rangle~, & \quad
%1100000000000011
|0000\rangle + |0100\rangle + |1011\rangle + |1111\rangle~,
%1010000000000101
\end{align*}
\vspace{-25pt}
\begin{align*}
|0000\rangle + |1000\rangle + |0001\rangle + |1110\rangle + |0111\rangle + |1111\rangle~, &\quad
%1100100000010011
|0000\rangle + |1000\rangle + |1100\rangle + |0011\rangle + |0111\rangle + |1111\rangle~, \\
%1100010000100011
|0000\rangle + |1000\rangle + |1001\rangle + |0110\rangle + |0111\rangle + |1111\rangle~, &\quad
%1100000110000011
|0000\rangle + |0100\rangle + |0010\rangle + |1101\rangle + |1011\rangle + |1111\rangle~, \\
%1011000000001101
|0000\rangle + |0100\rangle + |1100\rangle + |0011\rangle + |1011\rangle + |1111\rangle~, &\quad
%1010010000100101
|0000\rangle + |0100\rangle + |1001\rangle + |0110\rangle + |1011\rangle + |1111\rangle~, \\
%1010000110000101
|0000\rangle + |0010\rangle + |0001\rangle + |1110\rangle + |1101\rangle + |1111\rangle~, &\quad
%1001100000011001
|0000\rangle + |0010\rangle + |1100\rangle + |0011\rangle + |1101\rangle + |1111\rangle~, \\
%1001010000101001
|0000\rangle + |0010\rangle + |1001\rangle + |0110\rangle + |1101\rangle + |1111\rangle~, &\quad
%1001000110001001
|0000\rangle + |0001\rangle + |1100\rangle + |0011\rangle + |1110\rangle + |1111\rangle~, \\
%1000110000110001
|0000\rangle + |0001\rangle + |1001\rangle + |0110\rangle + |1110\rangle + |1111\rangle~, &\quad
%1000100110010001
|1000\rangle + |0100\rangle + |1100\rangle + |0011\rangle + |1011\rangle + |0111\rangle~, \\
%0110010000100110
|1000\rangle + |0100\rangle + |1010\rangle + |0101\rangle + |1011\rangle + |0111\rangle~, &\quad
%0110001001000110
|1000\rangle + |0100\rangle + |1001\rangle + |0110\rangle + |1011\rangle + |0111\rangle~, \\
%0110000110000110
|1000\rangle + |0001\rangle + |1100\rangle + |0011\rangle + |1110\rangle + |0111\rangle~, &\quad
%0100110000110010
|1000\rangle + |0001\rangle + |1010\rangle + |0101\rangle + |1110\rangle + |0111\rangle~, \\
%0100101001010010
|1000\rangle + |0001\rangle + |1001\rangle + |0110\rangle + |1110\rangle + |0111\rangle~, &\quad
%0100100110010010
|1000\rangle + |1100\rangle + |1010\rangle + |0101\rangle + |0011\rangle + |0111\rangle~, \\
%0100011001100010
|1000\rangle + |1010\rangle + |1001\rangle + |0110\rangle + |0101\rangle + |0111\rangle~, &\quad
%0100001111000010
|0100\rangle + |0010\rangle + |1100\rangle + |0011\rangle + |1101\rangle + |1011\rangle~, \\
%0011010000101100
|0100\rangle + |0010\rangle + |1010\rangle + |0101\rangle + |1101\rangle + |1011\rangle~, &\quad
%0011001001001100
|0100\rangle + |0010\rangle + |1001\rangle + |0110\rangle + |1101\rangle + |1011\rangle~, \\
%0011000110001100
|0100\rangle + |1100\rangle + |1010\rangle + |0101\rangle + |0011\rangle + |1011\rangle~, &\quad
%0010011001100100
|0100\rangle + |1010\rangle + |1001\rangle + |0110\rangle + |0101\rangle + |1011\rangle~, \\
%0010001111000100
|0010\rangle + |0001\rangle + |1100\rangle + |0011\rangle + |1110\rangle + |1101\rangle~, &\quad
%0001110000111000
|0010\rangle + |0001\rangle + |1010\rangle + |0101\rangle + |1110\rangle + |1101\rangle~, \\
%0001101001011000
|0010\rangle + |0001\rangle + |1001\rangle + |0110\rangle + |1110\rangle + |1101\rangle~, &\quad
%0001100110011000
|0010\rangle + |1100\rangle + |1010\rangle + |0101\rangle + |0011\rangle + |1101\rangle~, \\
%0001011001101000
|0010\rangle + |1010\rangle + |1001\rangle + |0110\rangle + |0101\rangle + |1101\rangle~, &\quad
%0001001111001000
|0001\rangle + |1100\rangle + |1010\rangle + |0101\rangle + |0011\rangle + |1110\rangle~, \\
%0000111001110000
|0001\rangle + |1010\rangle + |1001\rangle + |0110\rangle + |0101\rangle + |1110\rangle~, & \quad
%0000101111010000
|0000\rangle + |1000\rangle + |0100\rangle + |1011\rangle + |0111\rangle + |1111\rangle~,
%1110000000000111
\end{align*}
\vspace{-25pt}
%%%%%%%%%%%%%%%%%%%%%%%%%%%% 8 kets
\begin{align*}
|0000\rangle + |1000\rangle + |0100\rangle + |1001\rangle + |0110\rangle + |1011\rangle + |0111\rangle + |1111\rangle~, \quad 
%1110000110000111
|0000\rangle + |1000\rangle + |0010\rangle + |1010\rangle + \\ |0101\rangle + |1101\rangle + |0111\rangle + |1111\rangle~, \quad
%1101001001001011
|0000\rangle + |1000\rangle + |0001\rangle + |1100\rangle + |0011\rangle + |1110\rangle + |0111\rangle + |1111\rangle~, \\
%1100110000110011
|0000\rangle + |1000\rangle + |0001\rangle + |1001\rangle + |0110\rangle + |1110\rangle + |0111\rangle + |1111\rangle~,  \quad
%1100100110010011
|0000\rangle + |0100\rangle + |0010\rangle + |1100\rangle + \\ |0011\rangle + |1101\rangle + |1011\rangle + |1111\rangle~, \quad
%1011010000101101
|0000\rangle + |0100\rangle + |0010\rangle + |1001\rangle + |0110\rangle + |1101\rangle + |1011\rangle + |1111\rangle~,  \\
%1011000110001101
|0000\rangle + |0100\rangle + |0001\rangle + |1010\rangle + |0101\rangle + |1110\rangle + |1011\rangle + |1111\rangle~, \quad
%1010101001010101
|0000\rangle + |0010\rangle + |0001\rangle + |1100\rangle + \\ |0011\rangle + |1110\rangle + |1101\rangle + |1111\rangle~,  \quad
%1001110000111001
|0000\rangle + |0010\rangle + |0001\rangle + |1001\rangle + |0110\rangle + |1110\rangle + |1101\rangle + |1111\rangle~, \\
%1001100110011001
\end{align*}
%%%%%%%%%%%%%%%%%%%%%%%%%%%% 8 kets
\begin{align*}
|1000\rangle + |0100\rangle + |1100\rangle + |1010\rangle + |0101\rangle + |0011\rangle + |1011\rangle + |0111\rangle~,  \quad
%0110011001100110
|1000\rangle + |0100\rangle + |1010\rangle + |1001\rangle + \\ |0110\rangle + |0101\rangle + |1011\rangle + |0111\rangle~, \quad
%0110001111000110
|1000\rangle + |0010\rangle + |1100\rangle + |1001\rangle + |0110\rangle + |0011\rangle + |1101\rangle + |0111\rangle~,  \\
%0101010110101010
|1000\rangle + |0001\rangle + |1100\rangle + |1010\rangle + |0101\rangle + |0011\rangle + |1110\rangle + |0111\rangle~, \quad
%0100111001110010
|1000\rangle + |0001\rangle + |1010\rangle + |1001\rangle + \\ |0110\rangle + |0101\rangle + |1110\rangle + |0111\rangle~,  \quad
%0100101111010010
|0100\rangle + |0010\rangle + |1100\rangle + |1010\rangle + |0101\rangle + |0011\rangle + |1101\rangle + |1011\rangle~, \\
%0011011001101100
|0000\rangle + |1000\rangle + |0100\rangle + |1100\rangle + |0011\rangle + |1011\rangle + |0111\rangle + |1111\rangle~, \quad
%1110010000100111
|0100\rangle + |0010\rangle + |1010\rangle + |1001\rangle + \\|0110\rangle + |0101\rangle + |1101\rangle + |1011\rangle~,  \quad
%0011001111001100
|0100\rangle + |0001\rangle + |1100\rangle + |1001\rangle + |0110\rangle + |0011\rangle + |1110\rangle + |1011\rangle~, \\
%0010110110110100
|0010\rangle + |0001\rangle + |1100\rangle + |1010\rangle + |0101\rangle + |0011\rangle + |1110\rangle + |1101\rangle~,  \\
%0001111001111000
|0010\rangle + |0001\rangle + |1010\rangle + |1001\rangle + |0110\rangle + |0101\rangle + |1110\rangle + |1101\rangle~,
%0001101111011000
\end{align*}
\vspace{-20pt}
%%%%%%%%%%%%%%%%%%%%%%%%%%%%
\begin{align*}
|0000\rangle + |1000\rangle + |0100\rangle + |0010\rangle + |1010\rangle + |0101\rangle + |1101\rangle + |1011\rangle + |0111\rangle + |1111\rangle~, \\
%1111001001001111
|0000\rangle + |1000\rangle + |0100\rangle + |0001\rangle + |1010\rangle + |0101\rangle + |1110\rangle + |1011\rangle + |0111\rangle + |1111\rangle~, \\
%1110101001010111
|0000\rangle + |1000\rangle + |0010\rangle + |0001\rangle + |1010\rangle + |0101\rangle + |1110\rangle + |1101\rangle + |0111\rangle + |1111\rangle~, \\
%1101101001011011
|0000\rangle + |1000\rangle + |0010\rangle + |1100\rangle + |1010\rangle + |0101\rangle + |0011\rangle + |1101\rangle + |0111\rangle + |1111\rangle~, \\
%1101011001101011
|0000\rangle + |1000\rangle + |0010\rangle + |1100\rangle + |1001\rangle + |0110\rangle + |0011\rangle + |1101\rangle + |0111\rangle + |1111\rangle~, \\
%1101010110101011
|0000\rangle + |1000\rangle + |0010\rangle + |1010\rangle + |1001\rangle + |0110\rangle + |0101\rangle + |1101\rangle + |0111\rangle + |1111\rangle~, \\
%1101001111001011
|0000\rangle + |0100\rangle + |0010\rangle + |0001\rangle + |1010\rangle + |0101\rangle + |1110\rangle + |1101\rangle + |1011\rangle + |1111\rangle~, \\
%1011101001011101
|0000\rangle + |0100\rangle + |0001\rangle + |1100\rangle + |1010\rangle + |0101\rangle + |0011\rangle + |1110\rangle + |1011\rangle + |1111\rangle~, \\
%1010111001110101
|0000\rangle + |0100\rangle + |0001\rangle + |1100\rangle + |1001\rangle + |0110\rangle + |0011\rangle + |1110\rangle + |1011\rangle + |1111\rangle~, \\
%1010110110110101
|0000\rangle + |0100\rangle + |0001\rangle + |1010\rangle + |1001\rangle + |0110\rangle + |0101\rangle + |1110\rangle + |1011\rangle + |1111\rangle~, \\
%1010101111010101
|1000\rangle + |0100\rangle + |0010\rangle + |1100\rangle + |1001\rangle + |0110\rangle + |0011\rangle + |1101\rangle + |1011\rangle + |0111\rangle~, \\
%0111010110101110
|1000\rangle + |0100\rangle + |0001\rangle + |1100\rangle + |1001\rangle + |0110\rangle + |0011\rangle + |1110\rangle + |1011\rangle + |0111\rangle~, \\
%0110110110110110
|1000\rangle + |0010\rangle + |0001\rangle + |1100\rangle + |1001\rangle + |0110\rangle + |0011\rangle + |1110\rangle + |1101\rangle + |0111\rangle~, \\
%0101110110111010
|1000\rangle + |0010\rangle + |1100\rangle + |1010\rangle + |1001\rangle + |0110\rangle + |0101\rangle + |0011\rangle + |1101\rangle + |0111\rangle~, \\
%0101011111101010
|0100\rangle + |0010\rangle + |0001\rangle + |1100\rangle + |1001\rangle + |0110\rangle + |0011\rangle + |1110\rangle + |1101\rangle + |1011\rangle~, \\
%0011110110111100
|0100\rangle + |0001\rangle + |1100\rangle + |1010\rangle + |1001\rangle + |0110\rangle + |0101\rangle + |0011\rangle + |1110\rangle + |1011\rangle~, \\
%0010111111110100
%
%
|0000\rangle + |1000\rangle + |0100\rangle + |0010\rangle + |1100\rangle + |1010\rangle 
+|0101\rangle + |0011\rangle + |1101\rangle + |1011\rangle + |0111\rangle + |1111\rangle~, \\
%1111011001101111
|0000\rangle + |1000\rangle + |0100\rangle + |0010\rangle + |1100\rangle + |1001\rangle 
+ |0110\rangle + |0011\rangle + |1101\rangle + |1011\rangle + |0111\rangle + |1111\rangle~, \\
%1111010110101111
|0000\rangle + |1000\rangle + |0100\rangle + |0010\rangle + |1010\rangle + |1001\rangle 
+ |0110\rangle + |0101\rangle + |1101\rangle +|1011\rangle + |0111\rangle + |1111\rangle~, \\
%1111001111001111
|0000\rangle + |1000\rangle + |0100\rangle + |0001\rangle + |1100\rangle + |1010\rangle 
+ |0101\rangle + |0011\rangle + |1110\rangle +|1011\rangle + |0111\rangle + |1111\rangle~, \\
%1110111001110111
|0000\rangle + |1000\rangle + |0100\rangle + |0001\rangle + |1100\rangle + |1001\rangle 
+ |0110\rangle + |0011\rangle + |1110\rangle +|1011\rangle + |0111\rangle + |1111\rangle~, \\
%1110110110110111
|0000\rangle + |1000\rangle + |0100\rangle + |0001\rangle + |1010\rangle + |1001\rangle 
+ |0110\rangle + |0101\rangle + |1110\rangle + |1011\rangle + |0111\rangle + |1111\rangle~, \\
%1110101111010111
|0000\rangle + |1000\rangle + |0010\rangle + |0001\rangle + |1100\rangle + |1010\rangle 
+ |0101\rangle + |0011\rangle + |1110\rangle + |1101\rangle + |0111\rangle + |1111\rangle~, \\
%1101111001111011
|0000\rangle + |1000\rangle + |0010\rangle + |0001\rangle + |1100\rangle + |1001\rangle 
+ |0110\rangle + |0011\rangle + |1110\rangle + |1101\rangle + |0111\rangle + |1111\rangle~, \\
%1101110110111011
|0000\rangle + |1000\rangle + |0010\rangle + |0001\rangle + |1010\rangle + |1001\rangle 
+ |0110\rangle + |0101\rangle + |1110\rangle + |1101\rangle + |0111\rangle + |1111\rangle~, \\
%1101101111011011
|0000\rangle + |0100\rangle + |0010\rangle + |0001\rangle + |1100\rangle + |1010\rangle 
+|0101\rangle + |0011\rangle + |1110\rangle + |1101\rangle + |1011\rangle + |1111\rangle~, \\
%1011111001111101
|0000\rangle + |0100\rangle + |0010\rangle + |0001\rangle + |1100\rangle + |1001\rangle 
+ |0110\rangle + |0011\rangle + |1110\rangle + |1101\rangle + |1011\rangle + |1111\rangle~, \\
%1011110110111101
|0000\rangle + |0100\rangle + |0010\rangle + |0001\rangle + |1010\rangle + |1001\rangle 
+ |0110\rangle + |0101\rangle + |1110\rangle + |1101\rangle + |1011\rangle + |1111\rangle~, \\
%1011101111011101
|1000\rangle + |0100\rangle + |0010\rangle + |1100\rangle + |1010\rangle + |1001\rangle 
+ |0110\rangle + |0101\rangle + |0011\rangle + |1101\rangle + |1011\rangle + |0111\rangle~, \\
%0111011111101110
|1000\rangle + |0100\rangle + |0001\rangle + |1100\rangle + |1010\rangle + |1001\rangle 
+ |0110\rangle + |0101\rangle + |0011\rangle + |1110\rangle + |1011\rangle + |0111\rangle~, \\
%0110111111110110
|1000\rangle + |0010\rangle + |0001\rangle + |1100\rangle + |1010\rangle + |1001\rangle 
+ |0110\rangle + |0101\rangle + |0011\rangle + |1110\rangle + |1101\rangle + |0111\rangle~, \\
%0101111111111010
|0100\rangle + |0010\rangle + |0001\rangle + |1100\rangle + |1010\rangle + |1001\rangle 
+ |0110\rangle + |0101\rangle + |0011\rangle + |1110\rangle + |1101\rangle + |1011\rangle~, \\
%0011111111111100
|0000\rangle + |1000\rangle + |0100\rangle + |0010\rangle + |0001\rangle + |1100\rangle + |1010\rangle + |1001\rangle + \\
|0110\rangle + |0101\rangle + |0011\rangle + |1110\rangle + |1101\rangle + |1011\rangle + |0111\rangle + |1111\rangle~.
%1111111111111111
\end{align*}
\normalsize
%\end{landscape}
%\restoregeometry

\end{document}